\newcommand{\HC}{{\textrm{HC}}}
\newcommand{\css}{{\textrm{css}}}
\newcommand{\I}{{\mathcal{I}}}
\newcommand{\J}{{\mathcal{J}}}
\theoremstyle{definition}
\newtheorem{theorem}{Theorem}
\newtheorem{corollary}[theorem]{Corollary}
\newtheorem{lemma}[theorem]{Lemma}
\newtheorem{definition}[theorem]{Definition}
\newtheorem{remark}[theorem]{Remark}
\def\blfootnote{\xdef\@thefnmark{}\@footnotetext}
\begin{document}

\blfootnote{2010 \emph{Mathematics Subject Classification:} 03C15, 03C55, 20K20. Key words and phrases: Borel complexity, torsion-free abelian groups}

\author{Saharon Shelah\!\! \thanks{
This material is based in part upon work supported by the National
Science Foundation under Grant No. 136974 and  by European Research Council grant 338821. Paper 1141 on Shelah's list.}\\
Douglas Ulrich \!\!\
	\thanks{Partially supported
by Laskowski's NSF grant DMS-1308546.}}
	\title{Torsion-Free Abelian Groups are Consistently $a \Delta^1_2$-complete}
	\date{\today} 
%
%
	\maketitle
	
	\begin{abstract}
	Let $\mbox{TFAG}$ be the theory of torsion-free abelian groups. We show that if there is no countable transitive model of $ZFC^- + \kappa(\omega)$ exists, then $\mbox{TFAG}$ is $a \Delta^1_2$-complete; in particular, this is consistent with $ZFC$. We define the $\alpha$-ary Schr\"{o}der- Bernstein property, and show that $\mbox{TFAG}$ fails the $\alpha$-ary Schr\"{o}der-Bernstein property for every $\alpha < \kappa(\omega)$. We leave open whether or not $\mbox{TFAG}$ can have the $\kappa(\omega)$-ary Schr\"{o}der-Bernstein property; if it did, then it would not be $a \Delta^1_2$-complete, and hence not Borel complete.
	\end{abstract}
	\section{Introduction}

In their seminal paper \cite{FS}, Friedman and Stanley introduced \emph{Borel complexity}, a measure of the complexity of the class of countable models of a sentence $\Phi \in \mathcal{L}_{\omega_1 \omega}$. Let $\mbox{Mod}(\Phi)$ be the set of all countable models of $\Phi$ with universe $\mathbb{N}$ (or any other fixed countable set). Then $\mbox{Mod}(\Phi)$ can be made into a standard Borel space in a natural way. 

\begin{definition}
Suppose $\Phi$, $\Psi$ are sentences of $\mathcal{L}_{\omega_1 \omega}$. Then say that $\Phi \leq_B \Psi$ ($\Phi$ is Borel reducible to $\Psi$) if there is a Borel-measurable function $f: \mbox{Mod}(\Phi) \to \mbox{Mod}(\Psi)$ satisfying the following: for all $M_1, M_2 \in \mbox{Mod}(\Phi)$, $M_1 \cong M_2$ if and only if $f(M_1) \cong f(M_2)$. 

Say that $\Phi \sim_B \Psi$ ($\Phi$ and $\Psi$ are Borel bi-reducible) if $\Phi \leq_B \Psi$ and $\Psi \leq_B \Phi$.
\end{definition}

 One way to think about the definition of $\leq_B$ is that $f$ induces an injection from $\mbox{Mod}(\Phi)/\cong$ to $\mbox{Mod}(\Psi)/\cong$; in other words, we are comparing the \emph{Borel cardinality} of $\mbox{Mod}(\Phi)/\cong$ and $\mbox{Mod}(\Psi)/\cong$.

In \cite{FS}, Friedman and Stanley showed that there is a maximal class of sentences under $\leq_B$, namely the \emph{Borel complete sentences}. For example, the theories of graphs, groups, rings, linear orders, and trees are all Borel complete. This provides a way to answer the question ``Is it possible to classify the countable models of $\Phi$" negatively in a precise sense: if $\Phi$ is Borel complete, then classifying the countable models of $\Phi$ is as hard as classifying arbitrary countable structures. 

In \cite{FS}, Friedman and Stanley leverage the Ulm analysis \cite{Ulm} to show that torsion abelian groups are far from Borel complete. They then pose the following question:

\vspace{2 mm}

\noindent \textbf{Question.} Let $\mbox{TFAG}$ be the theory of torsion-free abelian groups. Is $\mbox{TFAG}$ Borel complete?

\vspace{2 mm}

This has attracted considerable attention, but has nonetheless remained open. The following theorem of Hjorth \cite{Hjorth1} is the best known so far, where $(\Phi_\alpha: \alpha < \omega_1)$ is the Friedman-Stanley tower:

\begin{theorem}\label{HjorthFirst} 
$\Phi_\alpha \leq_B \mbox{TFAG}$ for every $\alpha < \omega_1$.
\end{theorem}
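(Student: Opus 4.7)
The natural approach is transfinite induction on $\alpha < \omega_1$, constructing Borel reductions $f_\alpha : \mbox{Mod}(\Phi_\alpha) \to \mbox{Mod}(\mbox{TFAG})$. For the base case, I would invoke Baer's classical theorem that rank-one torsion-free abelian groups are classified up to isomorphism by their \emph{type}, a function from the primes into $\omega \cup \{\infty\}$ considered modulo finite modifications. This provides an easy Borel encoding of equality on $2^\omega$ (equivalently, of $\Phi_0$) into isomorphism of rank-one subgroups of $\mathbb{Q}$: fix a bijection between $\omega$ and the primes, and send $x \in 2^\omega$ to the rank-one group whose type at $p_n$ is $\infty$ if $x(n) = 1$ and $0$ otherwise.

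For the successor step, suppose $f_\alpha$ has been constructed. A model of $\Phi_{\alpha+1}$ is essentially a countable sequence $\bar M = (M_n)_{n<\omega}$ of models of $\Phi_\alpha$, with $\bar M \cong \bar N$ iff the isomorphism-class sets $\{[M_n]_\cong : n < \omega\}$ and $\{[N_n]_\cong : n < \omega\}$ coincide as sets. I would define $f_{\alpha+1}(\bar M)$ by amalgamating copies of $f_\alpha(M_n)$ inside a single torsion-free abelian group, using disjoint infinite blocks of primes $P_n$ as ``tags'' attached through controlled divisibility, while arranging the construction to be symmetric in $n$ so that permuting or duplicating entries of $\bar M$ yields isomorphic groups (so the invariant is set-valued, not multiset-valued). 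The limit case I would handle by a tagged direct sum along a fixed cofinal sequence in $\lambda$, combining the previously constructed $f_\beta$'s with marker primes distinguishing the components.

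The main obstacle is by far the successor step, specifically showing that $f_{\alpha+1}$ reflects non-isomorphism: if $\{[M_n]_\cong\} \neq \{[N_n]_\cong\}$ then the amalgamated groups must be non-isomorphic. This reduces to a canonicity theorem for the ``block decomposition'' of the constructed groups, asserting that every isomorphism between two such amalgams must carry tagged pieces to tagged pieces while preserving their $f_\alpha$-isomorphism type. Designing the tagging so that this canonicity genuinely holds --- ruling out exotic isomorphisms that mix divisibility across tags or degrade the encoded type data --- is where I expect the real work to lie; it likely requires a careful analysis of the divisibility-type spectrum of the constructed groups, together with some structural input about torsion-free (or $\aleph_1$-free) abelian groups to ensure that the tags can be reconstructed intrinsically from the group. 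A secondary concern is keeping the coding uniformly Borel in $\alpha$ so that the inductive definition actually terminates in a single Borel function for each $\alpha < \omega_1$, but this is routine once the combinatorial core is in place.
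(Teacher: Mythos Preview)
The paper cites this as Hjorth's theorem and does not give a self-contained proof, but Sections~\ref{frames} and~\ref{refinement} reproduce the core of Hjorth's argument, and that argument is structurally quite different from yours. There is no transfinite induction on $\alpha$. Instead one builds a \emph{single} Borel map from colored trees into $\mbox{TFAG}$: a tree $\mathcal{T}$ is first sent to the structure $\mathcal{T}\otimes\mathbb{Z}=(\oplus_T\mathbb{Z},\,G_{\mathcal{T},n,i},\,\pi_\mathcal{T})\models\Omega^p_{\omega\times\omega,\{0\}}$, where $\pi_\mathcal{T}$ is the ``parent'' homomorphism and the subgroups $G_{\mathcal{T},n,i}$ record height and color; this is then pushed into $\mbox{TFAG}$ by the $p$-adic tagging of Lemma~\ref{HjorthThm}, which uses a single prime $p$ together with an algebraically independent sequence in $\mathbb{Z}_p$ rather than disjoint blocks of primes. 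The recovery argument (Claims~1 and~2 in the proof of Theorem~\ref{ZTreesLemma}) shows that from the resulting group one can read off the biembeddability classes $\mathcal{T}_{\geq t}/{\sim}$ for every node $t$; restricting to well-founded trees of bounded rank then handles each $\Phi_\alpha$ at once. The point is that the parent map $\pi_\mathcal{T}$, once encoded, supplies the rigidity needed for recovery, and no inductive amalgamation step is ever performed.

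Your inductive scheme, by contrast, has a genuine gap at the successor step that I do not see how to close with the ideas you sketch. You want to tag the $n$-th component with a prime block $P_n$ and at the same time make the construction ``symmetric in $n$'' so that only the \emph{set} $\{[M_n]_\cong\}$ is recorded; these two demands pull against each other. Distinct $P_n$'s destroy the symmetry (permuting the $M_n$ now visibly changes the divisibility profile), while identical tagging leaves you with what is essentially a bare direct sum $\bigoplus_n f_\alpha(M_n)^{(\omega)}$, and here the ``canonicity of the block decomposition'' you hope for is precisely what fails: torsion-free abelian groups have no Krull--Schmidt theorem, and the classical pathological decompositions (Corner, J\'onsson, and others) show that one cannot in general recover summands from a direct sum. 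The structural input you gesture at (type spectra, $\aleph_1$-freeness) does not by itself repair this, and indeed Hjorth's insight was to abandon the direct-sum picture in favor of a tree-shaped encoding whose internal homomorphism carries the decomposition data explicitly.
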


This means that if $\mbox{TFAG}$ is not Borel complete, then it represents a very new phenomenon. In fact, in \cite{FS}, Friedman and Stanley separately described the following question as one of the basic open problems of the general theory: if $\Phi$ is a sentence of $\mathcal{L}_{\omega_1 \omega}$ and if $\Phi_\alpha \leq_B \Phi$ for each $\alpha < \omega_1$, must $\Phi$ be Borel complete?

In Section~\ref{frames}, we give a uniform treatment of the main currently known techniques of coding information into abelian groups. The basic idea for these codings is old, dating at least to \cite{Hjorth1} and \cite{ShelahEkloff}; namely, we start with a free abelian group, and then tag various subgroups by making the elements infinitely divisible by particular primes. However, to make the coding more robust we adopt an idea of \cite{Goodrick}, replacing the use of primes by an algebraically independent sequence of $p$-adic integers for a fixed prime $p$. As a first application, we show the following, where $\mbox{AG}$ is the theory of abelian groups:

\begin{theorem}
$\mbox{TFAG} \sim_B \mbox{AG}$. Further, if $R$ is any countable ring, then $R\mbox{-mod}$, the theory of left $R$-modules, has $R\mbox{-mod} \leq_B \mbox{AG}$.
\end{theorem}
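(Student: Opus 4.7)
The direction $\mbox{TFAG} \leq_B \mbox{AG}$ is immediate: the identity on underlying sets is a Borel reduction from $\mbox{Mod}(\mbox{TFAG})$ into $\mbox{Mod}(\mbox{AG})$, since every torsion-free abelian group is already an abelian group. The real content of the theorem lies in the other two reductions, and my plan is to extract both from the uniform coding apparatus promised in Section~\ref{frames}.

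For $\mbox{AG} \leq_B \mbox{TFAG}$, fix a prime $p$ and an algebraically independent sequence $(\pi_i)_{i < \omega}$ of $p$-adic integers. Given a countable abelian group $A$, enumerate the pairs $\{(a_i, b_i) : i < \omega\}$ in $A$, and construct, in a Borel fashion, a torsion-free abelian group $B(A)$ as follows: begin with the free abelian group $F$ on generators $(x_a)_{a \in A}$, and then for each $i$ tag the additivity-failure element $e_i := x_{a_i} + x_{b_i} - x_{a_i + b_i}$ using $\pi_i$, in the precise sense to be made available by the framing machinery. The algebraic independence of the $\pi_i$ ensures that each tagged subgroup, together with the $p$-adic pattern used to mark it, is invariantly recoverable from $B(A)$ as an abstract torsion-free abelian group, so that $A$ can be reconstructed as a distinguished invariant quotient of $B(A)$; consequently $A \cong A'$ iff $B(A) \cong B(A')$.

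For $R\mbox{-mod} \leq_B \mbox{AG}$ the strategy is parallel but doubled. Given a countable left $R$-module $M$, one must simultaneously code its underlying abelian group structure and, for each $r \in R$, the endomorphism $m \mapsto rm$. Using the same framing apparatus (but now working in the ambient category of all abelian groups rather than torsion-free ones, which gives extra flexibility), the graph $\Gamma_r = \{(m, rm) : m \in M\}$ of multiplication by $r$ is tagged by a distinct element of the algebraically independent sequence; since $R$ is countable, countably many independent tags suffice, and they stay disjoint from the tags used to code the underlying addition. Any abstract abelian-group isomorphism between the resulting $C(M)$ and $C(M')$ must respect every tagged subgroup and therefore induces an $R$-module isomorphism $M \to M'$.

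The main obstacle in both reductions is proving that the tagged subgroups are genuinely recoverable from the naked (torsion-free) abelian group: one must rule out that some element outside an intended tagged subgroup accidentally inherits the $p$-adic divisibility pattern used to mark it, which would allow a spurious isomorphism to shuffle the tags and destroy the coding. This is exactly what the algebraic independence of the $\pi_i$ is designed to prevent, and verifying it rigorously (uniformly for all codings of the above form) is the central technical content of the frames machinery in Section~\ref{frames}; once that is in hand, both reductions follow by specializing the general framework to the two coding recipes described above.
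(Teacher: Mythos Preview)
Your sketch for $\mbox{AG} \leq_B \mbox{TFAG}$ does not give a reduction: the forward direction fails because $A \mapsto B(A)$ is not isomorphism-invariant. You fix an enumeration $(a_i, b_i)_{i<\omega}$ of pairs and tag the $i$-th relation $e_i = x_{a_i} + x_{b_i} - x_{a_i+b_i}$ with the specific $p$-adic integer $\pi_i$. If $\sigma: A \cong A'$, the induced map $x_a \mapsto x_{\sigma(a)}$ sends $e_i^A$ to $e_j^{A'}$, where $j$ is whatever index has $(a_j, b_j) = (\sigma(a_i), \sigma(b_i))$; that element carries tag $\pi_j$, not $\pi_i$. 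Since the whole purpose of algebraic independence is that tagged elements are recoverable \emph{with their labels}, in general no isomorphism $B(A) \cong B(A')$ exists. The same defect appears wherever your $R$-module coding tags enumeration-dependent kernel generators for the addition.

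The paper avoids this by never tagging enumeration-dependent data. It first codes $A$ as $(\oplus_A \mathbb{Z}, K)$ with $K$ the augmentation kernel---a single canonically attached subgroup---giving a model of $\Omega_{\{0\},0}$ (Lemma~\ref{BiredLemma1}). A short chain of reductions among the $\Omega_{\I,\J}$ (Lemmas~\ref{BiredLemma2}--\ref{BiredLemma4}) converts this to a model of $\Omega_{\omega,0}^p$: a free abelian group with countably many pure subgroups $G_n$, where the index $n$ is part of the structure rather than an artifact of an enumeration. Only then is the $p$-adic tagging applied (Lemma~\ref{HjorthThm}), with $G_n$ tagged by $\gamma_n$; isomorphism is preserved because the indexing is intrinsic. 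The $R$-module claim goes through the $\Omega^-$ variants in the same way, with the maps $m \mapsto rm$ serving as canonical data indexed by $r \in R$.
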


In Section~\ref{refinement}, we expand on Hjorth's proof of Theorem~\ref{HjorthFirst}. To state our results we need to introduce some more terminology.

\begin{definition}
By $ZFC^-$, we mean $ZFC$ without the power-set axiom, but where we strengthen replacement to collection and we strengthen choice to the well-ordering principle; this is as in \cite{ZFCminus}.

$\kappa(\omega)$ is the least cardinal $\kappa$ such that $\kappa \rightarrow (\omega)^{<\omega}_2$. This makes sense even in models of $ZFC^-$ (or less).

\end{definition}

\begin{definition}
Sometimes natural reductions that arise require transfinite recursion, and thus are not Borel. A coarser notion of reduciblity that allows for this is absolute $\Delta^1_2$ reducibility, denoted $a \Delta^1_2$. This notion has been studied, for instance, by Hjorth in Chapter 9 of \cite{Hjorth2}. Namely: suppose $\Phi, \Psi$ are sentences of $\mathcal{L}_{\omega_1 \omega}$. Then put $\Phi \leq_{a \Delta^1_2} \Psi$ if there is some function $f: \mbox{Mod}(\Phi) \to \mbox{Mod}(\Psi)$ with $\Delta^1_2$ graph, such that for all $M, N \in \mbox{Mod}(\Phi)$, $M \cong N$ if and only if $f(M) \cong f(N)$, and such that further, this continues to hold in any forcing extension. Explicitly, if $\sigma(x, y)$ is the $\Pi^1_2$ definition of the graph of $f$, and $\tau(x, y)$ is the $\Sigma^1_2$-definition of the graph of $f$, and if $\mathbb{V}[G]$ is a forcing extension, then $\sigma(x, y)$ and $\tau(x, y)$ coincide on $\mbox{Mod}(\Phi)^{\mathbb{V}[G]} \times \mbox{Mod}(\Psi)^{\mathbb{V}[G]}$ and define the graph of a function $f^{\mathbb{V}[G]}$, such that for all $M, N \in \mbox{Mod}(\Phi)^{\mathbb{V}[G]}$, $M \cong N$ if and only if $f^{\mathbb{V}[G]}(M) \cong f^{\mathbb{V}[G]}(N)$.

\end{definition}

%
%
%
%

	Using the basic idea of Theorem~\ref{HjorthFirst}, we are able to prove the following theorem in Section~\ref{refinement}:

	\begin{theorem}\label{EmbeddingWFFirst}
	Suppose there is no transitive model of $ZFC^- + \kappa(\omega) \mbox{ exists}$. Then $\mbox{Graphs} \leq_{a\Delta^1_2} \mbox{TFAG}$.
	\end{theorem}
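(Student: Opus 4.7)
The plan is to promote Hjorth's construction of Theorem~\ref{HjorthFirst}, which handles $\Phi_\alpha$ for a fixed $\alpha < \omega_1$, into a uniform assignment on arbitrary graphs by carrying out a transfinite recursion with no pre-fixed bound. First, I would view a graph $\Gamma$ on $\mathbb{N}$ as coding a canonical hereditary decomposition --- for instance by passing to the quotient of its rooted paths under bisimulation, yielding a possibly ill-founded pointed tree. The well-founded part has some rank $\mathrm{rk}(\Gamma)$, which need not be bounded by $\omega_1$ in general, and on well-founded $\Gamma$ isomorphism corresponds to equality of the decoded ranked hereditarily-countable set. The ill-founded part is bundled as a single ``top layer'' and handled uniformly, in the spirit of how the Friedman-Stanley tower is customarily extended to all of $\mathrm{Graphs}$.

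Second, using the uniform coding apparatus promised in Section~\ref{frames}, I would perform a Hjorth-style tagging by algebraically independent $p$-adic integers through this transfinite recursion: at each rank $\xi$ we adjoin fresh generators indexed by the rank-$\xi$ elements of the decomposition, tag them by divisibility conditions so as to encode the adjacency data at that layer, and pass to the direct limit. Call the resulting TFAG $A(\Gamma)$. By construction $\Gamma \cong \Gamma'$ implies $A(\Gamma) \cong A(\Gamma')$; the converse follows by recovering $\Gamma$ from $A(\Gamma)$ through a layer-by-layer reading of divisibility types, exactly generalising Hjorth's recovery at level $\alpha$.

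Third, I would verify the $a\Delta^1_2$ bound. The $\Sigma^1_2$ direction is routine: ``$A(\Gamma) = G$'' holds if there exists a real coding a valid transfinite recursion witnessing the construction. The $\Pi^1_2$ direction is the assertion that any valid such recursion yields an isomorphic group, and more importantly that the two definitions continue to agree and continue to define an isomorphism-respecting function in every forcing extension. This is where the hypothesis that no transitive model of $ZFC^- + \kappa(\omega)$ exists enters: if such a CTM $M$ existed, it could be used as an oracle inside a suitable generic extension to manufacture an unwanted isomorphism between $A(\Gamma)$ and $A(\Gamma')$ with $\Gamma \not\cong \Gamma'$, using the Erdős-Ramsey combinatorics at $\kappa(\omega)$ to produce enough indiscernibles within the group to scramble the tagging layers. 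Ruling out such an $M$ rules out that pathology.

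The main obstacle I expect is precisely this absoluteness argument: pinning down why $\kappa(\omega)$ is the exact Ramsey threshold at which the layerwise tagging by $p$-adic integers can be defeated by a rogue automorphism, and why the non-existence of a transitive model of $ZFC^- + \kappa(\omega)$ is enough to block such a rogue automorphism simultaneously in every generic extension. The construction itself, the isomorphism-preservation, and the $\Sigma^1_2$ half of the definability are relatively routine extensions of Hjorth's argument once the coding framework of Section~\ref{frames} is in hand; the genuine content of the theorem lies in the $\Pi^1_2$ half and its preservation under forcing, which is where the large-cardinal anti-hypothesis does all of the work.
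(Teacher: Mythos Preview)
Your proposal misidentifies both the architecture of the proof and, more seriously, the role of the large-cardinal anti-hypothesis.

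The paper does not build the group $A(\Gamma)$ by a single transfinite tagging recursion. It factors through colored trees: first a \emph{Borel} (not transfinite) map $f:\mbox{CT}\to\mbox{TFAG}$ is defined, sending $\mathcal{T}$ to the structure $\mathcal{T}\otimes\mathbb{Z}\in\Omega^p_{\omega\times\omega,\{0\}}$ and then applying the codings of Section~\ref{frames}. This $f$ is only a half-reduction: $\mathcal{T}\cong\mathcal{T}'\Rightarrow f(\mathcal{T})\cong f(\mathcal{T}')$, but in the other direction one only gets that $\mathcal{T}$ and $\mathcal{T}'$ are tree-\emph{biembeddable}. This part is unconditional and the analysis is purely group-theoretic (support combinatorics, products of subtrees). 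The transfinite and $a\Delta^1_2$ content lives entirely in a second map $g:\mbox{Graphs}\to\mbox{CT}$, which sends $G$ to a colored tree $\mathcal{T}_{\mathrm{css}(G)}$ built from the $\in$-tree of the canonical Scott sentence, with a copy of a tree $\mathcal{S}_\beta$ glued above each node of foundation rank $\beta$.

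The hypothesis enters only here, and not in the way you describe. What is needed is that for every ordinal $\alpha$, the $<_{\mathbb{L}}$-least antichain (under tree-embedding) of colored trees of length $\alpha+1$ exists and is computed correctly in every transitive model of $ZFC^-$ containing $\alpha$. This is exactly what ``no transitive model of $ZFC^-+\kappa(\omega)$ exists'' buys, via Shelah's equivalence between the failure of $\kappa\to(\omega)^{<\omega}_2$ and the existence of length-$\kappa$ antichains of colored trees. The antichain $(\mathcal{S}_\beta)$ is what forces $\mathcal{T}_A\sim\mathcal{T}_{A'}\Rightarrow A=A'$: any biembedding must respect the rank-tags because the $\mathcal{S}_\beta$ are pairwise $\leq$-incomparable. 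There is no ``rogue automorphism'' argument, no indiscernibles inside the group, and no delicate $\Pi^1_2$ absoluteness step of the sort you sketch; the $a\Delta^1_2$-ness of $g$ is immediate because $g(G)$ is computed inside $\mathbb{L}[G]$ using the $<_{\mathbb{L}}$-least witness, and the Borel map $f$ is absolute by Shoenfield. Your proposed mechanism---that a CTM of $\kappa(\omega)$ would let one scramble $p$-adic tags via Ramsey indiscernibles in the group---is not the obstruction, and trying to carry that out would not lead to a proof.
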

	
	\begin{corollary}\label{ConsistentlyTFAGIsBadFirst}
	It is consistent with $ZFC$ that $\mbox{Graphs} \leq_{a \Delta^1_2} \mbox{TFAG}$, and hence that $\mbox{TFAG}$ is $a \Delta^1_2$-complete.
	\end{corollary}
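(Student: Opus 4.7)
The plan is to apply Theorem~\ref{EmbeddingWFFirst} inside a model of $ZFC$ where its hypothesis holds, and then bootstrap from $\mbox{Graphs} \leq_{a\Delta^1_2} \mbox{TFAG}$ to $a\Delta^1_2$-completeness using the Borel completeness of $\mbox{Graphs}$.

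To produce such a model, assume $\mbox{Con}(ZFC)$ and work in $L$. I would argue that $L$ contains no transitive model of $ZFC^- + \kappa(\omega) \mbox{ exists}$. If $M$ were such a model, then $M$ thinks an $\omega$-Erd\H{o}s cardinal exists, so the standard Silver--Rowbottom construction of indiscernibles carried out inside $M$ yields a real $r \in M$ with $M \models r = 0^\#$. Since ``$r = 0^\#$'' admits a $\Pi^1_2$ characterization that is absolute between transitive models containing $r$ and the surrounding universe, $V$ would also agree that $r = 0^\#$, contradicting $V = L$. Hence the hypothesis of Theorem~\ref{EmbeddingWFFirst} is satisfied in $L$, and the theorem gives $L \models \mbox{Graphs} \leq_{a\Delta^1_2} \mbox{TFAG}$, which is the first half of the corollary.

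For the completeness statement, I would invoke the classical result of Friedman and Stanley~\cite{FS} that $\mbox{Graphs}$ is Borel complete. Thus every $\Phi \in \mathcal{L}_{\omega_1 \omega}$ admits a Borel reduction $\Phi \leq_B \mbox{Graphs}$, and since a Borel function has $\Delta^1_1$ graph whose defining tree is correctly interpreted in any forcing extension, $\leq_B$ refines $\leq_{a\Delta^1_2}$. Composing with the reduction supplied by Theorem~\ref{EmbeddingWFFirst} yields $\Phi \leq_{a\Delta^1_2} \mbox{TFAG}$ for every such $\Phi$, witnessing $a\Delta^1_2$-completeness of $\mbox{TFAG}$ in $L$.

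The principal delicate point is the absoluteness step in the consistency argument: one must confirm that the $\omega$-Erd\H{o}s property of $\kappa(\omega)^M$ as interpreted inside $M$ actually suffices to run the usual derivation of $0^\#$ in a form whose output is a $V$-correct real. This is standard large-cardinal folklore, resting on the absoluteness of $L$ to transitive models and on the $\Pi^1_2$ characterization of the ``is $0^\#$'' predicate; no forcing or fine-structural machinery is needed beyond this.
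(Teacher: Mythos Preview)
Your argument contains a genuine error at the crucial step: the $\omega$-Erd\H{o}s cardinal $\kappa(\omega)$ does \emph{not} imply the existence of $0^\#$. The Silver--Rowbottom derivation of $0^\#$ requires a partition relation of the form $\kappa \to (\omega_1)^{<\omega}$ (equivalently, an \emph{uncountable} set of indiscernibles for some $L_\alpha$); the relation $\kappa \to (\omega)^{<\omega}_2$ only yields a countably infinite set of indiscernibles, which is not enough. In fact $\kappa(\omega)$ is fully compatible with $V=L$: this is exactly what part~(B) of the lemma in the paper records, namely that if $V=L$ and $\kappa(\omega)$ exists then $L_{\kappa(\omega)}\models ZFC$. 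Consequently your proposed reduction to a contradiction with $0^\#\notin L$ cannot go through, and indeed the conclusion you draw---that $L$ never contains a transitive model of $ZFC^-+\kappa(\omega)\text{ exists}$---is false whenever the ambient universe has $\kappa(\omega)$, since then by downward absoluteness (part~(A)) $L$ also has $\kappa(\omega)$ and, say, $H(\kappa(\omega)^+)^L$ is such a transitive model.

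The paper's route is different and avoids $0^\#$ entirely. One observes, via (A) and (B), that a transitive model of $ZFC^-+\kappa(\omega)\text{ exists}$ yields a transitive set model of $ZFC$ (pass to $L$ of the model and cut at its $\kappa(\omega)$), hence $\mbox{Con}(ZFC)$. G\"odel's second incompleteness theorem then gives that $ZFC$ together with ``there is no transitive model of $ZFC^-+\kappa(\omega)\text{ exists}$'' is consistent, and Theorem~\ref{EmbeddingWFFirst} applies in any such model. Your treatment of the second half---deducing $a\Delta^1_2$-completeness from $\mbox{Graphs}\leq_{a\Delta^1_2}\mbox{TFAG}$ via Borel completeness of $\mbox{Graphs}$---is correct.
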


It is natural to ask whether the set-theoretic hypothesis is necessary. For instance, the second author can show in \cite{BorelCompAndSB} that if $\kappa(\omega)$ exists, then a key part of the proof of Theorem~\ref{EmbeddingWFFirst} fails, namely, the conclusion of Theorem~\ref{DisparateTreesLemma}. This failure suggests the following question: are models of $\mbox{TFAG}$ controlled by some sort of biembeddability invariants? We investigate this question in Section~\ref{SBFailure}:

The Schr\"{o}der-Bernstein property is the simplest way that biembeddability can control isomorphism. This notion was originally introduced by Nurmagambetov \cite{Nurm1}, \cite{Nurm2}, who defined that a complete first order theory $T$ has the Schr\"{o}der-Bernstein property in the class of all models if for all $M, N \models T$, if $M$ and $N$ are elementarily biembeddable, then $M \cong N$.  Goodrick investigated this property further, including in his thesis \cite{Goodrick2} where he proves that if $T$ has the Schr\"{o}der-Bernstein property in the class of all models, then $T$ is classifiable of depth 1, i.e. $I(T, \aleph_\alpha) \leq |\alpha+\omega|^{2^{\aleph_0}}$ for all $\alpha$.

For our purposes, we want to tweak the definition in several ways. First of all, elementary embedding is somewhat awkward to deal with outside the context of complete first order theories.

\begin{definition}
Suppose $M, N$ are $\mathcal{L}$-structures. Then $f: M \leq N$ is an embedding if $f$ the following holds: whenever $R$ is a relation symbol of $\mathcal{L}$ then $f[R^M] \subseteq [R^N]$, and whenever $F$ is a function symbol of $\mathcal{L}$ then $f \circ F^M = F^N \circ f$. Say that $M \leq N$ if there is an embedding $f: M \to N$. Also, say that $(M, \overline{a}) \leq (N, \overline{b})$ if there is an embedding $f: M \leq N$ with $f(\overline{a}) = \overline{b}$. Finally, say that $M \sim N$ if $M \leq N \leq M$ and say that $(M, \overline{a}) \sim (N, \overline{b})$ if $(M, \overline{a}) \leq (N, \overline{b}) \leq (M, \overline{a})$.
\end{definition}

In the context of groups, we will only want to consider injective embeddings; formally then, we add a unary predicate for $\{(a, b): a \not= b\}$.

The following is what we mean by Schr\"{o}der-Bernstein property:

\begin{definition}
Suppose $\Phi$ is a sentence of $\mathcal{L}_{\omega_1 \omega}$. Then say that $\Phi$ has the Schr\"{o}der-Bernstein property if whenever $M, N$ are countable models of $\Phi$, if $M \sim N$ then $M \cong N$.
\end{definition}

This fails for $\mbox{TFAG}$, as first proved by Goodrick \cite{Goodrick2}. Recently, Calderoni and Thomas have shown in \cite{Thomas2} that the relation of biembeddability on models of $TFAG$ is $\Sigma^1_1$-complete, which is as bad as possible.

However, the proof of Theorem~\ref{EmbeddingWFFirst} suggests a weaker property: perhaps a group $G \models \mbox{TFAG}$ is determined by $\{(G, a)/\sim: a \in G\}$. We will call this the $1$-ary Schr\"{o}der Bernstein property. In Section~\ref{SBFailure}, we generalize this further to the $\alpha$-ary Schr\"{o}der-Bernstein property, for any ordinal $\alpha$; the $0$-ary Schr\"{o}der-Bernstein property is the Schr\"{o}der-Bernstein property.

The second author proves in \cite{BorelCompAndSB}:

\begin{theorem}
Suppose $\kappa(\omega)$ exists, and suppose $\alpha$ is an ordinal. If $\Phi$ is a sentence of $\mathcal{L}_{\omega_1 \omega}$ with the $\alpha$-ary Schr\"{o}der-Bernstein property, then $\Phi$ is not $a\Delta^1_2$-complete (and hence not Borel complete).
\end{theorem}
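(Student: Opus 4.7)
The plan is to argue by contradiction. Suppose $\Phi$ has the $\alpha$-ary Schr\"{o}der-Bernstein property and that $\mbox{Graphs} \leq_{a\Delta^1_2} \Phi$ via a reduction $f$ with $\Pi^1_2$ definition $\sigma$ and $\Sigma^1_2$ definition $\tau$ agreeing in every forcing extension. Set $\kappa = \kappa(\omega)$. The goal is to manufacture two non-isomorphic countable graphs $G_1, G_2$ whose images $f(G_1), f(G_2)$ are $\alpha$-ary biembeddable; the SB hypothesis then forces $f(G_1) \cong f(G_2)$, and since $f$ is a reduction, $G_1 \cong G_2$, a contradiction.

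First I would fix a countable transitive model $N$ containing $\kappa$ and parameters for $\sigma, \tau$, and inside $N$ build an Ehrenfeucht-Mostowski-style template: a graph $G^*$ on $\kappa$ whose vertices form a sequence $(a_\xi : \xi < \kappa)$ so that countable reducts $H \subseteq G^*$ can be compared via $f$ in a uniform manner. Because $f$ acts on countable graphs, I would pass to a forcing extension $N[G]$ collapsing $\kappa$ to $\aleph_0$, invoking forcing-absoluteness of the $\Delta^1_2$ definition to obtain a reduction $f^{N[G]}$ agreeing with $f$ below $\kappa$.

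The core step is to apply $\kappa \rightarrow (\omega)^{<\omega}_2$ inside $N$ to a coloring that assigns to each finite tuple $\bar\xi \in [\kappa]^{<\omega}$ a code for the $\sim$-biembeddability profile $\{(f(H_{\bar\xi}), \bar b)/\!\sim\, :\, \bar b \in f(H_{\bar\xi})^\alpha\}$, where $H_{\bar\xi}$ is a canonical countable reduct of $G^*$ meeting $\bar\xi$; a set-sized quotient of this coloring is used so that the Erd\H{o}s partition applies. An $\omega$-indiscernible sequence then yields two countable reducts $G_1, G_2$ of $G^*$ arranged to be non-isomorphic (by choosing mismatched order types on the indiscernible coordinate, e.g.\ $\omega$ versus $\omega + \omega$) but whose $f$-images exhibit the same $\alpha$-ary biembeddability profile and hence are $\alpha$-ary biembeddable.

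The principal obstacle is engineering this coloring: $\alpha$-tuples in $f(H)$ are not images of tuples from $H$, so the transfer from indiscernibility on the graph side to matching of $\sim$-classes of $\alpha$-tuples on the $\Phi$-side requires iterating the Erd\H{o}s partition through $\omega$-many tuple-length approximations and gluing the resulting partial matchings via the $\Pi^1_2$- and $\Sigma^1_2$-absoluteness of the definitions of $f$. Once the $\alpha$-ary biembeddability of $f(G_1)$ and $f(G_2)$ is secured, the SB hypothesis and reduction property close out the contradiction immediately.
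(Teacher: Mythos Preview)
First, note that the paper does \emph{not} prove this theorem: it is stated twice (in the introduction and in Section~\ref{SBFailure}) but each time attributed to the second author's separate paper \cite{BorelCompAndSB}. So there is no proof in the present paper to compare against, and your proposal must be judged on its own merits.

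On those merits, there is a genuine gap rooted in a misreading of the definition of $\sim_\alpha$. You treat the ``$\alpha$-ary'' in the Schr\"{o}der--Bernstein property as referring to $\alpha$-length tuples: you speak of ``$\bar b \in f(H_{\bar\xi})^\alpha$'' and of ``$\alpha$-tuples in $f(H)$.'' But $\sim_\alpha$ is defined by a back-and-forth recursion of ordinal depth $\alpha$ over \emph{finite} tuples, with biembeddability as the base case (see the clauses in the paper: $(M,\overline{a})\sim_{\alpha+1}(N,\overline{b})$ iff one can extend by a single element on either side and drop to $\sim_\alpha$). For $\alpha\geq\omega$ there are no $\alpha$-tuples at all, and your ``biembeddability profile'' is not the object that governs $\sim_\alpha$. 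Consequently the coloring you describe does not encode the right information, and indiscernibility for it would not yield $f(G_1)\sim_\alpha f(G_2)$.

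Even setting aside that misreading, the argument has a second structural gap. You want to apply $\kappa\to(\omega)^{<\omega}_2$ to a coloring whose values are (equivalence classes of) biembeddability profiles of countable $\Phi$-structures; you note that ``a set-sized quotient'' is needed but do not produce one. The partition relation requires a finite (or at least suitably small) range, and there is no evident bound on the number of such profiles independent of the very complexity question at issue. Moreover, the reduction $f$ is only defined on countable graphs with universe $\omega$; your plan to build a single graph $G^*$ on $\kappa$, take countable ``reducts'' $H_{\bar\xi}$, and compare their $f$-images presupposes a coherence of $f$ across varying domains that a $\Delta^1_2$ reduction need not have. Your final paragraph essentially concedes that the transfer from graph-side indiscernibility to $\Phi$-side $\sim_\alpha$-equivalence is the crux, and leaves it unresolved. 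As written, then, the proposal is an outline with the key step missing and resting on an incorrect interpretation of the hypothesis.
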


In Section~\ref{SBFailure}, we prove the following:

\begin{theorem}\label{SBFailureThm}
For every $\alpha < \kappa(\omega)$, $\mbox{TFAG}$ fails the $\alpha$-ary Schr\"{o}der-Bernstein property.
\end{theorem}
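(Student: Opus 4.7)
The plan is, for each ordinal $\alpha < \kappa(\omega)$, to exhibit two countable torsion-free abelian groups $G_0, G_1$ with $G_0 \not\cong G_1$ but with matching $\alpha$-tuple biembeddability spectra: for every $\bar{a} \in G_0^\alpha$ there is some $\bar{b} \in G_1^\alpha$ with $(G_0, \bar{a}) \sim (G_1, \bar{b})$, and vice versa. This directly refutes the $\alpha$-ary Schr\"oder--Bernstein property as defined.

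Because $\alpha < \kappa(\omega)$, I can fix a coloring $c : [\alpha]^{<\omega} \to 2$ with no infinite monochromatic subset. I would use $c$ to build a pair of non-isomorphic labeled well-founded trees $T_0, T_1$ of rank at most $\alpha$ with the following dichotomy: any global isomorphism $T_0 \to T_1$ would force stabilization on an infinite $c$-homogeneous set, contradicting the choice of $c$; while on the other hand any $\alpha$-indexed family of nodes in $T_i$ only interacts with finitely-supported coloring data that can be reproduced inside $T_{1-i}$, yielding a matching of the corresponding down-closures together with their labels. Next I would pass from trees to TFAGs via the frame machinery of Section~\ref{frames}, in the style used for Theorem~\ref{HjorthFirst} and Theorem~\ref{EmbeddingWFFirst}: tag node-indexed free generators by demanding divisibility by algebraically independent $p$-adic integers, so that (a) isomorphisms of groups recover isomorphisms of labeled trees, and (b) combinatorial embeddings of labeled sub-trees lift to group embeddings that respect the tagging.

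The main obstacle is the second half of this frame-to-group transfer. The rigidity direction, $T_0 \not\cong T_1 \Rightarrow G_0 \not\cong G_1$, is the familiar one and follows from the robustness built into the frame coding. Harder is showing that an $\alpha$-tuple $\bar{a} \in G_0$ determines only a sufficiently small sub-configuration of $T_0$---small enough that the local symmetry furnished by $c$ still applies---and that the resulting partial biembedding of labeled sub-trees can be extended coherently to genuine biembeddings $G_0 \leftrightarrows G_1$ pinning down $\bar{a}$ and a matching $\bar{b}$. This bookkeeping is exactly where the bound $\alpha < \kappa(\omega)$ is used: were $\alpha$ allowed to equal $\kappa(\omega)$, the Erdős partition relation would let one splice all these local matchings into a global isomorphism and the whole construction would collapse, which is consistent with the second author's theorem that the $\alpha$-ary Schr\"oder--Bernstein property for any $\alpha$ rules out $a\Delta^1_2$-completeness.
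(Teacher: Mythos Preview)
Your proposal misreads the $\alpha$-ary Schr\"oder--Bernstein property. The relation $\sim_\alpha$ is a back-and-forth relation of ordinal depth $\alpha$, not a matching of $\alpha$-tuples chosen all at once: $(M,\overline{a}) \sim_{\alpha+1} (N,\overline{b})$ requires that each single new element on either side can be answered on the other while preserving $\sim_\alpha$. Your condition ``for every $\overline{a}\in G_0^\alpha$ there is $\overline{b}\in G_1^\alpha$ with $(G_0,\overline{a})\sim(G_1,\overline{b})$'' has the wrong quantifier pattern already for $\alpha=2$ and does not imply $G_0\sim_\alpha G_1$, so establishing it would not refute the property. Moreover, for $\alpha\ge\omega_1$ no pair of countable models can witness failure at all; one must produce possibly uncountable $M\not\equiv_{\infty\omega}N$ with $M\sim_\alpha N$ and invoke Lemma~\ref{SuffCondForFailSB}.

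Beyond this, two steps of the plan would not go through as written. The frame coding does not recover tree isomorphism from group isomorphism: Theorem~\ref{ZTreesLemma} only recovers the biembeddability classes $\{\mathcal{T}_{\ge t}/\!\sim: t\in T\}$, so separating $T_0,T_1$ merely up to isomorphism does not force $G_0\not\cong G_1$. And the mechanism you propose for non-isomorphism---that an isomorphism between two well-founded trees of rank at most $\alpha$ would yield an infinite $c$-homogeneous set---has no evident content, since such an isomorphism carries no infinite chain on which a partition argument could act. The paper takes a quite different route: working under the hypothesis that $\kappa(\omega)$ does not exist (and then restricting to $H(\kappa(\omega))$ to deduce the stated theorem), it builds by a chain of approximations equipped with auxiliary rank functions (Lemmas~\ref{ExtendLemma1} and~\ref{ExtendLemma2}) two frame models whose associated orders $(X^{\overline{G}},\le^{\overline{G}})$ are respectively ill-founded and well-founded; the hypothesis enters only through the existence of long antichains of colored trees, used in Lemma~\ref{CodeSeq} to replace uncountable index sets by countable ones.
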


The construction breaks down at $\kappa(\omega)$, so the following remains open:

\vspace{2 mm}

\noindent \textbf{Question.} Does $\mbox{TFAG}$ have the $\kappa(\omega)$-ary Schr\"{o}der-Bernstein property?

\vspace{2 mm}

\noindent \textbf{Acknowledgements.} We would like to thank Julia Knight for pointing out a gap in a previous version.
\section{Some Bireducibilities with $\mbox{TFAG}$}\label{frames}
Notation: If $X$ is a set and $G$ is a group we let $\oplus_X G$ denote the group of functions from $X$ to $G$ with finite support; so we consider $\oplus_X G \leq G^X$.

For $p$ a prime, $\mathbb{Z}[\frac{1}{p}]$ is the subring of $\mathbb{Q}$ generated by $\frac{1}{p}$; and similarly for sets of primes. $\mathbb{Z}_{(p)}$ (read: $\mathbb{Z}$ localized at the ideal $(p)$) is $\mathbb{Z}[\frac{1}{q}: q \not= p]$. Let $\mathbb{Z}_p$ be the p-adic integers, i.e. the completion of $\mathbb{Z}_{(p)}$ under the p-adic metric. Let $\mathbb{Q}_p$ be the field completion of $\mathbb{Z}_p$.

Given $G \leq H$ groups, say that $G$ is a pure subgroup of $H$ is for every $n < \omega$, $nH \cap G = nG$. If $p$ is a prime, say that $G$ is a $p$-pure subgroup of $H$ if for every $n < \omega$, $p^n H \cap G = p^n G$. 

The following is a generalization of Hjorth's notion of ``eplag."

\begin{definition}
Suppose $\I$ and $\J$ are countable index sets. Then let $\mathcal{L}_{\I, \J}$ be the language extending the language of abelian groups, with a unary predicate symbol $G_i$ for each $i \in I$, and a unary function symbol $\phi_j$ for each $j \in J$ (we will allow $\phi_j$ to be a partial function).

Let $\Omega_{\I, \J}$ be the infinitary $\mathcal{L}_{\mathcal{F}}$-sentence such that $(G, +, G_i: i \in I, \phi_j: j \in J) \models \Omega_{\I, \J}$ if and only if the following all hold:

\begin{itemize}
\item $(G, +) \equiv_{\infty \omega} \oplus_{\omega} \mathbb{Z}$;
\item Each $G_i$ is a subgroup of $G$;
\item Each $\mbox{dom}(\phi_j)$ is either equal to all of $G$, or else to some $G_i$;
\item Each $\phi_j: \mbox{dom}(\phi_j) \to G$ is a homomorphism.
\end{itemize}

Let $\Omega_{\I, \J}^p$ assert additionally that each $G_i$ is a pure subgroup of $G$. 
\end{definition}

Some important examples: the countable models of $\Omega_{\{0\}, 0}$ are of the form $(G, H)$ where $G$ is free abelian of infinite rank (i.e., isomorphic to $\oplus_\omega \mathbb{Z}$) and $H$ is a subgroup of $G$. The countable models of $\Omega_{0, \{0\}}$ are of the form $(G, \phi)$ where $G$ is free abelian of infinite rank and $\phi: G \to G$ is a homomorphism. The countable models of $\Omega_{\omega, 0}$ are of the form $(G, G_n: n < \omega)$, where $G$ is free abelian of infinite rank and each $G_n$ is a subgroup of $G$.

We aim to prove the following. Let $\mbox{AG}$ denote the theory of abelian groups.

\begin{theorem}\label{Bireducibilities}
Suppose $\I, \J$ are countable index sets, not both empty. Then $\Omega_{\I, \J}^p \sim_B \Omega_{\I, \J} \sim_B \mbox{TFAG} \sim_B \mbox{AG}$.
\end{theorem}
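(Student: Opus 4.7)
The plan is to close a cycle of Borel reductions through the four classes: the trivial arrows $\mbox{TFAG} \leq_B \mbox{AG}$ and $\Omega^p_{\I, \J} \leq_B \Omega_{\I, \J}$ are witnessed by the identity on models (every TFAG is an AG, and every pure subgroup is a subgroup). This leaves three substantive reductions: $\mbox{AG} \leq_B \Omega_{\I, \J}$, $\Omega_{\I, \J} \leq_B \mbox{TFAG}$, and $\mbox{TFAG} \leq_B \Omega^p_{\I, \J}$. Together these close the cycle and yield the theorem.

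For $\mbox{AG} \leq_B \Omega_{\I, \J}$ I would use the canonical free presentation. Given a countable abelian group $A$, let $F_A = \bigoplus_{a \in A \setminus \{0\}} \mathbb{Z} e_a \oplus \bigoplus_{n < \omega} \mathbb{Z} f_n$, where the second summand is padding to guarantee that $F_A$ has infinite rank even when $A$ is finite, and let $K_A$ be the kernel of the map $F_A \to A$ sending $e_a \mapsto a$ and $f_n \mapsto 0$. If some index $i_0 \in \I$ is available, I set $G_{i_0} = K_A$ (with the remaining predicates and function symbols set trivially), so that $(F_A, K_A) \models \Omega_{\I, \J}$. The equivalence $A \cong A' \Leftrightarrow (F_A, K_A) \cong (F_{A'}, K_{A'})$ is clear: the forward direction extends an isomorphism of $A$ to a basis-indexed isomorphism of free abelian groups taking $K_A$ to $K_{A'}$, and the reverse direction is immediate by taking quotients. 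If $\I = \emptyset$ but some $j_0 \in \J$ is available, I replace the subgroup $K_A$ by a homomorphism $\phi_{j_0}$ with image generating $K_A$, adjoining a basis element $f_{a,b}$ for each pair $(a,b) \in (A \setminus \{0\})^2$ and setting $\phi_{j_0}(f_{a,b}) = e_{a+b} - e_a - e_b$ (with the obvious adjustment when $a + b = 0$).

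The symmetric reduction $\mbox{TFAG} \leq_B \Omega^p_{\I, \J}$ is essentially the same construction, exploiting the fact that when $A$ is torsion-free the subgroup $K_A$ is automatically pure in $F_A$, since purity of a subgroup of a free abelian group is equivalent to torsion-freeness of the quotient. Thus $(F_A, K_A) \models \Omega^p_{\I, \J}$ directly, and if $\I = \emptyset$ the homomorphism variant carries over unchanged because $\Omega^p$ imposes no purity condition on homomorphisms.

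The main technical step, and the principal obstacle, is $\Omega_{\I, \J} \leq_B \mbox{TFAG}$, which I would establish via the frame construction developed in Section \ref{frames}. Given $(G, G_i, \phi_j) \models \Omega_{\I, \J}$, I would fix a prime $p$ and an algebraically independent family $(\pi_r : r \in \I \sqcup \J)$ of $p$-adic integers over $\mathbb{Q}$ inside $\mathbb{Z}_p$, and enlarge $G$ to a TFAG $\tilde{G}$ in which the elements of $G_i$ become ``$\pi_i$-infinitely divisible'' inside $\tilde{G}$ and the graph of $\phi_j$ is tagged analogously by $\pi_j$. The algebraic independence of the $\pi_r$ ensures that different tags do not interact, so $G_i$ is recovered as the set of $g \in G \leq \tilde{G}$ infinitely divisible by $\pi_i$, and similarly $\phi_j$ is recovered from the tagged graph. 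Borel-measurability is immediate since the whole construction is a countable combination of algebraic operations. The delicate point is the recovery argument: the upgrade from distinct primes (as in Hjorth's original coding) to algebraically independent $p$-adic integers, following Goodrick, is precisely what makes different tags non-interacting, and it is the key to showing that any isomorphism of output TFAGs restricts to an isomorphism of input frames.
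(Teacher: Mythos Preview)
Your overall cycle is the right idea and steps 3 and 5 (the free-presentation arguments) are essentially the paper's Lemmas~\ref{BiredLemma1} and~\ref{BiredLemma2}. The genuine gap is in step~4, the reduction $\Omega_{\I,\J}\leq_B\mbox{TFAG}$, and it has two related problems.

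First, your description of the frame construction is not what Goodrick's idea actually does. Since each $\pi_i$ is chosen to be a \emph{unit} of $\mathbb{Z}_p$, ``$\pi_i$-infinite divisibility'' is vacuous in any $\mathbb{Z}_p$-module, so that cannot be the recovery mechanism. The paper's construction (Lemma~\ref{HjorthThm}) is different: one embeds $\oplus_\omega\mathbb{Z}$ into $\oplus_\omega\mathbb{Z}_p$, throws in $\gamma_n G_n$ for each $n$, and takes the $p$-pure closure; the subgroup $G_m$ is then recovered not by divisibility but by the condition $a\in G_m\Leftrightarrow \gamma_m a\in\tilde G$, which works because of the linear independence of the products $\gamma_m\gamma_n$.

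Second, and more seriously, that recovery argument \emph{requires} each $G_m$ to be ($p$-)pure in $\oplus_\omega\mathbb{Z}$: the computation only yields $a=p^k b$ with $b\in G_m$, and one needs purity to conclude $a\in G_m$. So the frame construction gives $\Omega^p_{\omega,0}\leq_B\mbox{TFAG}$, not $\Omega_{\omega,0}\leq_B\mbox{TFAG}$. The paper closes this gap with a separate step (Lemma~\ref{BiredLemma4}): replace each $G_n$ by a fresh free summand $\oplus_{G_n}\mathbb{Z}$ together with its augmentation map onto $G_n$, which is pure because it is a direct summand. Without some such purification step your chain breaks at $\Omega_{\I,\J}\leq_B\mbox{TFAG}$. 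A smaller point in the same step: tagging ``the graph of $\phi_j$'' lives in $G\times G$, not $G$, so you also need the passage (Lemma~\ref{BiredLemma3}) that replaces homomorphisms by subgroups of $G\times G$ while recording $G\times 0$ and the diagonal to recover the product structure.
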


The proof will be via many lemmas.

\begin{lemma}\label{BiredLemma1}
$\mbox{TFAG} \leq_B \Omega_{\{0\}, 0}^p$ and $\mbox{AG} \leq_B \Omega_{\{0\}, 0}$.
\end{lemma}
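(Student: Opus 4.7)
The plan is to use the canonical free presentation of a countable abelian group. Given $A \in \mbox{Mod}(\mbox{AG})$ with universe $\mathbb{N}$, I would define $F_A := \oplus_A \mathbb{Z}$ with free generators $\{e_a : a \in A\}$, and let $H_A$ be the kernel of the canonical surjection $\pi_A : F_A \to A$, $e_a \mapsto a$. Then $F_A$ is a countable free abelian group of infinite rank and $H_A \leq F_A$ is a subgroup, so $(F_A, H_A) \models \Omega_{\{0\},0}$. When $A \models \mbox{TFAG}$, I claim $H_A$ is automatically pure in $F_A$: if $n x \in H_A$ for some $x \in F_A$, then the image $x + H_A \in F_A/H_A \cong A$ is $n$-torsion, hence zero since $A$ is torsion-free; thus $x \in H_A$ and $nx \in nH_A$, giving $nF_A \cap H_A = nH_A$. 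So in this case $(F_A, H_A) \models \Omega^p_{\{0\},0}$.

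For Borelness of the map $A \mapsto (F_A, H_A)$, I would fix once and for all a computable bijection between $\mathbb{N}$ and the set of finitely supported functions $\mathbb{N} \to \mathbb{Z}$. This puts a canonical copy of $F_A$ on $\mathbb{N}$ whose additive structure does not depend on $A$ at all. The only $A$-dependent data is the unary predicate $H_A$, and membership $f \in H_A$ is determined by finitely many applications of addition in $A$: namely $f \in H_A$ iff $\sum_{a \in \mathrm{supp}(f)} f(a) \cdot a = 0_A$. This condition is clearly Borel in the code of $A$.

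To verify that the map is a reduction, if $\phi : A \to A'$ is an isomorphism, then $\widehat{\phi} : F_A \to F_{A'}$ defined by $e_a \mapsto e_{\phi(a)}$ (and extended linearly) is a group isomorphism carrying $H_A$ onto $H_{A'}$, so $(F_A, H_A) \cong (F_{A'}, H_{A'})$ as $\mathcal{L}_{\{0\},\emptyset}$-structures. Conversely, any isomorphism $(F_A, H_A) \cong (F_{A'}, H_{A'})$ descends to an isomorphism of quotients $F_A/H_A \cong F_{A'}/H_{A'}$, and since $F_A/H_A \cong A$ and $F_{A'}/H_{A'} \cong A'$, we conclude $A \cong A'$.

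I do not expect any significant obstacle; the lemma is essentially packaging the standard fact that every countable abelian group is a quotient of the countably generated free abelian group, together with the equivalence between torsion-freeness of the quotient and purity of the kernel. The only care required is to make the encoding of $F_A$ and the predicate $H_A$ uniform in $A$, which is immediate from the explicit description above. (A minor cosmetic point is that the image universe is the fixed set of finitely supported functions on $\mathbb{N}$ rather than $\mathbb{N}$ itself, but composing with the computable bijection mentioned above resolves this.)
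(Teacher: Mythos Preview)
Your proposal is correct and follows essentially the same approach as the paper: both use the canonical surjection $\oplus_A \mathbb{Z} \to A$ (the paper calls it the augmentation map) and take its kernel, observing that the kernel is pure precisely when $A$ is torsion-free. You supply more detail on Borelness and on the two directions of the reduction than the paper does, but the underlying construction is identical.
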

\begin{proof}

We describe the essential features of the construction, leaving it to the reader to check that it is Borel when formulated as an operation on Polish spaces. Suppose $G$ is an (infinite) countable abelian group. Define $\phi: \oplus_{G} \mathbb{Z} \to G$ to be the augmentation map, that is given $a \in \oplus_{G}\mathbb{Z}$, let $\phi(a) = \sum_{b \in G} a(b) b$ (this is really a finite sum).  Let $K$ be the kernel of $\phi$. Thus $G \mapsto (\oplus_G \mathbb{Z}, K)$ works, using $G \cong \sum_{G} \mathbb{Z}/K$. This shows $\mbox{AG} \leq_B \Omega_{\{0\}, 0}$; but note that if $G$ is torsion-free, then $K$ will be pure, so we also get $\mbox{TFAG} \leq_B \Omega_{\{0\}, 0}^p$.
\end{proof}
%
%

\begin{lemma}\label{BiredLemma2}
$\Omega_{\{0\}, 0} \leq_B \Omega_{0, \{0\}}$. Hence, whenever $\I, \J$ are not both empty, $\Omega_{\{0\}, 0} \leq_B \Omega_{\I, \J}$ and $\Omega_{\{0\}, 0}^p \leq_B \Omega_{\I, \J}^p$.
\end{lemma}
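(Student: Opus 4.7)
The plan is to build an explicit Borel map $(G, H) \mapsto (G', \phi)$ where $G'$ is free abelian of countably infinite rank and $\phi: G' \to G'$ is an endomorphism that encodes $H \leq G$ via $G = \ker \phi$ and $\mbox{image}\,\phi = H$. Concretely, Borel-ly select a basis $B$ of $H$ (this can be done algorithmically, e.g.\ by computing Hermite normal forms on finite approximations of $H$ relative to a fixed basis of $G$, using that a subgroup of a free abelian group is free). Let $F = \oplus_B \mathbb{Z}$ with free basis $\{e_b : b \in B\}$, put $G' = G \oplus F$, and define $\phi|_G = 0$ and $\phi(e_b) = b \in G$ for $b \in B$, extended linearly. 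Then $G'$ is free abelian of countably infinite rank, $\phi$ is a total endomorphism (with $\phi^2 = 0$), and since $B$ is a basis of $H$ the restriction $\phi|_F : F \to H$ is a group isomorphism. In particular $(G', \phi) \models \Omega_{0, \{0\}}$, $\ker \phi = G$, and $\mbox{image}\,\phi = H$.

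For the $(\Leftarrow)$ direction of the reduction, any isomorphism $\beta : (G_1', \phi_1) \to (G_2', \phi_2)$ automatically sends $\ker \phi_1$ to $\ker \phi_2$ and $\mbox{image}\,\phi_1$ to $\mbox{image}\,\phi_2$, so by the previous paragraph it restricts to an isomorphism $(G_1, H_1) \to (G_2, H_2)$. For the $(\Rightarrow)$ direction, given $\alpha : (G_1, H_1) \to (G_2, H_2)$, I would define $\beta|_{G_1} = \alpha$ and $\beta(e_b) = (\phi_2|_{F_2})^{-1}(\alpha(b))$ for $b \in B_1$. Since $\alpha(B_1)$ is a basis of $H_2$ and $\phi_2|_{F_2}$ is an isomorphism onto $H_2$, the set $\{\beta(e_b) : b \in B_1\}$ is a basis of $F_2$, so $\beta$ extends to a group isomorphism $G_1' \to G_2'$; checking on the chosen basis then gives $\beta \circ \phi_1 = \phi_2 \circ \beta$.

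For the ``Hence'' clauses: if $\I \neq \emptyset$, fix $i_0 \in \I$ and simply extend $(G, H)$ to an $\Omega_{\I, \J}$-structure by setting $G_{i_0} = H$, $G_i = G$ for $i \neq i_0$, and all $\phi_j = 0$; the pure variant works identically since $H$ (by hypothesis) and $G$ are both pure in $G$. If $\I = \emptyset$ and $\J \neq \emptyset$, compose the main reduction with the trivial assignment $\phi_{j_0} = \phi$ for some fixed $j_0 \in \J$ and $\phi_j = 0$ otherwise (the pure variant is vacuous since there are no $G_i$). The one subtlety in the $(\Rightarrow)$ direction is that the bases $B_1, B_2$ chosen independently for $H_1, H_2$ need not correspond under $\alpha$, but this poses no obstacle because $\phi_2|_{F_2}$ is a known isomorphism onto $H_2$, so any basis of $H_2$ pulls back canonically to a basis of $F_2$; confirming Borelness of the construction (including the choice of $B$) is the other routine-but-essential check, as flagged by the authors in the preceding proof.
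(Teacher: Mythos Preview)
Your proof is correct and follows essentially the same approach as the paper: form $G' = G \oplus H'$ with $H'$ an abstract free copy of $H$, set $\phi|_G = 0$ and let $\phi|_{H'}$ be the natural isomorphism onto $H$, so that $G = \ker\phi$ and $H = \mbox{im}\,\phi$. You are more explicit than the paper about Borel-ly selecting a basis of $H$ and about verifying both directions of the reduction, but the construction and the recovery mechanism are identical; the paper likewise dispatches the ``Hence'' clause by noting that $\Omega_{0,\{0\}}^p = \Omega_{0,\{0\}}$ and that the remaining inclusions are trivial.
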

\begin{proof}
Suppose $(G, H) \models \Omega_{\{0\}, 0}$ is a given countable model; so $G$ is free abelian of infinite rank and $H$ is a subgroup of $G$. Write $G' = G \times H'$, where $H' \cong H$; note that $H'$ and hence $G'$ is free abelian, since subgroups of free abelian groups are free. Define $\phi: G' \to G'$ via $\phi \restriction_G = 0$ and $\phi \restriction_{H'}: H' \cong H$. Then $(G, H) \mapsto (G', \phi)$ works, using $G = \mbox{ker}(\phi)$ and $H = \mbox{im}(\phi)$.

The second claim follows trivially (note $\Omega_{0, \{0\}}^p = \Omega_{0, \{0\}}$).
\end{proof}

\begin{lemma}\label{BiredLemma3}
For any countable index sets $\I, \J$, $\Omega_{\I, \J} \leq_B \Omega_{\omega, 0}$ and $\Omega_{\I, \J}^p \leq_B \Omega_{\omega, 0}^p$.
\end{lemma}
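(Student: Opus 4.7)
The plan is to encode each homomorphism $\phi_j$ as its graph inside a doubled free abelian group, using extra marked subgroups to rigidify the encoding. Given $(G, G_i : i \in \I, \phi_j : j \in \J) \models \Omega_{\I, \J}$, set $G^* := G \oplus G$, which remains free abelian of countably infinite rank. Fix once and for all a bijection between $\omega \setminus \{0, 1, 2\}$ and $\I \sqcup \J$, and define a sequence $(K_n : n < \omega)$ of subgroups of $G^*$ by $K_0 := G \oplus 0$, $K_1 := 0 \oplus G$, $K_2 := \Delta := \{(x, x) : x \in G\}$, and for $n \geq 3$ set $K_n := G_i \oplus 0$ if $n$ is matched with $i \in \I$, and $K_n := \Gamma_{\phi_j} := \{(x, \phi_j(x)) : x \in \mbox{dom}(\phi_j)\}$ if $n$ is matched with $j \in \J$. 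As in Lemma~\ref{BiredLemma1}, this assignment is Borel and lands in $\Omega_{\omega, 0}$.

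The forward direction is immediate: an isomorphism $g : G_1 \to G_2$ of $\Omega_{\I, \J}$-structures lifts to $g \oplus g : G_1^* \to G_2^*$, which sends each $K_n^{(1)}$ to $K_n^{(2)}$. The reverse direction is where the real work lies. If $f : G_1^* \to G_2^*$ is an isomorphism of the encoded structures, then because $f$ preserves $K_0$ and $K_1$ it must split as $f = f_0 \oplus f_1$ for group isomorphisms $f_0, f_1 : G_1 \to G_2$; preservation of $\Delta$ then forces $f_0 = f_1 =: g$. The remaining conditions read off directly as $g(G_i^{(1)}) = G_i^{(2)}$ for each $i \in \I$ and $\phi_j^{(2)} \circ g = g \circ \phi_j^{(1)}$ on $\mbox{dom}(\phi_j^{(1)})$ for each $j \in \J$, so $g$ is an isomorphism of the original $\Omega_{\I, \J}$-structures.

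For the pure version $\Omega_{\I, \J}^p \leq_B \Omega_{\omega, 0}^p$, I would check that each $K_n$ is pure in $G^*$: the subgroups $K_0, K_1, \Delta$ are pure because $G$ is torsion-free; $G_i \oplus 0$ is pure in $G^*$ whenever $G_i$ is pure in $G$; and $\Gamma_{\phi_j}$ is pure since $\mbox{dom}(\phi_j)$ is either $G$ or some pure $G_i$ (so $na \in \mbox{dom}(\phi_j)$ implies $a \in \mbox{dom}(\phi_j)$, after which torsion-freeness forces $b = \phi_j(a)$).

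The only real subtlety, which I would flag as the main obstacle, is the need for the diagonal subgroup $\Delta$. Without it, an isomorphism of encoded structures would only produce a pair $(f_0, f_1)$ satisfying the intertwining relation $\phi_j^{(2)} \circ f_0 = f_1 \circ \phi_j^{(1)}$, which is strictly weaker in general than an isomorphism of the original $\Omega_{\I, \J}$-structure; introducing $\Delta$ as a marked subgroup is precisely what rigidifies this situation and makes the reduction work.
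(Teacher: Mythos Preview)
Your proof is correct and follows essentially the same approach as the paper: form $G \times G$, mark the first factor $G \times 0$, the diagonal, the copies $G_i \times 0$, and the graphs $\Gamma_{\phi_j}$, then observe that graphs of partial homomorphisms are pure exactly when their domains are. The only difference is that you additionally mark $0 \oplus G$, which makes the splitting $f = f_0 \oplus f_1$ immediate and lets you write out the reverse direction cleanly; the paper omits this extra subgroup and simply asserts the construction ``works,'' but the underlying idea is identical.
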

\begin{proof}
Write $\I' = \I \cup \J \cup \{*_0, *_1\}$ (we suppose this is a disjoint union). We show that $\Omega_{\I, \J} \leq_B \Omega_{\I', 0}$ and $\Omega_{\I, \J}^p \leq_B \Omega_{\I', 0}^p$.

Suppose $(G, G_i: i \in \I, \phi_j: j \in \J) \models \Omega_{\I, \J}$. Define $G' = G \times G$; for each $i \in \I$, define $G'_{i}$ to be the copy of $G_i$ in the first factor of $G'$; for each $j \in \J$, define $G'_j$ to be the graph of $\phi_j$; define $G'_{*_0} = G \times 0$;  and finally let $G'_{*_1}$ be the graph of the identify function $id_G: G \to G$. Then $(G', G'_i: i \in \I') \models \Omega_{\I', 0}$ works. Also note that if each $G_i$ is pure, then so is each $G'_{i'}$; this is because the graph of a partial homomorphism is pure if and only if its domain is pure.
\end{proof}

\begin{lemma}\label{BiredLemma4}
$\Omega_{\omega, 0} \leq_B \Omega_{\omega, 0}^p$.
\end{lemma}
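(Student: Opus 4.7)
The plan is to encode each subgroup $G_n \leq G$ as the graph of an isomorphism from a disjoint copy of $G_n$ onto $G_n$, exploiting the fact that the graph of a homomorphism into a torsion-free group is automatically a pure subgroup (as already used implicitly at the end of Lemma~\ref{BiredLemma3}).

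Given $(G, G_n : n < \omega) \models \Omega_{\omega, 0}$, each $G_n$ is free abelian since subgroups of free abelian groups are free; so fix a free abelian $H_n$ together with an isomorphism $\psi_n : H_n \to G_n$. Set $G' := G \oplus \bigoplus_{n < \omega} H_n$, which remains free abelian of countable infinite rank. Designate three families of subgroups of $G'$: the first summand $A := G$, each auxiliary summand $B_n := H_n$, and the graph $C_n := \{\psi_n(h) + h : h \in H_n\} \subseteq A \oplus B_n$. Reindexing the countable collection $\{A\} \cup \{B_n\}_{n < \omega} \cup \{C_n\}_{n < \omega}$ by $\omega$ yields a manifestly Borel map $f : \mathrm{Mod}(\Omega_{\omega, 0}) \to \mathrm{Mod}(\Omega^p_{\omega, 0})$.

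The key purity verification is for $C_n$: if $(x, (y_m)_m) \in G'$ and $k \cdot (x, (y_m)_m) \in C_n$ for some $k \geq 1$, then $k y_m = 0$ forces $y_m = 0$ for $m \neq n$ by torsion-freeness of $H_m$, and $k x = k \psi_n(y_n)$ in the torsion-free group $G$ forces $x = \psi_n(y_n)$, so the element lies in $C_n$. The summands $A$ and $B_n$ are pure trivially.

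For the reduction property, the forward direction follows by transporting any isomorphism $(G, G_n) \to (\tilde G, \tilde G_n)$ through the $\psi_n$'s. For the converse, any isomorphism $F : f(G, G_n) \to f(\tilde G, \tilde G_n)$ must send $A, B_n, C_n$ to $\tilde A, \tilde B_n, \tilde C_n$ respectively; since $A \cap B_n = 0$ and $C_n \subseteq A \oplus B_n$, the projection $\pi_n : A \oplus B_n \to A$ along $B_n$ is well-defined and satisfies $\pi_n(C_n) = G_n$ under the identification $A = G$. Because $F$ intertwines $\pi_n$ with $\tilde \pi_n$, the restriction $F\restriction_A$ is the required isomorphism $(G, G_n) \to (\tilde G, \tilde G_n)$. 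The main obstacle is the purity check for $C_n$, which is essentially the only place that torsion-freeness of the groups is genuinely used; the remainder is routine bookkeeping.
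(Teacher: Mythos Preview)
Your proof is correct, and it takes a route that is close in spirit to the paper's but differs in execution. The paper does not reduce directly to $\Omega^p_{\omega,0}$: instead it shows $\Omega_{\omega,0} \leq_B \Omega^p_{\omega\cup\{*\},\omega}$ by adjoining for each $n$ the free group $\oplus_{G_n}\mathbb{Z}$ on the \emph{set} $G_n$, and taking the partial homomorphism $\phi_n$ to be the augmentation map $\oplus_{G_n}\mathbb{Z}\twoheadrightarrow G_n$ (so $G_n$ is recovered as $\mathrm{Im}(\phi_n)$); it then invokes Lemma~\ref{BiredLemma3} to pass from $\Omega^p_{\omega\cup\{*\},\omega}$ back down to $\Omega^p_{\omega,0}$. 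You instead adjoin a single isomorphic copy $H_n\cong G_n$ and encode the isomorphism directly as its graph $C_n$, which is pure by the same observation used at the end of Lemma~\ref{BiredLemma3}. Your approach is more economical (no detour through function symbols, no second appeal to Lemma~\ref{BiredLemma3}) and the recovery of $G_n$ via the projection $\pi_n(C_n)$ is clean; the paper's choice of $\oplus_{G_n}\mathbb{Z}$ and the augmentation map has the minor advantage that the auxiliary data is canonically determined by the underlying set of $G_n$, so no choice of $\psi_n$ is needed, but since one may simply take $H_n=G_n$ and $\psi_n=\mathrm{id}$ this is not a real issue for Borelness.
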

\begin{proof}
By the preceding lemma, it suffices to find index sets $\I, \J$ such that $\Omega_{\omega, 0} \leq_B \Omega_{\I, \J}^p$. Write $\I = \omega \cup \{*\}$, write $\J = \omega$.

Suppose $(G, G_n: n < \omega) \models \Omega_{\omega, 0}$. We define $G' = G \times \oplus_{n < \omega} (\oplus_{G_n} \mathbb{Z})$. For each $n < \omega$ let $G'_n = \oplus_{G_n} \mathbb{Z}$; let $G'_{*} = G$. Finally, define $\phi_n: G'_n \to G'$ to be the augmentation map $\oplus_{G_n} \mathbb{Z} \mapsto G_n$. Then clearly $(G', G'_i: i \in \I, \phi_j: j \in \J)$ works ($G = G'_{*}$ and each $G_n = \mbox{Im}(\phi_n)$).
\end{proof}

Note that to finish the proof of Theorem~\ref{Bireducibilities}, it suffices to show that $\Omega_{\omega, 0}^p \leq_B \mbox{TFAG}$. Indeed, we would then have that  for any countable index sets $\I, \J$ not both empty, $\mbox{TFAG} \leq_B \Omega_{\{0\}, 0}^p \leq_B \Omega_{\I, \J}^p \leq_B \Omega_{\omega, 0}^p \leq_B \mbox{TFAG}$, and thus these are all equivalent; and similarly, $\mbox{AG} \leq_B \Omega_{\I, \J} \leq_B \Omega_{\omega, 0} \leq_B \Omega_{\omega, 0}^p \leq_B \mbox{AG}$, and so these are also all equivalent. 

This remaining reduction is more involved than the others; the basic idea for it is due to Goodrick \cite{Goodrick}. To begin, we need the following lemma. The point is that if $G$ is a $p$-pure subgroup of $\oplus_\omega \mathbb{Z}_p$, then the isomorphism type of $(\mathbb{Z}_pG, G)$ depends only on the isomorphism type of $G$, where $\mathbb{Z}_p G$ is the $\mathbb{Z}_p$-submodule of $\oplus_\omega \mathbb{Z}_p$ generated by $G$.

\begin{lemma}\label{GroupTheoryLemma}
Suppose $G$ is a $p$-pure subgroup of $\oplus_\omega \mathbb{Z}_p$. Then there is a $\mathbb{Z}_p$-module isomorphism $\phi: (\mathbb{Z}_p \otimes G)/(p^\infty (\mathbb{Z}_p \otimes G)) \to \mathbb{Z}_p G$ which is the identity on $G$, where $\mathbb{Z}_p \otimes G$ is the tensor product (over $\mathbb{Z}$).
\end{lemma}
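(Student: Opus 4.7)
The plan is to build the isomorphism from the obvious multiplication map. Let $\mu: \mathbb{Z}_p \otimes G \to \mathbb{Z}_p G$ be the $\mathbb{Z}_p$-module homomorphism determined by $a \otimes g \mapsto ag$ (viewing $G \subseteq \oplus_\omega \mathbb{Z}_p$). This map is visibly surjective, since $\mathbb{Z}_p G$ is by definition the $\mathbb{Z}_p$-span of $G$. To produce $\phi$, I will show that $\ker(\mu) = p^\infty(\mathbb{Z}_p \otimes G)$, and then verify that the induced map fixes $G$ pointwise.

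For the easy containment, note that $\mathbb{Z}_p$ has trivial $p^\infty$-divisible part, hence so does $\oplus_\omega \mathbb{Z}_p$ (elements have finite support, and in each coordinate the value must be $p^\infty$-divisible in $\mathbb{Z}_p$). Therefore $\mathbb{Z}_p G$ has no nonzero $p^\infty$-divisible element, which forces $\mu\bigl(p^\infty(\mathbb{Z}_p \otimes G)\bigr) = 0$.

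The interesting direction uses $p$-purity. Take $x = \sum_{i=1}^n a_i \otimes g_i \in \ker(\mu)$ and fix $m < \omega$. For each $i$, choose $b_i \in \mathbb{Z}$ with $a_i - b_i \in p^m \mathbb{Z}_p$; write $a_i - b_i = p^m c_i$ with $c_i \in \mathbb{Z}_p$. Then
\[
x \;=\; \sum_i (a_i - b_i)\otimes g_i \;+\; 1\otimes\Bigl(\sum_i b_i g_i\Bigr) \;=\; p^m \sum_i c_i \otimes g_i \;+\; 1\otimes h,
\]
where $h := \sum_i b_i g_i \in G$. Since $\mu(x) = 0$, we have $h = \sum_i b_i g_i = \sum_i (b_i - a_i)g_i \in p^m(\oplus_\omega \mathbb{Z}_p)$. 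By $p$-purity of $G$ in $\oplus_\omega \mathbb{Z}_p$, this yields $h \in p^m G$, say $h = p^m h'$ with $h' \in G$. Then $1 \otimes h = p^m(1\otimes h')$, so $x \in p^m(\mathbb{Z}_p \otimes G)$. Since $m$ was arbitrary, $x \in p^\infty(\mathbb{Z}_p \otimes G)$.

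Hence $\mu$ descends to a $\mathbb{Z}_p$-module isomorphism $\phi: (\mathbb{Z}_p \otimes G)/p^\infty(\mathbb{Z}_p \otimes G) \to \mathbb{Z}_p G$. Identifying $g \in G$ with the class of $1 \otimes g$, we get $\phi(g) = \mu(1 \otimes g) = g$, so $\phi$ is the identity on $G$ as required. The main subtlety to watch is step three, specifically the interplay between approximating $p$-adic coefficients by integers in the tensor product and then invoking $p$-purity to pull the approximation back inside $G$; everything else is a routine verification.
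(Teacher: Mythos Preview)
Your proof is correct and follows essentially the same approach as the paper: both define the natural multiplication map $\mathbb{Z}_p \otimes G \to \mathbb{Z}_p G$, check the easy containment $p^\infty(\mathbb{Z}_p \otimes G) \subseteq \ker$, and for the reverse containment approximate each $p$-adic coefficient by an integer modulo $p^m$, then use $p$-purity to pull the resulting integer combination into $p^m G$. Your write-up is in fact slightly cleaner than the paper's.
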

\begin{proof}
Define $\psi(\gamma, a) = \gamma a$, going from $\mathbb{Z}_p \times G$ to $\mathbb{Z}_p G$. $\psi$ is clearly a $\mathbb{Z}$-bilinear map, so it induces a group homomorphism $\phi_0: \mathbb{Z}_p \otimes G \to \mathbb{Z}_p G$. Clearly $\phi_0$ is $0$ on $p^\infty (\mathbb{Z}_p \otimes G)$ so this induces a map $\phi:  (\mathbb{Z}_p \otimes G)/(p^\infty (\mathbb{Z}_p \otimes G)) \to \mathbb{Z}_p G$. We check this works. Clearly $\phi$ is surjective and the identity on $G$, and preserves the $\mathbb{Z}_p$-action. So it suffices to check the kernel of $\phi_0$ is $p^\infty (\mathbb{Z}_p \otimes G)$.

Given $\gamma \in \mathbb{Z}_p$ and $n < \omega$, let $\gamma \restriction_n \in \{0, \ldots, p^n-1\}$ be the unique element with $\gamma - \gamma \restriction_n \in p^n \mathbb{Z}_p$ (recall that $\mathbb{Z}_p$ is the completion of $\mathbb{Z}$ in the $p$-adic metric; so choose $(k_m: m < \omega)$ a sequence from $\mathbb{Z}$ converging to $\gamma$ and note that $k_m \mbox{ mod }p^n$ must eventually be constant).

Suppose $\sum_{i < n} \gamma_i a_i = 0$; we want to show $\sum_{i < n} \gamma_i \otimes a_i \in p^\infty(\mathbb{Z}_p \otimes G)$.  Note that for each $m$, $\sum_{i < n} \gamma_i a_i \in p^m (\oplus_\omega \mathbb{Z}_p)$. Hence, for each $m$, $b_m := \sum_{i < n} \gamma_i \restriction_m a_i \in p^m G$, using that $G$ is $p$-pure. Note that in $\mathbb{Z}_p \otimes G$, $ \sum_{i < n} \gamma_i \restriction_m \otimes a_i = 1 \otimes b_m$, since we can move all the $\gamma_i \restriction_m$'s to the right-hand side; and $1 \otimes b_m \in p^m(\mathbb{Z}_p \otimes G)$. Also, $1 \otimes b_m - \sum_{i < n} \gamma_i \otimes a_i \in (p^m \mathbb{Z}_p) \otimes G$, as it is equal to $\sum_{i < n} (\gamma_i \restriction_m - \gamma_i) \otimes a_i$. Thus $\sum_{i < n} \gamma_i \otimes a_i \in p^m(\mathbb{Z}_p \otimes G)$ for all $m$, as desired.
\end{proof}

Finally:

 \begin{lemma}~\label{HjorthThm}
$\Omega_{\omega, 0}^p \leq_B \mbox{TFAG}$.
 \end{lemma}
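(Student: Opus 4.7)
My plan follows the approach of Goodrick \cite{Goodrick}, using Lemma \ref{GroupTheoryLemma} to handle the $p$-adic completion subtleties. Fix once and for all a prime $p$ and a sequence $(\gamma_n : n < \omega) \subseteq \mathbb{Z}_p$ that is algebraically independent over $\mathbb{Q}$. Given a countable $(G, G_n : n < \omega) \models \Omega_{\omega, 0}^p$, first embed $G$ as a pure subgroup of $\bigoplus_\omega \mathbb{Z}_p$ in a Borel way (possible since $G$ is free abelian of countable rank), and view $G \subseteq V := \bigoplus_\omega \mathbb{Q}_p$. Set
\[
H \;:=\; G \;+\; \sum_{n < \omega} \mathbb{Z}[\tfrac{1}{p}]\,\gamma_n G_n \;\subseteq\; V.
\]
Then $H$ is torsion-free (as a subgroup of a $\mathbb{Q}_p$-vector space), and the assignment $(G, G_n) \mapsto H$ is Borel. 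This is the candidate reduction.

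The forward direction is routine: any isomorphism $\phi : (G, G_n) \to (G', G'_n)$ of marked structures extends $\mathbb{Q}_p$-linearly to $V \to V'$ and carries $H$ onto $H'$, since $\phi$ sends $G_n$ to $G'_n$ while the $\gamma_n$'s are fixed.

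The heart of the proof is the reverse direction: from an abelian-group isomorphism $\psi : H \to H'$, produce an isomorphism of marked structures. I plan to proceed in three steps. First, compute $p^\infty H := \bigcap_k p^k H$; I expect it to equal $\sum_n \mathbb{Z}[\tfrac{1}{p}]\gamma_n G_n$, with algebraic independence of the $\gamma_n$'s over $\mathbb{Q}$ forcing this sum to be direct. Since $\bigcap_k p^k G = 0$ (as $G$ is free), this gives $H = G \oplus p^\infty H$ as abstract abelian groups, and hence $G \cong H / p^\infty H$; so $\psi$ descends to an abstract-group isomorphism $G \cong G'$. Second, lift $\psi$ to a $\mathbb{Z}_p$-linear picture: Lemma \ref{GroupTheoryLemma} guarantees that $\mathbb{Z}_p G \subseteq \bigoplus_\omega \mathbb{Z}_p$ is canonically determined by the abstract group $G$, and an analogous passage for $H$ produces a $\mathbb{Z}_p$-module completion of $H$ that any abelian-group iso must respect. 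Third, identify each $G_n$ intrinsically as $\{a \in G : \gamma_n a \in H\}$, once the action of each $\gamma_n$ has been recognized inside the $\mathbb{Z}_p$-module structure.

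The principal obstacle is the third step: the $\gamma_n$'s have no a priori meaning in the abstract abelian group $H$, so the bulk of the work will be showing that algebraic independence of the $\gamma_n$'s, combined with Lemma \ref{GroupTheoryLemma}, forces any abelian-group isomorphism $\psi$ to preserve the $n$-indexed $\gamma_n$-decomposition of $p^\infty H$ rather than merely permuting the summands. Since isomorphism of $\Omega_{\omega, 0}^p$-structures is label-preserving, this rigidity is essential; it is the key technical payoff of replacing distinct primes (as in Hjorth's original construction) by $p$-adic integers, and I expect the argument to proceed by examining how $\mathbb{Z}_p$-scalar multiplication acts on coset representatives of $G$ inside the $\mathbb{Z}_p$-completion of $H$, reading off each $\gamma_n$ from the way it scales the $n$-th summand.
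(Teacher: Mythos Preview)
Your proposed group $H$ does not retain enough information: it is an \emph{internal direct sum} $H = G \oplus \bigoplus_n \mathbb{Z}[\tfrac{1}{p}]\gamma_n G_n$ inside $V$. Indeed, coordinate by coordinate, every element of $G$ lies in $\bigoplus_\omega \mathbb{Q}$, while every nonzero element of $\sum_n \mathbb{Z}[\tfrac{1}{p}]\gamma_n G_n$ has an irrational coordinate (by $\mathbb{Q}$-linear independence of $1, \gamma_0, \gamma_1, \ldots$), so the two pieces meet only in $0$; the summands $\mathbb{Z}[\tfrac{1}{p}]\gamma_n G_n$ are pairwise independent for the same reason. Hence as an abstract abelian group
\[
H \;\cong\; \Bigl(\bigoplus_\omega \mathbb{Z}\Bigr) \oplus \Bigl(\bigoplus_{\kappa} \mathbb{Z}[\tfrac{1}{p}]\Bigr), \qquad \kappa = \sum_n \mathrm{rk}(G_n),
\]
which records only $\kappa$. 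Concretely: take $G = \bigoplus_\omega \mathbb{Z}$ with $G_n = G$ for every $n$, and compare with $G'_0 = G$, $G'_n = 0$ for $n \geq 1$. These are non-isomorphic $\Omega^p_{\omega,0}$-structures, yet both produce $H \cong (\bigoplus_\omega \mathbb{Z}) \oplus (\bigoplus_\omega \mathbb{Z}[\tfrac{1}{p}])$. So the map is not a reduction.

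The step that cannot be salvaged is your ``$\mathbb{Z}_p$-linear picture.'' Lemma~\ref{GroupTheoryLemma} applies only to $p$-pure subgroups of $\bigoplus_\omega \mathbb{Z}_p$, and your $H$ is not even contained in $\bigoplus_\omega \mathbb{Z}_p$; the $p$-divisible part $p^\infty H$ admits no $\mathbb{Z}_p$-module structure at all, and applying $(\mathbb{Z}_p \otimes -)/p^\infty(\mathbb{Z}_p \otimes -)$ kills it outright, leaving $\bigoplus_\omega \mathbb{Z}_p$ with no trace of the $G_n$'s. The paper's construction instead takes the $p$-pure closure of $\bigcup_n \gamma_n G_n$ \emph{inside} $\bigoplus_\omega \mathbb{Z}_p$ (with $\gamma_0 = 1$ and $G_0$ taken to be the whole group). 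The output is then genuinely a $p$-pure subgroup of $\bigoplus_\omega \mathbb{Z}_p$, so Lemma~\ref{GroupTheoryLemma} recovers the ambient $\mathbb{Z}_p$-action canonically; one then checks, via unique representations $\sum_n \gamma_n p^{k(n)} b_n$ and the algebraic independence of the $\gamma_n$, that $a \in G_m$ if and only if both $a$ and $\gamma_m a$ lie in the output group. The point is that the $p$-pure closure is \emph{not} a direct sum of the pieces $\gamma_n G_n$, and this entanglement is exactly what encodes the subgroups.
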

 \begin{proof}
 Let $p$ be a prime.
 
  Let $(\gamma_n: 1 \leq n < \omega)$ be a sequence of algebraically-independent elements of $\mathbb{Z}_p$ over $\mathbb{Q}$, such that each $\gamma_n$ is a unit of $\mathbb{Z}_p$ (in particular is not divisible by $p$).  Write $\gamma_0 = 1$. Note then that $(\gamma_n: n < \omega)$ is linearly independent over $\mathbb{Q}$.

 Let $(\oplus_{\omega} \mathbb{Z}, G_n: n < \omega) \models \Omega^p_{\omega, 0}$; we can suppose $G_0 = G_1= \oplus_\omega \mathbb{Z}$. Let $G$ be the $p$-pure subgroup of $\oplus_\omega \mathbb{Z}_p$ generated by $\bigcup_{n < \omega} \gamma_n G_n$ (that is, close off under addition, inverses, and division by $p$ within $\oplus_\omega \mathbb{Z}_p$). We want to check that the map $\overline{G} \mapsto G$ works.
 
 First, suppose $(\oplus_\omega \mathbb{Z}, G_n: n < \omega) \cong (\oplus_\omega \mathbb{Z}, G'_n: n < \omega)$; we want to verify that the corresponding groups $G, G'$ are isomorphic. Let $\phi$ be the isomorphism. Then $\phi$ lifts canonically to an isomorphism $\phi^*: \oplus_\omega \mathbb{Z}_p \cong \oplus_\omega \mathbb{Z}_p$ (let $(e_i: i < \omega)$ be the standard basis of $\oplus_\omega \mathbb{Z}$, define $\phi^*(\sum_{i} \gamma_i e_i) = \sum_i \gamma_i \phi(e_i)$, where $(e_i: i < \omega)$ is the standard basis of $\oplus_\omega \mathbb{Z}$; more abstractly, $\phi^* = 1 \otimes \phi$ where we view $\oplus_\omega \mathbb{Z}_p = \mathbb{Z}_p \otimes \oplus_\omega \mathbb{Z}$). Then clearly $\phi^* \restriction_{G}$ is an isomorphism onto $G'$.
 
For the reverse it suffices, by Lemma~\ref{GroupTheoryLemma}, to show we can canonically recover each $G_n$ from $(\mathbb{Z}_pG, G)$.

Note that every $a \in G$ can be written as $\sum_{n < \omega} \gamma_n p^{k(n)} b_n$, when $k(n) \in \mathbb{Z}$, $b_n \in G_n$ with all but finitely many  $b_n = 0$, and $k(n) = 0$ whenever $b_n = 0$. (Not all such sums are in $G$; $G$ contains such sums which are additionally in $\oplus_\omega \mathbb{Z}_p$.) We call this a representation of $a$ if each $p \not| b_n$.  Then representations are unique: for suppose $\sum_{n < \omega} \gamma_n p^{k(n)} b_n = \sum_{n < \omega}  \gamma_n p^{k'(n)} b'_n$. Let $i \in \omega$; then we have $\sum_{n < \omega} \left( p^{k(n)} b_n(i) - p^{k'(n)} b'_n(i) \right) \gamma_n = 0$. By linear independence of $(\gamma_n: n < \omega)$ this implies each $p^{k(n)} b_n(i) = p^{k'(n)} b'_n(i)$. Since this holds for each $i$ we have each $p^{k(n)} b_n = p^{k'(n)} b'_n$. Then by divisibility assumptions we have that each $b_n = b'_n$ and so each $k(n) = k'(n)$.

 Suppose $f \in \mathbb{Z}_p G$ and let $1 \leq m < \omega$. It suffices to show that $a \in G_m$ if and only if $a \in G$ and $\gamma_m a \in G$: left to right follows from our assumption that $\gamma_0 = 1$. For right to left: let $\sum_{n < \omega} \gamma_n p^{k(n)} b_n$ be the representation of $a$, and let $\sum_{n < \omega} \gamma_n p^{k'(n)} b'_n$ be the representation of $\gamma_m a$. Let $i \in \omega$. Then $\sum_{n < \omega} \gamma_m \gamma_n p^{k(n)} b_n(i) = \sum_{n < \omega} \gamma_n p^{k'(n)} b'_n(i)$. Note that the only time $\gamma_m \gamma_n = \gamma_{k}$ is when $n = 0$, $k = m$. Thus by linear independence of $(\gamma_n: n < \omega)\,^\frown \,(\gamma_m \gamma_n: 1 \leq n < \omega)$ we have that $b_n = 0$ for all $n \not= 0$, and $b'_n = 0$ for all $n \not= m$. In particular, $a = p^k b$ for some $b \in G_m$. Since $\oplus_\omega \mathbb{Z}$ is $p$-pure in $\oplus_\omega \mathbb{Z}_p$ and since $G_m$ is $p$-pure in $\oplus_\omega \mathbb{Z}$, we have that $a \in G_m$.
 \end{proof}

\begin{remark}\label{RemarkOnStar} It is easy to add to the list in Theorem~\ref{Bireducibilities}. For instance, we can additionally insist that each $\phi_j$ is a pure embedding, i.e. preserves the divisibility relations.

A much stronger condition is the following: let $\Omega_{\I, \J}^*$ be $\Omega_{\I, \J}$ together with the second-order assertion saying, given $(G, G_i: i\in I, \phi_j: j \in J)$, that there is a basis $\mathcal{B}$ of $G$ (as a $\mathbb{Z}$-module) such that each $G_i$ is spanned by basis elements of $\mathcal{B}$ and each $\phi_j$ takes basis elements to basis elements. All of the known complexity of $\mbox{TFAG}$ is also present in $\Omega_{\omega, \{0\}}^*$; see the next section.

\end{remark}

Finally, we aim towards showing that whenever $R$ is a countable ring, then $R\mbox{-mod}$ (the theory of left $R$-modules) is Borel reducible to $\mbox{AG}$. This will not be used in the remainder of the paper.

\begin{definition}Suppose $\I, \J$ are countable index sets. Let $\Omega^{-}_{\I, \J}$ be the $\mathcal{L}_{\I, \J}$-theory such that $(G, +, G_i, \phi_j: i \in \I, j \in \J) \models \Omega^-_{\I, \J}$ if and only if:

\begin{itemize}
\item $(G, +)$ is an abelian group;
\item Each $G_i$ is a subgroup of $G$;
\item Each $\mbox{dom}(\phi_j)$ is either all of $G$ or else some $G_i$;
\item Each $\phi_j: \mbox{dom}(\phi_j) \to G$ is a homomorphism.
\end{itemize}
 So the only difference with $\Omega_{\I, \J}$ is that we are no longer requiring $G \equiv_{\infty \omega} \oplus_\omega \mathbb{Z}$.
\end{definition}

\begin{theorem}
For all countable index sets $\I, \J$, we have $\Omega^-_{\I, \J} \sim_B \mbox{AG}$.
\end{theorem}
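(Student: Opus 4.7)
The inequality $\mbox{AG} \leq_B \Omega^-_{\I, \J}$ is immediate: send a countable abelian group $G$ to the $\Omega^-_{\I, \J}$-structure on $G$ in which every $G_i$ is $\{0\}$ and every $\phi_j$ is the zero homomorphism with domain $G$. The substantive direction is $\Omega^-_{\I, \J} \leq_B \mbox{AG}$, which I will obtain via a three-link chain ending at the already-proven Theorem~\ref{Bireducibilities}.

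The first link kills the function symbols, reducing $\Omega^-_{\I, \J}$ to $\Omega^-_{\I^*, 0}$ for $\I^* = \I \sqcup \J \sqcup \{*_0, *_1, *_2\}$. Mirroring Lemma~\ref{BiredLemma3}, I will send $(G, G_i, \phi_j)$ to the structure on $G \times G$ whose distinguished subgroups are $G_i \times 0$ for $i \in \I$, $\mbox{graph}(\phi_j)$ for $j \in \J$, and the three starred subgroups $G_{*_0} := G \times 0$, $G_{*_1} := \{(a, a) : a \in G\}$, $G_{*_2} := 0 \times G$. The argument of Lemma~\ref{BiredLemma3} never uses that $G$ is free, so the construction defines a Borel map into $\Omega^-_{\I^*, 0}$. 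The forward direction of isomorphism preservation is immediate using $\alpha \times \alpha$; for the reverse, preservation of $G_{*_0}$ and $G_{*_2}$ forces any iso $\beta : G \times G \to G' \times G'$ to split as $\beta(a, b) = (\alpha_0(a), \alpha_2(b))$, and preservation of the diagonal $G_{*_1}$ then forces $\alpha_0 = \alpha_2 =: \alpha$. Reading off preservation of $G_i \times 0$ then gives $\alpha(G_i) = G_i'$, and preservation of $\mbox{graph}(\phi_j)$ gives $\alpha \circ \phi_j = \phi_j' \circ \alpha$.

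The second link resolves $G$ as a quotient of a free abelian group, reducing $\Omega^-_{\I^*, 0}$ to $\Omega_{\I^* \cup \{*\}, 0}$. Given $(G, G_i)$, let $F = \oplus_G \mathbb{Z}$ with basis $(e_g : g \in G)$, let $\pi : F \to G$ be the augmentation $e_g \mapsto g$, and let $K = \ker\pi$; send $(G, G_i)$ to $(F, K, \pi^{-1}(G_i) : i \in \I^*)$, using $K$ to populate the extra predicate $G_*$. Since $F$ is free of countable rank, this lands in $\Omega_{\I^* \cup \{*\}, 0}$. Any isomorphism $\alpha : G \to G'$ lifts canonically to $\alpha^* : F \to F'$ by $e_g \mapsto e_{\alpha(g)}$, which carries $K$ to $K'$ and $\pi^{-1}(G_i)$ to $(\pi')^{-1}(G_i')$. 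Conversely, any isomorphism of the targets preserves $K$ by assumption, hence descends to the quotient, giving an isomorphism $G \cong F/K \to F'/K' \cong G'$ under which $G_i = \pi^{-1}(G_i)/K$ maps to $G_i' = (\pi')^{-1}(G_i')/K'$.

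Composing with Theorem~\ref{Bireducibilities}, which gives $\Omega_{\I^* \cup \{*\}, 0} \leq_B \mbox{AG}$, yields $\Omega^-_{\I, \J} \leq_B \mbox{AG}$. The main new ingredient is the second link --- the ``free cover plus named kernel'' trick --- which generalizes the reduction $\mbox{AG} \leq_B \Omega_{\{0\}, 0}$ in Lemma~\ref{BiredLemma1}; the only real content is seeing that naming $K$ together with the lifted $G_i^*$ suffices to recover the isomorphism type of $(G, G_i)$. The secondary technicality is in the first link: Lemma~\ref{BiredLemma3} as stated uses only $G_{*_0}$ and $G_{*_1}$, but in the $\Omega^-$ setting, absent help from non-trivial $\phi_j$'s, I include the symmetric subgroup $G_{*_2} := 0 \times G$ to force target isomorphisms to respect the product decomposition.
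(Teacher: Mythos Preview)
Your proof is correct and follows essentially the same two-step strategy as the paper: first eliminate the function symbols by passing to $G\times G$ with graphs as named subgroups (the paper simply invokes Lemma~\ref{BiredLemma3} verbatim for $\Omega^-$), then resolve $G$ as $\oplus_G\mathbb{Z}$ modulo the augmentation kernel and name the preimages of the $G_i$ (your $\pi^{-1}(G_i)$ is exactly the paper's $G'_*+\oplus_{G_i}\mathbb{Z}$). The only differences are cosmetic: the paper uses $G_i=G$ and $\phi_j=\mathrm{id}$ for the easy direction, and does not add your extra subgroup $G_{*_2}=0\times G$---your inclusion of it is harmless over-caution that makes the backward verification of the first link a bit more transparent.
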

\begin{proof}
Clearly $\mbox{AG} \leq_B \Omega^-_{\I, \J}$. (Given $G \models \mbox{AG}$, let each $G_i = G$ and let each $\phi_j$ be the identity of $G$.) Also, we have by exactly the same argument as before that each $\Omega^-_{\I, \J} \leq_B \Omega^-_{\omega, 0}$. So it suffices to show that $\Omega^-_{\omega, 0} \leq_B \Omega_{\omega \cup \{*\}, 0}$.

Given $(G, G_n: n< \omega) \models \Omega^-_{\omega, 0}$ (that is, $G$ is an abelian group and each $G_n$ is a subgroup of $G$), write $G' = \oplus_G \mathbb{Z}$; let $G'_*$ be the kernel of the augmentation map $G' \to G$; and let $G'_n = G'_* + \oplus_{G_n} \mathbb{Z}$. Then $(G', G'_n: n < \omega, G'_*)$ works, using $G \cong G'/G_*$ via an isomorphism that takes each $G_n$ to $G'_n/G_*$.
\end{proof}

\begin{corollary}
Suppose $R$ is a countable ring. Then $R\mbox{-mod} \leq_B \mbox{AG}$. 
\end{corollary}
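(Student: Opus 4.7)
The plan is to reduce $R\mbox{-mod}$ directly to $\Omega^-_{\emptyset, R}$ and then invoke the immediately preceding theorem. Observe that a countable left $R$-module is tautologically the same data as an abelian group $(M,+)$ together with a family of endomorphisms $\phi_r: M \to M$ (namely $\phi_r(m) = rm$), one for each $r \in R$, subject to the usual module axioms. Since $R$ is countable, I take $\J = R$ as the countable index set of function symbols and $\I = \emptyset$; each $\phi_r$ is automatically a total group homomorphism, so $(M, +, \phi_r: r \in R)$ is a model of $\Omega^-_{\emptyset, R}$.

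I would define $f: \mbox{Mod}(R\mbox{-mod}) \to \mbox{Mod}(\Omega^-_{\emptyset, R})$ by $f(M) = (M, +, \phi_r: r \in R)$, keeping the underlying set fixed. This map is visibly Borel, since the graph of each $\phi_r$ is determined pointwise by the (Borel-coded) $R$-action on $M$. The main verification is that $f$ both preserves and reflects isomorphism: if $\sigma: M \to N$ is an $R$-module isomorphism, then $\sigma$ is an abelian-group isomorphism intertwining all the $\phi_r$'s, so it witnesses $f(M) \cong f(N)$; conversely, any $\mathcal{L}_{\emptyset, R}$-isomorphism $f(M) \cong f(N)$ is by definition an abelian-group isomorphism intertwining multiplication by every $r \in R$, hence an $R$-module isomorphism.

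Chaining with the preceding theorem would then give $R\mbox{-mod} \leq_B \Omega^-_{\emptyset, R} \sim_B \mbox{AG}$, completing the argument. I do not anticipate any genuine obstacle: the construction is the obvious forgetful/encoding map, and the previous theorem has already absorbed all the work of reducing abelian groups equipped with a countable family of extra subgroup predicates and endomorphism symbols down to plain abelian groups. The only step that requires even a moment's thought is noting that the choice $\I = \emptyset$ is allowed by the theorem (the hypothesis only asks that $\I, \J$ be countable, with no non-emptiness restriction in the $\Omega^-$ version), so no auxiliary $G_i$ needs to be introduced.
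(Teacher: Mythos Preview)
Your proposal is correct and essentially identical to the paper's proof: the paper also views an $R$-module $(M,+,\cdot_r:r\in R)$ as a model of $\Omega^-_{0,R}$ (the paper writes $0$ for $\emptyset$) and invokes the preceding theorem. You supply more detail on why the forgetful map is Borel and reflects isomorphism, but the argument is the same.
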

\begin{proof}
An $R$-module $(M, +, \cdot_r: r \in R)$ can be viewed as a model of $\Omega^-_{0, R}$, and this gives a reduction $R\mbox{-mod} \leq_B \Omega^-_{0, R}$.
\end{proof}

\section{Embedding Graphs into $\mbox{TFAG}$}\label{refinement}
In this section, we prove Theorem~\ref{EmbeddingWFFirst}: if there is no transitive model of $ZFC^- + \kappa(\omega) \mbox{ exists}$, then $\mbox{Graphs} \leq_{a\Delta^1_2} \mbox{TFAG}$. To begin, we introduce some terminology for colored trees.

\begin{definition}
A colored tree is a structure,  $(T, \leq, 0, c)$ where $(T, \leq)$ is a tree (of height at most $\omega$) with root $0$, and $c: T \to \omega$. We view these as model-theoretic structures, formally we can replace $c$ with a sequence of unary predicates. Let $\mbox{CT}$ be the theory of colored trees.

As notation, when we say $\mathcal{T}, \mathcal{S}$, etc. is a colored tree, then we will have $\mathcal{T} = (T, <_{\mathcal{T}}, 0_{\mathcal{T}}, c_{\mathcal{T}})$, $\mathcal{S} = (S, <_{\mathcal{S}}, 0_{\mathcal{S}}, c_{\mathcal{S}})$, etc., unless stated otherwise.

Suppose $\mathcal{T}$ and $\mathcal{T}'$ are two colored trees. Then say that $f: \mathcal{T} \leq \mathcal{T}'$ is an embedding of trees if for all $f(0_{\mathcal{T}}) = 0_{\mathcal{S}}$, and for all $s,t \in T$, $s \leq_{\mathcal{T}} t$ if and only if $f(s) \leq_{\mathcal{T'}} f(t)$, and also $c_{\mathcal{T}'}(f(s)) = c_{\mathcal{T}}(s)$. (We do not require that $f$ be injective.) Say that $\mathcal{T}$ and $\mathcal{T}'$ are tree-biembeddable ($\mathcal{T} \sim \mathcal{T}'$) if $\mathcal{T} \leq \mathcal{T}'$ and $\mathcal{T}' \leq \mathcal{T}$. (These definitions agree with the definitions form the introduction). 

If $\mathcal{T}$  and $t \in T$ then $\mbox{ht}(t)$ denotes its height in $\mathcal{T}$ (if there is ambiguity we will write $\mbox{ht}_{\mathcal{T}}(t)$). Let $\mathcal{T}_{\geq t}$ denote the subtree of all elements of $T$ bigger than or equal to $t$, with the induced coloring.

\end{definition}

We will now split the proof of Theorem~\ref{EmbeddingWFFirst} into two main subtheorems.

\begin{theorem}\label{ZTreesLemma}
There is a Borel map $f: \mbox{Mod}(\mbox{CT}) \to \mbox{Mod}(\mbox{TFAG})$ such that for all $\mathcal{T}, \mathcal{T}' \models \mbox{CT}$, if $\mathcal{T} \cong \mathcal{T}'$ then $f(\mathcal{T}) \cong f(\mathcal{T}')$, and if $f(\mathcal{T}) \cong f(\mathcal{T}')$ then $\mathcal{T} \sim \mathcal{T}'$. (In fact, we will get that for every $t \in T$, there is $t' \in T'$ of the same height with $\mathcal{T}_{\geq t} \sim \mathcal{T}'_{\geq t'}$, and conversely.) 
\end{theorem}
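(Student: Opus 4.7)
The plan is to factor through $\Omega_{\omega, 0}^p$: by Lemma~\ref{HjorthThm} it suffices to construct a Borel map $g : \mbox{Mod}(\mbox{CT}) \to \mbox{Mod}(\Omega_{\omega, 0}^p)$ with the same two properties (isomorphism in the forward direction, bi-embeddability with the subtree refinement in the backward direction) and then compose with the reduction of that lemma. Given a colored tree $\mathcal{T}$ with underlying set $T$, I would take $G_{\mathcal{T}}$ to be the free abelian group with basis $\{e_t : t \in T\}$ and equip it with two countable families of pure subgroups, reindexed along a bijection with $\omega$ to fit the $\Omega_{\omega, 0}^p$ template. For each pair $(n, c) \in \omega^{2}$, let $G_{n,c}^{\mathcal{T}}$ be the subgroup spanned by $\{e_t : \mbox{ht}_{\mathcal{T}}(t) = n,\ c_{\mathcal{T}}(t) = c\}$ (automatically pure as a span of basis vectors). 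For each triple $(n, c, c') \in \omega^{3}$, let $E_{n,c,c'}^{\mathcal{T}}$ be the subgroup generated by the parent-child differences $\{e_s - e_{\mbox{par}(s)} : \mbox{ht}(\mbox{par}(s)) = n,\ c(\mbox{par}(s)) = c,\ c(s) = c'\}$, which is also pure in $G_{\mathcal{T}}$ by a direct check using the tree structure.

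The forward direction is immediate: a tree isomorphism $\sigma : \mathcal{T} \cong \mathcal{T}'$ preserves height, color, and the parent relation, so the linear extension $e_t \mapsto e_{\sigma(t)}$ maps each marked subgroup onto its counterpart. For the backward direction, suppose $\phi : G_{\mathcal{T}} \to G_{\mathcal{T}'}$ is an isomorphism sending each marked subgroup onto the corresponding one. If $t \in T$ has type $(n, c)$, then $\phi(e_t) \in G_{n,c}^{\mathcal{T}'}$ is a $\mathbb{Z}$-linear combination of basis elements $e_{t'}$ of type $(n, c)$; pick canonically some such $t'$ with nonzero coefficient and set $\tau(t) = t'$, yielding a height- and color-preserving map $\tau : T \to T'$. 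For any child $s$ of $t$ of color $c'$, the membership $\phi(e_s - e_t) \in E_{n, c, c'}^{\mathcal{T}'}$ forces, via a coefficient-balancing argument in the free basis, that some child of $\tau(t)$ of color $c'$ occurs with nonzero coefficient in $\phi(e_s)$; refining the choice of $\tau(s)$ to such a child makes $\tau$ a tree embedding. Applying the same procedure to $\phi^{-1}$ gives the reverse embedding $\tau' : \mathcal{T}' \leq \mathcal{T}$, witnessing $\mathcal{T} \sim \mathcal{T}'$. For the refined claim, the subgroup $\langle e_u : u \geq t \rangle \leq G_{\mathcal{T}}$ together with its induced markings is (modulo the ancestor contribution of $t$) a copy of $g(\mathcal{T}_{\geq t})$; restricting $\phi$ to this subgroup yields an isomorphism onto the analogous subgroup of $G_{\mathcal{T}'}$, so the same analysis gives $\mathcal{T}_{\geq t} \sim \mathcal{T}'_{\geq \tau(t)}$.

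The main obstacle is making the coefficient-balancing argument for the edge subgroups robust enough that the canonical choices $\tau(s)$ for the various children $s$ of a given $t$ all land under a common $\tau(t)$, even when $\phi(e_t)$ spreads across several type-$(n, c)$ basis elements. The cleanest way to ensure this is probably to strengthen the edge markings by indexing pure subgroups by longer color-sequences, so that a whole ancestor chain of a node is encoded into a single tagged pure subgroup and is forced to map into an analogous chain on the other side; this keeps the index set countable and does not obstruct the forward direction. Once this rigidity step is established, the subtree refinement and the Borel-measurability of $g$ follow without further difficulty.
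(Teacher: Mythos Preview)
Your plan differs structurally from the paper's: you encode the tree using only pure subgroups (landing in $\Omega_{\omega,0}^p$), whereas the paper works in $\Omega_{\omega\times\omega,\{0\}}$, keeping a \emph{function symbol} $\pi_{\mathcal{T}}:\oplus_T\mathbb{Z}\to\oplus_T\mathbb{Z}$ defined by $\pi_{\mathcal{T}}(a)(t)=\sum_{s\in\mathrm{succ}(t)}a(s)$ (the parent map on basis vectors), together with your subgroups $G_{n,c}$ but no edge subgroups.

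The obstacle you flag is genuine and is not repaired by indexing along longer color sequences. Two nodes $t'_1,t'_2\in T'$ with identical color histories are indistinguishable by all your tagged subgroups, so nothing forbids $\phi(e_t)=e_{t'_1}+e_{t'_2}-e_{t'_3}$; then for distinct children $s_1,s_2$ of $t$ the edge constraint only forces each $\phi(e_{s_i})$ to involve children of \emph{some} $t'_j$, and it can happen that $\phi(e_{s_1})$ is supported above $t'_1$ while $\phi(e_{s_2})$ is supported above $t'_2$, blocking any single choice of $\tau(t)$. Your final paragraph also overclaims: there is no reason the group isomorphism $\phi$ should carry the basis subgroup $\langle e_u:u\geq t\rangle$ into a subgroup of the same form on the other side, so restricting $\phi$ does not give the subtree statement.

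The paper avoids building a tree map out of $\phi$ altogether. Using $\pi$, one can define for each nonzero $a$ in some $G_{\mathcal{T},\overline{i}}$ an intrinsic colored tree $\mathcal{T}^*_a$ whose elements are those $b$ with $\pi^m(b)=a$ lying in appropriate $G_{\mathcal{T},\overline{j}}$; since $\pi$ is part of the structure, $\mathcal{T}^*_a$ is an isomorphism invariant. The key computation is that $\mathcal{T}^*_a\sim\prod_{t\in\mathrm{supp}(a)}\mathcal{T}_{\geq t}$. From this one isolates the ``good'' $a$, namely those with $\mathcal{T}^*_a\sim\mathcal{T}_{\geq t}$ for some single $t\in\mathrm{supp}(a)$, and shows every $t$ arises this way. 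Hence the isomorphism type of $\mathcal{T}\otimes\mathbb{Z}$ determines $\{\mathcal{T}_{\geq t}/{\sim}:\mathrm{ht}(t)=n\}$ for each $n$, which is exactly the refined conclusion. If you want to salvage your subgroup-only approach, the natural move is to replace the attempt at a pointwise $\tau$ by an analogue of $\mathcal{T}^*_a$ defined via your edge subgroups; but making that invariant without the function $\pi$ is the real work, and the proposal as written does not do it.
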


\begin{theorem}\label{DisparateTreesLemma}Suppose there is no transitive model of $ZFC^- + \kappa(\omega) \mbox{ exists}$. Then there is  an absolutely $\Delta^1_2$-reduction $g:\mbox{Mod}(\mbox{Graphs}) \to \mbox{Mod}(\mbox{CT})$ such that whenever $G, G'  \in \mbox{Mod}(\mbox{Graphs})$, if $G \not \cong G'$ then then $f(G) \not \sim f(G')$.
\end{theorem}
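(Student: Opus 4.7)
My plan is to encode each countable graph $G$ as a colored tree $g(G)$ carrying enough back-and-forth information about $G$ that biembeddability of trees forces isomorphism of graphs. The skeleton of $g(G)$ is a Scott-style tree whose level-$n$ nodes are finite tuples $\bar{a}$ from $G$, whose edges are restriction, and whose coloring records the quantifier-free type $\mathrm{qftp}^G(\bar{a})$. The skeleton alone only gives biembeddability of graphs, so I would rigidify it by attaching at each node $\bar{a}$ a ``rank-marker'' colored subtree $\mathcal{R}_{\rho(G,\bar{a})}$, where $\rho(G,\bar{a})$ is a transfinite back-and-forth rank of the pair $(G,\bar{a})$ and $(\mathcal{R}_\alpha)_{\alpha \in \mathrm{Ord}}$ is a family of well-founded colored trees designed to be \emph{rigid under biembeddability}, meaning $\mathcal{R}_\alpha \sim \mathcal{R}_\beta$ implies $\alpha = \beta$. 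I would use disjoint color classes for the skeleton and the rank markers so that embeddings cannot swap them.

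Concretely, I would (i) define $(\mathcal{R}_\alpha)$ by transfinite recursion, for instance as colored trees of descending sequences from $\alpha$ with colorings chosen to control how embeddings behave; (ii) fix $\rho(G,\bar{a})$ as a Scott/Karp-style ordinal invariant satisfying $\rho(G,\bar{a}) = \rho(G',\bar{a}')$ if and only if $(G,\bar{a})$ and $(G',\bar{a}')$ are back-and-forth equivalent; and (iii) verify that $g$ is absolutely $\Delta^1_2$ by noting that both $\rho$ and $\mathcal{R}_\alpha$ are produced by transfinite recursions absolute between transitive models of $ZFC^-$, so that the graph of $g$ has matching $\Sigma^1_2$ and $\Pi^1_2$ definitions in any forcing extension.

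The main argument would then be: if $g(G) \sim g(G')$, then back-and-forth along the two tree-embeddings produces, at each $\bar{a}$ in $G$, a corresponding $\bar{a}'$ in $G'$ with the same quantifier-free type and (using rigidity of the rank markers) the same rank $\rho$, and vice versa. This assembles into a non-empty back-and-forth system of arbitrarily high rank between $G$ and $G'$, yielding full back-and-forth equivalence, and hence $G \cong G'$ since both are countable.

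The hard part will be proving the rigidity of $(\mathcal{R}_\alpha)$, and this is where the set-theoretic hypothesis is essential. Without such an assumption, Ramsey-style homogenization of tree embeddings at sufficiently large ordinals would collapse biembeddability classes and sabotage the scheme. The hypothesis that no transitive model of $ZFC^- + \kappa(\omega) \textrm{ exists}$ is tailored to block this: if $\mathcal{R}_\alpha \sim \mathcal{R}_\beta$ with $\alpha \neq \beta$ held, one could build, inside a countable transitive model of $ZFC^-$ containing the witnessing embeddings, an ordinal with the partition property $\kappa \to (\omega)^{<\omega}_2$, contradicting the hypothesis. Carrying out this extraction carefully---choosing the colorings so the Ramsey argument runs cleanly, and then transferring rigidity from countable transitive models to $\mathbb{V}$ and to all its forcing extensions via absoluteness---will be the main technical obstacle, and is precisely what fails when $\kappa(\omega)$ exists, matching the remark that Theorem~\ref{DisparateTreesLemma} breaks down in that case.
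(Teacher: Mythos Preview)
Your outline has the right flavor---attach rank markers to a Scott-style tree and use the set-theoretic hypothesis to make the markers rigid---but there are two genuine gaps.

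First, the rigidity you ask for, $\mathcal{R}_\alpha \sim \mathcal{R}_\beta \Rightarrow \alpha = \beta$, is too weak to run the argument and, more tellingly, holds \emph{unconditionally}: take $\mathcal{R}_\alpha$ to be the well-founded tree of finite descending sequences from $\alpha$. So this cannot be where the hypothesis enters. What the deduction actually needs is an \emph{antichain}: $\mathcal{R}_\alpha \not\leq \mathcal{R}_\beta$ whenever $\alpha \neq \beta$. At a non-root node $\bar a$, a single embedding $f: g(G) \leq g(G')$ gives only $\mathcal{R}_{\rho(G,\bar a)} \leq \mathcal{R}_{\rho(G',f(\bar a))}$; the reverse embedding $f'$ need not return $f(\bar a)$ to $\bar a$, so you never get a biembedding of the two markers, and mere biembeddability-rigidity yields nothing. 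The existence of arbitrarily long antichains of colored trees is exactly what is equivalent (in $\mathbb{L}$) to the nonexistence of $\kappa(\omega)$, and this is how the paper uses the hypothesis---directly, not by the contrapositive extraction you sketch.

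Second, your invariant $\rho$ cannot exist as specified. For $g(G)$ to land in $\mbox{Mod}(\mbox{CT})$ the marker $\mathcal{R}_{\rho(G,\bar a)}$ must be countable, so $\rho$ takes countable ordinal values; but you also demand that $\rho(G,\bar a) = \rho(G',\bar a')$ iff $(G,\bar a)$ and $(G',\bar a')$ are back-and-forth equivalent. That is an injection from $2^{\aleph_0}$ many isomorphism types into $\omega_1$, impossible under $\neg\mathrm{CH}$. The paper avoids factoring through ordinals: it takes the canonical Scott sentence $\mbox{css}(G)$ as a hereditarily countable \emph{set} $A$ and builds $\mathcal{T}_A$ from the $\in$-structure of $A$ itself---a well-founded skeleton of finite rank-descending sequences through $\mbox{tcl}(A\cup\{A\})$, with the $<_{\mathbb{L}}$-least antichain member $\mathcal{S}_{\mbox{rnk}(a_n)}$ attached at each node. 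The proof that $\mathcal{T}_A \sim \mathcal{T}_{A'} \Rightarrow A = A'$ then proceeds by showing, by induction on foundation rank and using the antichain property at each step, that the composed self-embedding $f' \circ f$ restricts to the identity on the skeleton.
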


We are essentially following Hjorth's proof of Theorem~\ref{HjorthFirst} in \cite{Hjorth1}, although Theorem~\ref{Bireducibilities} will make our life easier. The second author shows in \cite{BorelCompAndSB} that if $\kappa(\omega)$ exists, then the conclusion of Theorem~\ref{DisparateTreesLemma} fails.

Before proceeding, note that it suffices to establish Theorem~\ref{ZTreesLemma} and Theorem~\ref{DisparateTreesLemma}. Indeed, let $h = f \circ g: \mbox{Mod}(\mbox{Graphs}) \to \mbox{Mod}(\mbox{TFAG})$. Clearly $f \circ g$ has a $\Delta^1_2$ graph, and preserves isomorphism; we need to check this remains true in forcing extensions. Suppose $\mathbb{V}[G]$ is a forcing extension. By the definition of absolute $\Delta^1_2$-reduction, $g^{\mathbb{V}[G]}$ still makes sense, and is a reduction from $\mbox{Graphs}$ to $\mbox{CT}$. The remaining properties of $f, g$ are preserved by Shoenfield's absoluteness theorem.
\vspace{2 mm}

\noindent \emph{Proof of Theorem~\ref{ZTreesLemma}.}

 Suppose $\mathcal{T}= (T, <_T, c_T) \models \mbox{CT}$.  We define a model $\mathcal{T} \otimes \mathbb{Z}$ of $\Omega_{\omega \times \omega, \{0\}}$. ($f$ will be the function $\mathcal{T} \mapsto \mathcal{T} \otimes \mathbb{Z}$.)  Let the underlying group of $\mathcal{T} \otimes \mathbb{Z}$ be $\oplus_T \mathbb{Z}$;  define the group homomorphism $\pi_\mathcal{T}: \oplus_T \mathbb{Z} \to \oplus_T \mathbb{Z}$ by $\pi_\mathcal{T}(a)(t) = \sum_{s \in \mbox{\scriptsize{succ}}_\mathcal{T}(t)} a(s)$, where $\mbox{succ}_{\mathcal{T}}(t)$ denotes the set of all immediate successors of $s$ in $\mathcal{T}$. Viewing $T \subseteq \oplus_T \mathbb{Z}$, note that $\pi_{\mathcal{T}}(0_{\mathcal{T}}) = 0$, and for all $s \not= 0_{\mathcal{T}}$, $\pi_{\mathcal{T}}(s)$ is the immediate predecessor of $s$. For each $n, i< \omega$ write $G_{\mathcal{T}, n, i} = \oplus_{t} \mathbb{Z}$, where the sum is over all $t \in T$ of height $n$ and with $c_{\mathcal{T}}(t) = i$. Let $\mathcal{T} \otimes \mathbb{Z}$ be the structure $(\oplus_\mathcal{T} \mathbb{Z}, G_{\mathcal{T}, n, i}, \pi_\mathcal{T}: n, i < \omega)$.
 
Let $\mbox{CT} \otimes \mathbb{Z}$ be the $\Sigma^1_1$-sentence describing the closure under isomorphism of $\{\mathcal{T} \otimes \mathbb{Z}: \mathcal{T} \models \mbox{CT}\}$. 

Note that it is obvious that if $\mathcal{T}_1 \cong \mathcal{T}_2$ then $\mathcal{T}_1 \otimes \mathbb{Z} \cong \mathcal{T}_2 \otimes \mathbb{Z}$.

 Fix some countable $\mathcal{T} \models \mbox{CT}$. We perform some analysis on $\mathcal{T} \otimes \mathbb{Z}$; write $G= \oplus_T \mathbb{Z}$.

 For each $\overline{i} = (i_m: m < n+1) \in \omega^{n+1}$, let $G_{\mathcal{T}, \overline{i}}$ be the subgroup of all $a
 \in G$ such that for each $m \leq n$, $\pi^m(a) \in G_{\mathcal{T}, i_{n-m}}$. Also let $G_{\mathcal{T}, \emptyset} = 0$. Note that $\pi$ takes $G_{\mathcal{T}, \overline{i}}$ to $G_{\mathcal{T}, \overline{i} \restriction_n}$, also $G$ is the direct sum of the various $G_{\mathcal{T}, \overline{i}}$'s. Further, $G_{\mathcal{T}, \overline{i}}$ is spanned by $\{t \in T: \mbox{ht}(t) = n, \overline{c}_\mathcal{T}(t) = \overline{i}\}$, where $\overline{c}_\mathcal{T}(t) = (c_\mathcal{T}(t \restriction_0), c_\mathcal{T}(t \restriction_1), \ldots, c_\mathcal{T}(t))$. 

 For each $a \in G_{\mathcal{T}, \overline{i}}$ nonzero, let $T^*_a$ denote the set of all $b$ such that for some $\overline{i} \subseteq \overline{j}$, $b \in G_{\mathcal{T}, \overline{j}}$ and $\pi_\mathcal{T}^{\lg(\overline{j})-\lg(\overline{i})}(b) = a$. If we define $c^*_a(b) = \overline{j}(\lg(\overline{j})-1)$, and if we let $b \leq_f b'$ if and only if some $\pi^m(b') = b$, then $(T^*_a, \leq_a, c^*_a) = \mathcal{T}^*_a$ is a colored tree.

We need to characterize the colored trees $\mathcal{T}^*_a$ up to biembeddability. This will be done in terms of products of trees:

\begin{definition}
If $(\mathcal{S}_k: k < k_*)$ are colored trees, then by the product $\prod_{k < k_*} \mathcal{S}_k$, we mean the colored tree whose elements are all sequences $(s_k: k < k_*)$, where for some $n < \omega$, each $s_k$ has height $n$, and for some $(i_m: m \leq n) \in \omega^{n+1}$, we have for all $m \leq n$, $c_{\mathcal{S}_k}(s_k \restriction_m) = i_m$. Then we define the color of $(s_k: k < k_*)$ to be $i_n$. 
\end{definition}

Clearly, $\prod_{k < k_*} \mathcal{S}_k \leq \mathcal{S}_{k'}$ for each $k' < k_*$, via projection onto the $k'$-factor. In fact, $\mathcal{T} \leq \prod_{k < k_*} \mathcal{S}_k$ if and only if $\mathcal{T} \leq \mathcal{S}_k$ for each $k < k_*$. This is because if $\mathcal{T} \leq \prod_{k < k_*} \mathcal{S}_k$, then we can compose with the projection maps to get $\mathcal{T} \leq \mathcal{S}_k$ for each $k$; and if $f_k:\mathcal{T} \leq \mathcal{S}_k$ for each $k < k_*$, we can define $f: \mathcal{T} \leq \prod_{k < k_*} \mathcal{S}_k$ via $f(t) = (f_k(t): k < k_*)$.

 \vspace{1 mm}
 
 \noindent \textbf{Claim 1.} Suppose $a \in G_{\mathcal{T}, \overline{i}}$ is nonzero; enumerate $\mbox{supp}(a) = \{t_k: k < k_*\}$. (Here, we are viewing $a \in \oplus_T \mathbb{Z}$ as a function from $T$ to $\mathbb{Z}$ of finite support $\mbox{supp}(a)$.) Then $\mathcal{T}^*_a \sim \prod_{k < k_*} \mathcal{T}_{\geq t_k}$. 
 \begin{proof}First we will define an embedding $f: \mathcal{T}^*_a \leq \prod_{k < k_*} \mathcal{T}_{\geq t_k}$. We will define $f(b)$ inductively on the height of $b \in \mathcal{T}^*_a$; our inductive hypothesis will be that $f(b) = (t_k: k < k_*)$ is a sequence from $\mbox{supp}(b)$, and if we let $\overline{i}$ be such that $b \in G_{\mathcal{T}, \overline{i}}$, then each $\overline{c}_{\mathcal{T}}(t_k) = \overline{i}$. 
 
 So we are given $b$ and $f(b) = (t_k: k < k_*)$. Suppose $i < \omega$ and $c \in G_{\mathcal{T}, \overline{i} i}$ satisfies that $\pi_{\mathcal{T}}(c) = b$. Then $\pi_{\mathcal{T}}[\mbox{supp}(c)] \supseteq \mbox{supp}(b)$, so for each $k < k_*$ we can find $s_k \in \mbox{supp}(c)$ with $\pi_{\mathcal{T}}(s_k) = t_k$. Clearly then we can define $f(c) = (s_k: k < k_*)$, and continue.

 For the reverse embedding $\prod_{k < k_*} \mathcal{T}_{\geq t_k} \leq \mathcal{T}^*_a$, write $a= \sum_{k < k_*} \lambda_k t_k$, and send $(s_k: k < k_*) \in \prod_{k < k_*} \mathcal{T}_{\geq t_k}$ to $\sum_{k < k_*} \lambda_k s_k \in \mathcal{T}^*_a$. 
 \end{proof}

 Given an $\omega$-labeled tree $\mathcal{S}$, let $G_{\mathcal{T}, \overline{i}, \mathcal{S}}$ be the set of all $a \in G_{\mathcal{T}, \overline{i}}$ such that $\mathcal{S} \leq T^*_a$, along with $a = 0$. From the preceding claim it is clear that $G_{\mathcal{T}, \overline{i},\mathcal{S}}$ is a subgroup of $G_{\mathcal{T}, \overline{i}}$. Also, let $G_{\mathcal{T}, \overline{i}, >\mathcal{S}}$ be the sum of all $G_{\mathcal{T}, \overline{i}, \mathcal{S}'}$, for $\mathcal{S} <^{\mbox{\small{ct}}} \mathcal{S}'$. 

 Note that if $a \in G_{\mathcal{T}, \overline{i}}$, then always $a \in G_{\mathcal{T}, \overline{i}, \mathcal{T}^*_a}$, but sometimes also $a \in G_{\mathcal{T}, \overline{i}, >\mathcal{T}^*_a}$. Say that $a$ is good if this is not the case, i.e. $ a\in G_{\mathcal{T}, \overline{i}, \mathcal{T}^*_a} \backslash G_{\mathcal{T}, \overline{i}, > \mathcal{T}^*_a}$.
 
 \vspace{1 mm}
 
 \noindent \textbf{Claim 2.} Suppose $a \in G_{\mathcal{T}, \overline{i}}$. Then $a$ is good if and only if $a$ is nonzero, and there is some $t \in \mbox{supp}(a)$ such that $\mathcal{T}^*_{a} \sim \mathcal{T}_{\geq t}$.
 
 \begin{proof}
 Enumerate $\mbox{supp}(a) = \{t_k: k < k_*\}$, and write $a = \sum_{k < k_*} \lambda_k t_k$. Then by Claim 1, $\mathcal{T}^*_{a} \leq \prod_{k < k_*} \mathcal{T}_{\geq t_k}$, so $\mathcal{T}^*_{a} \leq \mathcal{T}_{\geq t_k}$ for each $k < k_*$.
 
 If $a$ is good, then we cannot have each $\mathcal{T}^*_{a} < \mathcal{T}_{\geq t_k}$, so some $\mathcal{T}_{\geq t_k} \sim \mathcal{T}^*_{a}$ as desired. For the converse, suppose $t \in \mbox{supp}(a)$ satisfies that $\mathcal{T}^*_{a} \sim \mathcal{T}_{\geq t}$ . Suppose we write $a= \sum_{i < i_*} b_i$. Then $t \in \mbox{supp}(b_i)$ for some $i < i_*$. By Claim 1, $\mathcal{T}^*_{b_i} \leq \mathcal{T}_{\geq t}$, and thus $\mathcal{T}^*_{b_i} \not > \mathcal{T}_{\geq t} \sim \mathcal{T}^*_{a}$.
 \end{proof}

 In particular, if $a \in G_{\mathcal{T}, \overline{i}}$ is good then $\mathcal{T}^*_a \sim \mathcal{T}_{\geq t}$ for some $t \in T$, and so we can recover $\{\mathcal{T}_{\geq t} / \sim: t \in T, \mbox{ht}(t) = n\}$ from the isomorphism class of $\mathcal{T} \otimes \mathbb{Z}$, for each $n$. This concludes the proof of Theorem~\ref{ZTreesLemma}.

\vspace{2 mm}

Before continuing on to the proof of Theorem~\ref{DisparateTreesLemma}, we need some set-theoretic observations. 

First, we note that various familiar facts about $\kappa(\omega)$ continue to hold when the ambient set theory is just $ZFC^-$ (less suffices as well).  Recall that a cardinal $\kappa$ (in a model of $ZFC$) is totally indescribable if for every $n$, for every sentence $\phi$ in the language of set theory with an extra relation symbol, and for every $R \subseteq \mathbb{V}_\kappa$ with $(\mathbb{V}_{\kappa + n}, \in, R) \models \phi$, there is an $\alpha < \kappa$ such that $(\mathbb{V}_{\alpha + n}, \in, R \cap \mathbb{V}_\alpha) \models \phi$. This is a large cardinal notion; it implies that $\kappa$ is weakly compact. In fact, weak compactness is equivalent to this condition when restricted to $n = 1$ (see Theorem 6.4 of Kanamori \cite{Kanamori}, due to Hanf and Scott).

\begin{lemma}Work in $ZFC^-$.

\begin{itemize}
\item[(A)] Suppose $\kappa \rightarrow (\omega)^{<\omega}_2$ and $N$ is a transitive model of $ZFC^-$ containing $\kappa$ (possible a proper class). Then $(\kappa \rightarrow (\omega)^{<\omega}_2)^N$.
\item[(B)] If $\mathbb{V} = \mathbb{L}$ (we really just need global choice), and if $\kappa(\omega)$ exists, then $\kappa(\omega)$ is inaccessible (i.e., $\kappa(\omega)$ is a regular cardinal, and for all $\alpha < \kappa(\omega)$, $\mathcal{P}(\alpha)$ exists and has cardinality less than $\kappa(\omega)$). Thus, $\mathbb{L}_{\kappa(\omega)} = \mathbb{V}_{\kappa(\omega)}$ is a set model of $ZFC$.
\item [(C)] If $\mathbb{V} = \mathbb{L}$ and if $\kappa(\omega)$ exists, then $\mathbb{V}_{\kappa(\omega)} \models ``$There exist totally indescribable cardinals."
\item[(D)] If $\mathbb{V} = \mathbb{L}$, then $\kappa(\omega)$ is the least cardinal $\kappa$ such that whenever $f: [\kappa]^{<\omega} \to 2$, there is an increasing sequence $(\alpha_n: n < \omega)$ from $\kappa$ such that for all $n$, $f(\alpha_0, \ldots, \alpha_{n-1}) = f(\alpha_1, \ldots, \alpha_n)$.
\item[(E)] If $\mathbb{V} = \mathbb{L}$, then $\kappa(\omega)$ is the least cardinal $\kappa$ such that there is no antichain $(\mathcal{T}_\alpha: \alpha < \kappa(\omega))$ of $\omega$-colored trees; by an antichain I mean that for all $\alpha < \beta < \kappa(\omega)$, $\mathcal{T}_\alpha \not \leq \mathcal{T}_\beta$ and $\mathcal{T}_\beta \not \leq \mathcal{T}_\alpha$. (If $\kappa(\omega)$ does not exist then we just mean that for every cardinal $\kappa$, there is an antichain of length $\kappa$.)
\end{itemize}

\end{lemma}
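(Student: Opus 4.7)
I address the five parts in turn; parts (A) and (D) admit direct arguments, while parts (B), (C), and especially (E) rely on classical facts about $\kappa(\omega)$.

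For (A), the plan is to apply absoluteness of well-foundedness to a naturally associated tree. Given $f \colon [\kappa]^{<\omega} \to 2$ in $N$, define inside $N$ the tree $T$ of finite increasing $f$-homogeneous sequences from $\kappa$, ordered by end-extension. An $\omega$-branch through $T$ corresponds exactly to an infinite $f$-homogeneous subset of $\kappa$. Since $\kappa \to (\omega)^{<\omega}_2$ in $V$ and $T$ is a set lying in $N \subseteq V$, $T$ is ill-founded in $V$. Ill-foundedness of a set relation is absolute between transitive models of $ZFC^-$, so $T$ is ill-founded in $N$ as well, yielding the desired homogeneous set in $N$.

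For (B) and (C), work in $V = L$. Regularity of $\kappa(\omega)$ follows from a diagonalization: if $\kappa(\omega) = \sup_{\xi < \lambda} \mu_\xi$ with $\lambda, \mu_\xi < \kappa(\omega)$, combining bad colorings on the $\mu_\xi$'s via a fixed enumeration of the cover yields a bad coloring on $\kappa(\omega)$, contradicting the partition property. Since GCH holds in $L$, $|\mathcal{P}(\alpha)| = \alpha^+ < \kappa(\omega)$ for all $\alpha < \kappa(\omega)$, so $\kappa(\omega)$ is inaccessible and $V_{\kappa(\omega)} = L_{\kappa(\omega)} \models ZFC$. For (C), I invoke the standard reflection fact (see Kanamori, Ch.~7) that the partition property $\kappa \to (\omega)^{<\omega}_2$ forces the set of totally indescribable cardinals below $\kappa$ to be stationary; hence such cardinals exist below $\kappa(\omega)$ and are witnessed inside $V_{\kappa(\omega)}$.

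For (D), one direction is immediate: a fully $f$-homogeneous $\omega$-sequence is automatically shift-homogeneous, so $\kappa(\omega)$ witnesses the shift property. Conversely, suppose $\kappa$ has the shift property and let $f \colon [\kappa]^{<\omega} \to 2$; pick a shift-homogeneous sequence $(\alpha_n : n < \omega)$ and define $g_k \colon [\omega]^k \to 2$ by $g_k(i_0 < \cdots < i_{k-1}) = f(\alpha_{i_0}, \ldots, \alpha_{i_{k-1}})$. Apply the classical Ramsey theorem $\omega \to (\omega)^k_2$ to each $g_k$ and diagonalize to obtain an infinite $I \subseteq \omega$ such that $(\alpha_i : i \in I)$ is fully $f$-homogeneous. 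So the least cardinal with the shift-homogeneous property equals $\kappa(\omega)$.

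For (E), the two directions have different flavors. To produce, for each $\lambda < \kappa(\omega)$, an antichain of length $\lambda$, fix a bad coloring $f \colon [\lambda]^{<\omega} \to 2$ witnessing $\lambda \not\to (\omega)^{<\omega}_2$ and attach to each $\alpha < \lambda$ a colored tree $\mathcal{T}_\alpha$ whose branches encode the behavior of $f$ on increasing finite tuples containing $\alpha$; an embedding $\mathcal{T}_\alpha \leq \mathcal{T}_\beta$ would produce a large $f$-homogeneous set, forcing pairwise incomparability. The reverse direction --- that no antichain of length $\kappa(\omega)$ exists --- is the main obstacle and uses the partition property essentially. Given $(\mathcal{T}_\alpha : \alpha < \kappa(\omega))$, I would apply $\kappa(\omega) \to (\omega)^{<\omega}_2$ to a coloring recording, on each finite tuple $\alpha_0 < \cdots < \alpha_{k-1}$, the complete finite embeddability pattern among $\mathcal{T}_{\alpha_0}, \ldots, \mathcal{T}_{\alpha_{k-1}}$ (coded into $2$ via a fixed bijection). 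The resulting homogeneous $\omega$-subsequence has totally uniform finite embeddability structure, and from this uniformity one constructs an embedding between two members by an $\omega$-step back-and-forth amalgamation, contradicting the antichain assumption. Carrying out this amalgamation carefully --- ensuring that local embeddings can be coherently extended using the indiscernibility --- is the delicate step.
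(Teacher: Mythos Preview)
Your treatments of (A)--(C) are fine and more explicit than the paper, which simply attributes them to Silver and refers to Kanamori.

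Your argument for (D) has a genuine gap. After choosing a shift-homogeneous sequence $(\alpha_n)$ for $f$, you pull $f$ back to colorings $g_k$ on $[\omega]^k$, apply finite Ramsey to each, and diagonalize. But the shift property only constrains the $g_k$ on \emph{consecutive} $k$-tuples $(m,m+1,\ldots,m+k-1)$; your Ramsey-and-diagonalize step never uses it again. If that step worked as stated it would prove $\omega \to (\omega)^{<\omega}_2$, which is false (e.g.\ color $s$ by whether $\min(s) < |s|$). Concretely, the diagonal set $I=\{i_0<i_1<\cdots\}$ you produce only has $g_k$ constant on $k$-tuples whose least element lies past $i_{k-1}$; the early tuples are uncontrolled, and shift-homogeneity for $f$ gives no leverage there. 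Silver's actual argument applies the shift hypothesis to a more informative coloring (one that on an $n$-tuple records the $f$-values of all its subtuples, reducing to $2$ colors by the same coding trick you use in (E)), and then uses a further model-theoretic step; the paper does not spell this out either, citing Silver.

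For (E), your forward direction is essentially the paper's: both manufacture $\mathcal{T}_\alpha$ from a bad coloring so that an embedding would yield a forbidden sequence, and the paper additionally adjoins a well-founded piece (descending sequences below $\alpha$, with a fresh color) to rule out embeddings in the other direction. The backward direction is where you diverge. You propose to color finite tuples of indices by the ``finite embeddability pattern'' among the corresponding trees and run a back-and-forth on a homogeneous subsequence; you flag the amalgamation as delicate, and indeed making this precise when the trees are uncountable is not routine. The paper's route is cleaner and explains the hypothesis $V=L$: with the global well-order one has canonical Skolem functions on $(V_{\kappa(\omega)},\in,(\mathcal{T}_\alpha)_\alpha)$, and $\kappa(\omega)\to(\omega)^{<\omega}_2$ yields an infinite set of order-indiscernibles $\{\alpha_n\}$ for this structure. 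The shift map sending the element of $\mathcal{T}_{\alpha_0}$ given by a Skolem term $t(\alpha_0,\ldots,\alpha_k)$ to $t(\alpha_1,\ldots,\alpha_{k+1})\in \mathcal{T}_{\alpha_1}$ is then, by indiscernibility, a well-defined colored-tree embedding, contradicting the antichain hypothesis in one line.
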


Note that Corollary~\ref{ConsistentlyTFAGIsBadFirst} follows from Theorem~\ref{EmbeddingWFFirst} and (B). (C) provides a strengthening: it is consistent with $ZFC + \mbox{``There is a totally indescribable cardinal"}$ that $\mbox{Graphs} \leq_{a\Delta^1_2} \mbox{TFAG}$.
\begin{proof}
All of these are routine modifications of the case where the ambient set theory is $ZFC$. In the context of $ZFC$: (A) and (D) are due to Silver \cite{Silver1}. (B) is also due to Silver \cite{Silver2}, or see Corollary 7.6 of Kanamori \cite{Kanamori}. (C) is due to Silver and Reinhardt, see Exercise 9.18 of \cite{Kanamori}. (E) is due to Shelah \cite{ShelahTrees}; we provide a sketch of the proof.

First suppose $\kappa < \kappa(\omega)$. Choose some $f: [\kappa]^{<\omega} \to 2$ failing (D). For each $\alpha < \kappa$, we define a colored tree $\mathcal{T}_\alpha$ as follows. Namely, let $T_\alpha$ be all finite increasing sequences of ordinals from $\kappa$ whose first term is $\alpha$; let $<_{\mathcal{T}_\alpha}$ be initial segment. Let $c_{\mathcal{T}_\alpha}(s) = f(s)$. Let $\mathcal{S}_\alpha$ be $\mathcal{T}_\alpha$ together with the tree of descending sequences from $\alpha$, with the new elements all colored $2$.

Note that for all $\alpha_0 < \alpha_1 < \kappa$, $\mathcal{T}_{\alpha_0} \not \leq \mathcal{T}_{\alpha_1}$, as given an embedding $\rho: \mathcal{T}_\alpha \leq \mathcal{T}_\beta$, we can inductively find $\alpha_n: n < \omega$ such that for all $n$, $\rho(\alpha_i: i <n) = (\alpha_i: 1 \leq i \leq n+1)$; but this clearly contradicts the hypothesized property of $f$. From this it follows that $(\mathcal{S}_\alpha: \alpha < \kappa)$ is the desired antichain.

In the other direction, suppose $(\mathcal{T}_\alpha: \alpha < \kappa(\omega))$ is a sequence of colored trees. Write $\kappa = \kappa(\omega)$; choose an elementary substructure $H \leq (\mathbb{V}_\kappa, \ldots)$ (using $<_{\mathbb{L}}$) such that $H$ is the Skolem hull of an infinite set of indiscernible ordinals $\{\alpha_n: n < \omega\}$. Then it is easy to check that $\mathcal{T}_{\alpha_0} \leq \mathcal{T}_{\alpha_1}$.
\end{proof}

We can now finish.

\vspace{2 mm}

\noindent \emph{Proof of Theorem~\ref{DisparateTreesLemma}.}

Suppose $A$ is a hereditarily countable set. We describe a colored tree $\mathcal{T}_{A} = (T_A, <_A, c_A)$, and then show that for all $A \not= A'$ then $\mathcal{T}_A \not \sim \mathcal{T}_{A'}$. Moreover, the operation $A \mapsto \mathcal{T}_A$ will be absolute to transitive models of $ZFC^-$. 

Before proceeding, we indicate how we finish. Given a graph $G \in \mbox{Mod}(\mbox{Graphs})$, let $g(G)$ be the $<_{\mathbb{L}[G]}$-least element of $\mbox{Mod}(\mbox{CT})$ which is isomorphic to $\mathcal{T}_{\mbox{css}(G)}$, where $\mbox{css}(G)$ is the canonical Scott sentence of $G$. (Note that $\mathcal{T}_{\mbox{\css}(G)} \in (\HC)^{\mathbb{L}[G]}$ since $(\HC)^{\mathbb{L}[G]} \models ZFC^-$, so $\mathcal{T}_{\mbox{css}(G)}$ does have models with universe $\omega$ in $\mathbb{L}[G]$.) Clearly, for any $G, G' \models \mbox{CT}$, if $G \cong G'$ then $\mbox{css}(G) = \mbox{css}(G')$ so $g(G) = g(G')$, and if $G \not \cong G'$ then $\mbox{css}(G) \not= \mbox{css}(G')$ and so $g(G) \not \sim g(G')$. To finish, note that $g$ is an absolutely $\Delta^1_2$-reduction, since it is computed correctly in any countable transitive model of $ZFC^-$.

So we define $A \mapsto \mathcal{T}_A$. Let $A$ be given, and let $\alpha = \mbox{rnk}(A)$, where $\mbox{rnk}$ is foundation rank. Let $(\mathcal{S}_\beta: \beta \leq \alpha)$ be the $<_{\mathbb{L}}$-least antichain of colored trees indexed by $\alpha+1$. This is computed correctly in any transitive model of $ZFC^-$, since if $M$ is any transitive model of $ZFC^-$ with $\alpha \in M$, then $\mathbb{L}^M$ does not believe that $\kappa(\omega)$ exists, and so $\mathbb{L}^M$ can find a $<_{\mathbb{L}^M}$-least sequence $(\mathcal{S}_\beta: \beta \leq \alpha)$ such that $\mathbb{L}^M \models (\mathcal{S}_\beta: \beta \leq \alpha)$ is an antichain. But the property of being an antichain of colored trees of length $\alpha+1$ is absolute to models of $ZFC^-$; thus $(\mathcal{S}_\beta: \beta \leq \alpha)$ is the $<_{\mathbb{L}}$-least antichain of colored trees indexed by $\alpha+1$.

We define a preliminary colored tree $\mathcal{T}_{0, A} = (T_{0, A}, <_{0, A}, c_{0, A})$. Let $(T_{0, A}, <_{0, A})$ be the tree of all nonempty finite sequences $(a_0, \ldots, a_{n})$ from $\mbox{tcl}(A \cup \{A\})$ such that $a_0 = A$ and $\mbox{rnk}(a_0) > \mbox{rnk}(a_1) > \ldots > \mbox{rnk}(a_n)$. Given $(a_0, \ldots, a_n) \in T_{0, A}$, let $c_{0, A}(a_0, \ldots, a_n) = 0$ if $a_{n-1} \in a_n$, and $c_{0, A}(a_0, \ldots, a_n) = 1$ otherwise. Let $\mathcal{T}_A$ be obtained from $\mathcal{T}_{0, A}$ as follows: above each $(a_0, \ldots, a_n) \in T_{0, A}$, put a copy of $(S_\beta, <_{S_\beta})$, where $\beta$ is the foundation rank of $a_n$; given $t \in S_\beta$, let the color of the copy of $t$ above $(a_0, \ldots, a_n)$ be $c_{\mathcal{S}_\beta}(t) + 2$.

Suppose $\mathcal{T}_A \sim \mathcal{T}_{A'}$. Let $\alpha = \mbox{rnk}(A)$ and let $\alpha' = \mbox{rnk}(A')$. Let $f: \mathcal{T}_A \leq \mathcal{T}_{A'}$ and $f': \mathcal{T}_{A'} \leq \mathcal{T}_A$ witness that $\mathcal{T}_A \sim \mathcal{T}_{A'}$. Note that $f \restriction_{T_{A, 0}}$ and $f' \restriction_{T_{A', 0}}$ witness that $\mathcal{T}_{A, 0}$ and $\mathcal{T}_{A', 0}$ are biembeddable; since $\mathcal{T}_{A, 0}$ is well-founded of rank $\alpha$, and $\mathcal{T}_{A', 0}$ is well-founded of rank $\alpha'$, this implies $\alpha = \alpha'$. Let $(\mathcal{S}_\beta: \beta \leq \alpha)$ be as above.

Now, consider the embedding $h := f' \circ f: \mathcal{T}_A \leq \mathcal{T}_A$. I claim that $h \restriction_{\mathcal{T}_{0, A}}$ must be the identity. This suffices, since it implies $\mathcal{T}_{0, A} \cong \mathcal{T}_{0, A'}$ and hence $A = A'$.

Suppose $(a_0, \ldots, a_n) \in T_{0, A}$; write $\beta = \mbox{rnk}(a_n)$ and write $h(a_0, \ldots, a_n)= (b_0, \ldots, b_n)$. We show by induction on $\beta$ that $a_n = b_n$; this suffices. Note that $\mathcal{S}_\beta \leq \mathcal{S}_{\mbox{rnk}(b_n)}$, and hence $\mbox{rnk}(b_n) = \beta$ also (this is the key point!).

If $\beta = 0$, then $a_n = b_n = \emptyset$. Suppose we have verified the claim for all $\gamma < \beta$. We show that for every $a \in \mbox{tcl}(A \cup \{A\})$ with $\mbox{rnk}(a) < \beta$, we have that $a \in a_n$ if and only if $a \in b_n$. Indeed, suppose $a$ is given. Write $h(a_0, \ldots, a_n, a) = (b_0, \ldots, b_n, b)$. By construction of the coloring, we have that $a \in a_n$ if and only if $b \in b_n$; but by the inductive hypothesis, we have that $a = b$.

\section{Schr\"{o}der-Bernstein Properties for $\mbox{TFAG}$}\label{SBFailure}

We repeat a bit from the introduction.

\begin{definition}
Suppose $M, N$ are $\mathcal{L}$-structures. Then $f: M \leq N$ is an embedding if $f$: whenever $R$ is a relation symbol of $\mathcal{L}$ then $f[R^M] \subseteq [R^N]$, and whenever $F$ is a function symbol of $\mathcal{L}$ then $f \circ F^M = F^N \circ f$. Say that $M \leq N$ if there is an embedding $f: M \to N$. Also, say that $(M, \overline{a}) \leq (N, \overline{b})$ if there is an embedding $f: M \leq N$ with $f(\overline{a}) = \overline{b}$. Finally, say that $M \sim N$ if $M \leq N \leq M$ and say that $(M, \overline{a}) \sim (N, \overline{b})$ if $(M, \overline{a}) \leq (N, \overline{b}) \leq (M, \overline{a})$.
\end{definition}
In the context of groups, we will only want to consider injective embeddings; formally then, we add a unary predicate for $\{(a, b): a \not= b\}$.

\begin{definition}
Suppose $\Phi$ is a sentence of $\mathcal{L}_{\omega_1 \omega}$. Then say that $\Phi$ has the Schr\"{o}der-Bernstein property if whenever $M, N$ are countable models of $\Phi$, if $M \sim N$ then $M \cong N$.
\end{definition}

This fails for $\mbox{TFAG}$, as first proved by Goodrick \cite{Goodrick2} and in a strong form by Calderoni and Thomas \cite{Thomas2}. Nonetheless, the statement of Theorem~\ref{ZTreesLemma} suggests a weaker property: is a group $G \models \mbox{TFAG}$ is determined by $\{(G, a)/\sim: a\in G\}$? We will call this the $1$-ary Schr\"{o}der Bernstein property. Generalizing further:

\begin{definition}
Suppose $M, N$ are $\mathcal{L}$-structures, and $\overline{a} \in M$, $\overline{b} \in M$ are tuples of the same length. By induction on the ordinals we define what it means for $(M, \overline{a}) \sim_\alpha (N, \overline{b})$.

\begin{itemize}
\item $(M, \overline{a}) \sim_0 (N, \overline{b})$ if and only if $(M, \overline{a}) \sim (N, \overline{b})$.
\item For $\delta$ limit, $(M, \overline{a}) \sim_{\delta} (N, \overline{b})$ if and only if $(M, \overline{a}) \sim_\alpha (N, \overline{b})$ for all $\alpha < \delta$.
\item $(M, \overline{a}) \sim_{\alpha+1} (N, \overline{b})$ if and only if for all $a \in M$ there is $b \in M$ with $(M, \overline{a} a) \sim_\alpha (N, \overline{b} b)$, and conversely.
\end{itemize}

Say that $M \sim_\alpha N$ if $(M, \emptyset) \sim_\alpha (N, \emptyset)$.
\end{definition}

Note the similarity between these clauses and the clauses for defining $\equiv_{\alpha \omega}$; the only change is to the base 
case.

\begin{definition}
Suppose $\alpha < \omega_1$. Then say that $\Phi$ has the $\alpha$-ary Schr\"{o}der-Bernstein property if for all countable models $M, N \models \Phi$, if $M \sim_\alpha N$ then $M \cong N$.
\end{definition}

The notion of $\alpha$-ary Schr\"{o}der-Bernstein property can be extended to $\alpha \geq \omega_1$, with some care:

\begin{definition}
Suppose $\Phi$ is a sentence of $\mathcal{L}_{\omega_1 \omega}$. A pinned name for a model of $\Phi$ is a pair $(P, \dot{M})$, where $P$ is a forcing notion, $P \Vdash \dot{M} \in \mbox{Mod}(\check{\Phi})$, and $P \times P \Vdash \dot{M}_0 \cong \dot{M}_1$, where $\dot{M}_0$ is the copy of $\dot{M}$ in the first factor of $P \times P$, and $\dot{M}_1$ is the copy of $\dot{M}$ in the second factor of $P \times P$.

Suppose $(P, \dot{M})$ and $(Q, \dot{N})$ are pinned names for models $\Phi$, and $\alpha$ is an ordinal. Then say that $(P, \dot{M}) \sim_\alpha (Q, \dot{N})$ if $P \times Q \times R \Vdash \dot{M} \sim_\alpha \dot{N}$, where $R$ is some or any forcing notion which makes $\alpha, P, Q, \dot{M}, \dot{N}$ all countable. Say that $(P, \dot{M}) \cong (Q, \dot{N})$ if $P \times Q \Vdash \dot{M} \cong \dot{N}$.

Say that $\Phi$ has the $\alpha$-ary Schr\"{o}der-Bernstein property if for all pinned names $(P, \dot{M})$, $(Q, \dot{N})$ for models of $\Phi$, if $(P, \dot{M}) \sim_\alpha (Q, \dot{N})$ then $(P, \dot{M}) \cong (Q, \dot{N})$.
\end{definition}

This does not conflict with the previous definition, by a downward Lowenheim-Skolem argument; see \cite{BorelCompAndSB}. (In \cite{BorelCompAndSB}, canonical Scott sentences are used in place of pins, but this is equivalent.)

The following will serve as the only interface we need with the machinery of pins:

\begin{lemma}\label{SuffCondForFailSB}
Suppose $\Phi$ is a sentence of $\mathcal{L}_{\omega_1 \omega}$, and $\alpha$ is an ordinal. Suppose there are $M, N \models \Phi$ such that $M \sim_\alpha N$ but $M \not \equiv_{\infty \omega} N$. Then $\Phi$ fails the $\alpha$-ary Schr\"{o}der-Bernstein property.
\end{lemma}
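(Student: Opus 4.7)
The plan is to manufacture the required pinned names directly from $M$ and $N$ via Levy collapses. Let $P$ be a forcing making $|M|$ countable (e.g.\ $P = \mbox{Coll}(\omega, |M|)$) and similarly let $Q$ make $|N|$ countable; let $\dot{M}$ be a $P$-name for the copy of $M$ on $\omega$ obtained by transporting the structure along a generic bijection $\omega \to M$, and similarly let $\dot{N}$ be the corresponding $Q$-name. The proof then reduces to three checks.

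First, I would verify that $(P, \dot{M})$ and $(Q, \dot{N})$ are pins. If $e_0, e_1 \colon \omega \to M$ are mutually generic bijections arising from $P \times P$, then both $\dot{M}_0$ and $\dot{M}_1$ are faithful copies of $M$ on $\omega$, and the bijection $e_1 \circ e_0^{-1}$ is an isomorphism between them; this is automatic from the construction.

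Second, I would show $(P, \dot{M}) \sim_{\alpha} (Q, \dot{N})$. Unwinding the pin definition, this requires $P \times Q \times R \Vdash \dot{M} \sim_{\alpha} \dot{N}$ for some (or any) $R$ collapsing $\alpha, P, Q$ to countable. Since $\dot{M} \cong M$ and $\dot{N} \cong N$ in the extension, and $\sim_\alpha$ is isomorphism-invariant, it suffices to prove that $M \sim_\alpha N$ is preserved into every generic extension. This is a routine transfinite induction on $\alpha$: the base case is upward absoluteness of biembeddability (embeddings from $V$ persist in $V[G]$); limit stages are immediate since ordinals below $\alpha$ are absolute; and for the successor step the underlying sets of $M$ and $N$ do not change, so any $a \in M$ in $V[G]$ lies in $V$ and witnesses from $V$ continue to serve by the inductive hypothesis.

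Third, I would show $(P, \dot{M}) \not\cong (Q, \dot{N})$. If instead $P \times Q \Vdash \dot{M} \cong \dot{N}$, then in $V^{P \times Q}$ we would have $M \cong N$ and hence $M \equiv_{\infty \omega} N$. The main technical point, and the only step that is not purely formal, is absoluteness of $\equiv_{\infty \omega}$ for structures in $V$: one writes $M \equiv_{\infty \omega} N$ as $M \equiv^\beta N$ for every ordinal $\beta$, where $\equiv^\beta$ is the rank-$\beta$ finitary back-and-forth equivalence on finite tuples. Each $\equiv^\beta$ is defined by transfinite recursion with absolute clauses, so $\equiv^\beta$-equivalence of two $V$-structures is absolute at each stage; thus $M \equiv_{\infty \omega} N$ already holds in $V$, contradicting the hypothesis. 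I expect this absoluteness step to be the main (and essentially only) obstacle, but it is standard; the rest of the argument is bookkeeping with the definitions of pins and $\sim_\alpha$.
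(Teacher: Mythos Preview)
Your proposal is correct and follows essentially the same approach as the paper: construct pinned names by collapsing $M$ and $N$ to countable and transporting along the generic surjection, then verify the three required properties. The paper uses the forcing of finite partial functions $\omega \to M$ directly (which is equivalent to your $\mathrm{Coll}(\omega,|M|)$ composed with a fixed bijection), and it compresses your three checks into a single ``it is easy to check''; your write-up supplies the absoluteness arguments the paper leaves implicit.
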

\begin{proof}
Let $P_M$ be the set of all finite partial functions from $\omega$ to $M$, and let $\dot{f}_M$ be the $P_M$-name for the generic surjection from $\omega$ onto $\check{M}$ added by $P_M$. Let $P_N, \dot{f}_N$ be defined similarly. Then $(P_M, \dot{f}_M^{-1}(\check{M}))$ and $(P_N, \dot{f}_N^{-1}(\check{N}))$ are pinned names for models of $\Phi$, and it is easy to check that $(P_M, \dot{f}_M^{-1}(\check{M})) \sim_\alpha (P_N, \dot{f}_N^{-1}(\check{N}))$ but $(P_M, \dot{f}_M^{-1}(\check{M})) \not \cong (P_N, \dot{f}_N^{-1}(\check{N}))$.
\end{proof}

Looking at the statement of Theorem~\ref{ZTreesLemma}, it is reasonable to ask if $\mbox{TFAG}$ has the $1$-ary Schr\"{o}der-Bernstein property. This would have consequences for the complexity of $\mbox{TFAG}$, as the following theorem of the second author \cite{BorelCompAndSB} shows:

\begin{theorem}\label{SBFailureTheorem}
Suppose $\kappa(\omega)$ exists, and suppose $\alpha$ is an ordinal. If $\Phi$ is a sentence of $\mathcal{L}_{\omega_1 \omega}$ with the $\alpha$-ary Schr\"{o}der-Bernstein property, then $\Phi$ is not $a\Delta^1_2$-complete (and hence not Borel complete).
\end{theorem}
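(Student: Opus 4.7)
The plan is to argue the contrapositive: assuming $\kappa(\omega)$ exists and that $\Phi$ is $a\Delta^1_2$-complete, I produce for a given ordinal $\alpha$ a pair of pinned names for models of $\Phi$ that are $\sim_\alpha$-equivalent but not $\cong$-equivalent, thereby refuting the $\alpha$-ary Schr\"{o}der-Bernstein property for $\Phi$. The workhorse will be the defining partition relation $\kappa(\omega) \rightarrow (\omega)^{<\omega}_\mu$ for $\mu < \kappa(\omega)$, applied to a coloring that captures $\sim_\alpha$-behavior.

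First I would fix an $a\Delta^1_2$-reduction $f : \mbox{Graphs} \to \Phi$, obtained from the assumed $a\Delta^1_2$-completeness. Because $f$ is defined by an absolute $\Delta^1_2$ formula, it extends coherently to every transitive model of $ZFC^-$ containing the relevant parameters, and in particular it induces a well-defined operation on pinned names. I would then build a family $(P_\xi, \dot{G}_\xi)_{\xi < \kappa(\omega)}$ of pinned names for pairwise non-isomorphic graphs, for example by coding distinct ordinals $\xi < \kappa(\omega)$ as rigid graphs; applying $f$ gives pinned names $(P_\xi, f(\dot{G}_\xi))$ for pairwise non-$\cong$ models of $\Phi$.

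Next, I would define a coloring of strictly increasing finite tuples $\xi_0 < \cdots < \xi_n$ from $\kappa(\omega)$ by an invariant of the $\sim_\alpha$-type of the image tuple $(f(\dot{G}_{\xi_0}), \ldots, f(\dot{G}_{\xi_n}))$, computed inside a common forcing extension making all the data countable. If the palette of colors has size strictly below $\kappa(\omega)$, the partition property yields an infinite indiscernible subsequence $\xi_0 < \xi_1 < \cdots$, and indiscernibility then forces the pinned names $(P_{\xi_i}, f(\dot{G}_{\xi_i}))$ to be pairwise $\sim_\alpha$-equivalent. Since $f$ is a reduction and the $\dot{G}_{\xi_i}$ are pairwise non-isomorphic, these pinned names are pairwise non-$\cong$, yielding the sought failure of $\alpha$-ary SB.

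The main obstacle will be the complexity estimate on the coloring: I need the number of possible $\sim_\alpha$-types of finite tuples of pinned $\Phi$-model names to be strictly below $\kappa(\omega)$. For $\alpha < \kappa(\omega)$ this should come out by transfinite induction on $\alpha$, using that $\kappa(\omega)$ is inaccessible in $\mathbb{L}$ (part (B) of the set-theoretic lemma) and that each step $\sim_\beta \mapsto \sim_{\beta+1}$ adds only a single quantifier layer over a $\leq \aleph_0$ sort. For $\alpha \geq \kappa(\omega)$ I would reduce to the previous case by noting that the $\alpha$-ary SB property is monotone in $\alpha$, so a failure at some $\beta < \kappa(\omega)$ already defeats $\alpha$-ary SB. A secondary subtlety is ensuring that indiscernibility genuinely produces $\sim_\alpha$ in the $P \times Q \times R$ product sense demanded by pins; I expect to handle this by choosing the witnessing forcing $R$ uniformly throughout the Ramsey argument and invoking absoluteness of $a\Delta^1_2$-reductions under the extension.
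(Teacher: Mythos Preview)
This theorem is not proved in the present paper. It is stated as a result of the second author and cited from \cite{BorelCompAndSB} (``Borel Complexity and the Schr\"{o}der-Bernstein Property''); the paper only invokes it as motivation for the subsequent construction. So there is no proof here to compare your proposal against.

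That said, your outline has the right architecture for a Ramsey-style argument exploiting $\kappa(\omega)\to(\omega)^{<\omega}_\mu$. The point you correctly flag as the ``main obstacle'' is indeed the entire content: you need that the number of $\sim_\alpha$-types of (tuples of) pinned $\Phi$-models is strictly less than $\kappa(\omega)$. Your sketch of how to get this---an induction on $\alpha$ using inaccessibility of $\kappa(\omega)$ in $\mathbb{L}$---is not yet an argument: the base case $\sim_0$ is biembeddability, which on countable structures can have continuum-many classes, and it is not obvious that passing to pinned names and working in $\mathbb{L}$ collapses this below $\kappa(\omega)$. You would need a genuine counting lemma here, and that is where the substance of the cited paper presumably lies. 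Your reduction of the case $\alpha\geq\kappa(\omega)$ to $\alpha<\kappa(\omega)$ via monotonicity is also backwards: the $\alpha$-ary SB property gets \emph{weaker} as $\alpha$ grows (since $\sim_\alpha$ refines $\sim_\beta$ for $\beta<\alpha$), so failure at small $\beta$ does not imply failure at larger $\alpha$. You would need a separate argument for large $\alpha$, or to show that $\sim_\alpha$ stabilizes below $\kappa(\omega)$.
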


In this section, we prove Theorem~\ref{SBFailure}, namely: for every $\alpha < \kappa(\omega)$, $\mbox{TFAG}$ fails the $\alpha$-ary Schr\"{o}der-Bernstein property. The construction breaks down at $\kappa(\omega)$, so the following remains open:

\vspace{2 mm}

\noindent \textbf{Question.} Does $\mbox{TFAG}$ have the $\kappa(\omega)$-ary Schr\"{o}der-Bernstein property?

\vspace{2 mm}

In the remainder of this section, we prove the following:

\begin{theorem}\label{SBFailureTheorem}
Suppose $\kappa(\omega)$ does not exist. Then for every ordinal $\alpha$, $\mbox{TFAG}$ fails the $\alpha$-ary Schr\"{o}der-Bernstein property.
\end{theorem}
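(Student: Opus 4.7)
By Lemma~\ref{SuffCondForFailSB}, it suffices to produce, for each ordinal $\alpha$, countable $G_0, G_1 \models \mbox{TFAG}$ with $G_0 \sim_\alpha G_1$ but $G_0 \not\equiv_{\infty \omega} G_1$. The plan is to construct the analogous example first in the category of $\omega$-colored trees and then transfer to groups via the functor from Theorem~\ref{ZTreesLemma}.

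The hypothesis that $\kappa(\omega)$ does not exist gives, by part (E) of the preceding lemma, an antichain $(\mathcal{S}_\gamma : \gamma < \kappa)$ of $\omega$-colored trees of any prescribed length $\kappa$; I would fix such an antichain with $\kappa > |\alpha|$. Using the $\mathcal{S}_\gamma$'s as distinguishing tags grafted near the leaves, I would build, by transfinite recursion on $\alpha$, a pair of countable colored trees $\mathcal{T}_\alpha, \mathcal{T}'_\alpha$ with the following two features. On the one hand, the height-stratified multisets of biembeddability classes $\{\mathcal{T}_{\geq t}/{\sim} : t \in T_\alpha,\ \mbox{ht}(t) = n\}$ and the analogous one for $\mathcal{T}'_\alpha$ differ for some $n$, forcing $\mathcal{T}_\alpha \not\equiv_{\infty\omega} \mathcal{T}'_\alpha$. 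On the other hand, a depth-$\alpha$ back-and-forth with embeddings at the base admits a winning Duplicator strategy: the antichain decoration creates many pairwise incomparable ``slots'' so that at each successor level Duplicator can respond to a fresh element Spoiler chooses with a partner carrying the appropriate biembeddability data at one less level of depth, and at limits the prior choices amalgamate coherently.

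Once $\mathcal{T}_\alpha, \mathcal{T}'_\alpha$ are in hand, I would apply the operation $\mathcal{T} \mapsto \mathcal{T} \otimes \mathbb{Z}$ from the proof of Theorem~\ref{ZTreesLemma}, then chain through the Borel reductions of Theorem~\ref{Bireducibilities} to obtain countable $G_0, G_1 \models \mbox{TFAG}$. The conclusion $G_0 \not\equiv_{\infty \omega} G_1$ follows from the strengthened conclusion of Theorem~\ref{ZTreesLemma}, which says precisely that the height-stratified biembeddability classes of subtrees are recovered by the functor and hence must already differ for the groups. The conclusion $G_0 \sim_\alpha G_1$ is the heart of the matter: given any $g \in G_0$, the representation-and-goodness analysis in Claims 1 and 2 of Theorem~\ref{ZTreesLemma} canonically associates to $g$ a subtree $\mathcal{T}^*_g$ biembeddable with $\mathcal{T}_{\geq t}$ for some $t \in \mbox{supp}(g)$; Duplicator invokes the tree-level strategy to select a matching $t' \in T'_\alpha$ and then uses the same integer coefficients and $p$-powers over the matched support to produce $g' \in G_1$, thereby extending the partial embeddings on both sides simultaneously.

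The main obstacle is carrying out the recursive tree construction so that Duplicator's strategy extends at successors and amalgamates at limits while simultaneously enforcing the global mismatch of biembeddability-class multisets; a secondary obstacle is verifying that the coefficient-and-$p$-power bookkeeping in the transfer step really lifts a tree-level move to a group-level move preserving pointed biembeddability. Both obstacles consume a fresh incomparable tag-tree at every step of the recursion, which is exactly why the construction halts at $\kappa(\omega)$: the antichain supply vanishes there, and Duplicator can no longer find a genuinely new partner, leaving the $\kappa(\omega)$-ary case open as stated.
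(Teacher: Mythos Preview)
Your route differs substantially from the paper's, and the central step does not go through as written. The paper does \emph{not} build the counterexamples at the level of colored trees and then push through $\mathcal{T}\mapsto\mathcal{T}\otimes\mathbb{Z}$; it works directly inside $\Omega_{\I,\{0\}}^p$. The distinguishing invariant is not ``height-stratified biembeddability multisets'' but the well-foundedness of an intrinsically defined partial order $(X^{\overline G},\leq^{\overline G})$, where $X^{\overline G}=G\setminus\bigcup_i G_i$ and $a\leq^{\overline G}b$ iff some iterate of $\phi$ sends $a$ to $b$ through $X^{\overline G}$. One builds $\overline{G}^0_*,\overline{G}^1_*$ as unions of chains of finite-stage approximations equipped with ordinal-valued rank bounds $\rho$ on $X^{\overline G}$, arranging that $(X^{\overline G^0_*},\leq)$ is ill-founded while $(X^{\overline G^1_*},\leq)$ is well-founded; the back-and-forth is organized via families $\mathcal{F}^{\ell,n}$ of partial ``$\alpha$-embeddings'' (embeddings that preserve $\rho$ below level $\omega\cdot\alpha$), and two extension lemmas produce the successor step. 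Only afterwards does an antichain of colored trees enter, in Lemma~\ref{CodeSeq}, to compress the possibly uncountable index set down to a countable one, and Lemma~\ref{RedToFrames} transfers $\sim_\alpha$ (with a controlled loss) from $\Omega^p$ to $\mbox{TFAG}$.

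The gap in your plan is the lifting of $\sim_\alpha$ along $\mathcal{T}\mapsto\mathcal{T}\otimes\mathbb{Z}$. A colored-tree embedding need not be injective, so it does not induce a group embedding of $\oplus_T\mathbb{Z}$ at all; and in the group game Spoiler plays an arbitrary $g\in\oplus_T\mathbb{Z}$, whose support is a finite \emph{set} of nodes, not a single one. Producing a partner $g'$ requires Duplicator to match the entire support coherently and then to exhibit pointed embeddings $(G_0,g)\leq(G_1,g')$ and back, which forces injective tree maps carrying the whole support set correctly---your ``match one $t$, reuse the coefficients'' recipe does not yield that. The paper confronts exactly this issue for the later reduction $\Omega^p\to\mbox{TFAG}$ by proving (Lemma~\ref{RedToFrames}) that an $\Omega$-embedding lifts to a group embedding and that $\sim_{2\cdot(\omega\cdot\alpha)}$ on the source implies $\sim_\alpha$ on the target; you would need an analogous lemma for the tree-to-$\Omega$ step, and none is available. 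A minor further point: for $\alpha\geq\omega_1$ you cannot restrict to countable $G_0,G_1$; Lemma~\ref{SuffCondForFailSB} allows (and the paper uses) uncountable models.
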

Note that Theorem~\ref{SBFailureThm} follows: for every $\alpha < \kappa(\omega)$, $\mbox{TFAG}$ fails the $\alpha$-ary Schr\"{o}der-Bernstein property. This is because we can always apply Theorem~\ref{SBFailureTheorem} in $\mathbb{V}_{\kappa(\omega)} = H(\kappa(\omega))$.

So, in the remainder of this section, suppose $\kappa(\omega)$ does not exist; equivalently, for every cardinal $\lambda$, there is an antichain of colored trees of length $\alpha$. 

First of all, we note the following lemma:

\begin{lemma}\label{RedToFrames}
Suppose $\I, \J$ are countable index sets, not both empty; let $F: \Omega_{\I, \J}^p \leq_B \mbox{TFAG}$ be the Borel reduction from the proof of Theorem~\ref{Bireducibilities} (that is, the composition of the reductions from Lemma~\ref{BiredLemma3} and Lemma~\ref{HjorthThm}). Suppose $\overline{G}^0, \overline{G}^1 \in \mbox{Mod}(\Omega_{\I, \J})$ and $\alpha < \omega_1$. If $\overline{G}^0 \sim_{2 \cdot (\omega \cdot \alpha)} \overline{G}^1$, then $F(\overline{G}^0) \sim_{\alpha} F(\overline{G}^1)$. 

Hence, if $\Omega^p_{\I, \J}$ fails the $\alpha$-ary Schr\"{o}der-Bernstein property for every ordinal $\alpha < \kappa(\omega)$, then so does $\mbox{TFAG}$.
\end{lemma}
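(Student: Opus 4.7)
Write $F = F_2 \circ F_1$, where $F_1 : \Omega^p_{\I, \J} \to \Omega^p_{\omega, 0}$ is the reduction from Lemma~\ref{BiredLemma3} and $F_2 : \Omega^p_{\omega, 0} \to \mbox{TFAG}$ is the $p$-adic construction from Lemma~\ref{HjorthThm}. My plan is to check that $F_1$ preserves $\sim_\beta$ exactly---so that all of the ordinal inflation must come from $F_2$---and then show that $\overline{G}^0 \sim_{2\omega\beta} \overline{G}^1$ implies $F_2(\overline{G}^0) \sim_\beta F_2(\overline{G}^1)$. The claim about $F_1$ is purely structural: by inspection of Lemma~\ref{BiredLemma3}, $F_1(G, G_i, \phi_j)$ sits on $G \times G$ with named subgroups including $G \times 0$ and the diagonal, and these constraints force any frame embedding $F_1(\overline{G}^0) \leq F_1(\overline{G}^1)$ to have the form $(x, y) \mapsto (\psi(x), \psi(y))$ for a single frame embedding $\psi : \overline{G}^0 \leq \overline{G}^1$; the remaining named data then forces $\psi$ to preserve all of the $G_i$ and $\phi_j$ structure. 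Thus the embedding relations on the two sides are canonically identified, and the preservation of $\sim_\beta$ follows by induction on $\beta$.

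For the $F_2$ half, recall that every $a \in F_2(\overline{G})$ has a unique representation $a = \sum_{n < \omega} \gamma_n p^{k(n)} b_n$ with finitely many nonzero $b_n \in G_n$ and $p \nmid b_n$; call the list of these nonzero $b_n$ together with their indices and exponents the \emph{profile} of $a$. I will induct on $\beta$ on the strengthened statement: if $\vec{c}^j$ lists the profile elements of tuples $\vec{a}^j \in F_2(\overline{G}^j)$ in a coherent way (corresponding entries sit in the same $G_n$ with the same exponent $k(n)$), and $(\overline{G}^0, \vec{c}^0) \sim_{2\omega\beta} (\overline{G}^1, \vec{c}^1)$, then $(F_2(\overline{G}^0), \vec{a}^0) \sim_\beta (F_2(\overline{G}^1), \vec{a}^1)$. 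For $\beta = 0$: any frame embedding $\psi$ witnessing $(\overline{G}^0, \vec{c}^0) \leq (\overline{G}^1, \vec{c}^1)$ lifts to the injective group map $1 \otimes \psi : F_2(\overline{G}^0) \to F_2(\overline{G}^1)$ by flatness of $\mathbb{Z}_p$ over $\mathbb{Z}$, and the representation formula forces $1 \otimes \psi$ to send $\vec{a}^0$ to $\vec{a}^1$; symmetrically in the other direction. For the successor step, given a new $a \in F_2(\overline{G}^0)$ with $k$ profile entries, use that $\sim_{2\omega(\beta + 1)}$ implies $\sim_{2\omega\beta + 2k}$ to extend $\vec{c}^0$ by the profile of $a$ one entry at a time, obtaining matched $d^*_1, \ldots, d^*_k \in \overline{G}^1$ in the appropriate $G^1_{n(\ell)}$ with $\sim_{2\omega\beta}$ still intact; then $b := \sum \gamma_n p^{k(n)} d^*_\ell$ lies in $F_2(\overline{G}^1)$ and plays the role of $a$. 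The ``for every $b$ pick $a$'' direction and the limit stage are symmetric and routine.

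The main obstacle is the successor step's verification that the coherence invariant survives after reassembling $b$: a general frame embedding need not be pure, so some $d^*_\ell$ may acquire divisibility by $p$ in $G^1$, and the canonical profile of $b$ can involve shifted exponents $k(n) + j_\ell$ and renormalized ingredients $b^*_\ell$ with $d^*_\ell = p^{j_\ell} b^*_\ell$. One must extract these $p$-factors carefully---using purity of $G_n^1$ inside $G^1$---and check that the renormalized profile data is exactly what the inductive hypothesis at the next stage consumes. Once this bookkeeping is in hand, the rest of the argument is formal tensor-flatness and ordinal arithmetic. The second assertion of the lemma follows by contraposition: if $\Omega^p_{\I, \J}$ fails the $\alpha'$-ary Schr\"{o}der-Bernstein property for every $\alpha' < \kappa(\omega)$, apply this for $\alpha' = 2\omega\alpha$ (which satisfies $\alpha' < \kappa(\omega)$ by cardinality, since $\kappa(\omega)$ is a cardinal greater than $\omega$); take witnesses $\overline{G}^0 \sim_{2\omega\alpha} \overline{G}^1$ with $\overline{G}^0 \not\cong \overline{G}^1$, push through $F$ to get $F(\overline{G}^0) \sim_\alpha F(\overline{G}^1)$ with $F(\overline{G}^0) \not\cong F(\overline{G}^1)$ (as $F$ is a Borel reduction), and invoke Lemma~\ref{SuffCondForFailSB} to conclude the corresponding failure for $\mbox{TFAG}$.
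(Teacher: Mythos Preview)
Your treatment of $F_2$ is close to the paper's, but the claim that $F_1$ preserves $\sim_\beta$ with no ordinal inflation has a genuine gap. The assertion that every frame embedding $F_1(\overline{G}^0) \leq F_1(\overline{G}^1)$ must have the form $(x,y) \mapsto (\psi(x),\psi(y))$ is simply false: any map $(x,y) \mapsto (Ax + By,\,(A+B)y)$ preserves both $G \times 0$ and the diagonal, for arbitrary homomorphisms $A,B$ with $A$ respecting the $G_i$'s. More to the point, even if embeddings were canonically identified, the back-and-forth defining $\sim_\beta$ is played on \emph{elements}, and a single element $(a^0,a^1) \in G \times G$ corresponds to two elements of $G$. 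Matching one move in the $\overline{G}'$-game therefore consumes two moves in the $\overline{G}$-game, which is exactly the factor-of-$2$ inflation the paper records for $F_0$: from $(\overline{G}_0,(a_i^j)) \sim_{2\beta} (\overline{G}_1,(b_i^j))$ one gets $(\overline{G}'_0,((a_i^0,a_i^1))) \sim_\beta (\overline{G}'_1,((b_i^0,b_i^1)))$. This does not damage the final bound, since $2\cdot(\omega\cdot\alpha) = \omega\cdot\alpha$, but your argument as written does not establish what it claims.

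For $F_2$ the paper takes a slightly cleaner route: it uses \emph{weak} representations $a = \sum_n \gamma_n p^{k(n)} b_n$ (dropping the condition $p \nmid b_n$), so that the matched $d^*_\ell$'s themselves serve as a representation of $b$ and no renormalization is ever needed. In fact your ``main obstacle'' is illusory even with canonical profiles: since $\sim_0$ is bi-embeddability, there is a back-embedding $g$ with $g(d^*_\ell) = b_\ell$, and if $d^*_\ell = p^j b^*_\ell$ then $b_\ell = p^j g(b^*_\ell)$, forcing $j=0$ because $p \nmid b_\ell$. So the exponents cannot shift, and the bookkeeping you anticipate never arises.
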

\begin{proof}
	The final claim follows, since the first part continues to hold in forcing extensions.

Write $\I' = \I \cup \J \cup \{*_0, *_1\}$ (we suppose this is a disjoint union).

Let $F_0: \Omega_{\I, \J}^p \leq_B \Omega_{\I', 0}^p$ be as in Lemma~\ref{BiredLemma3} and let $F_1: \Omega_{\omega, 0}^p \leq_B \mbox{TFAG}$ be as in Lemma~\ref{HjorthThm}. 

First we look at $F_0$. We recap the definition of $F_0$, for the reader's convenience. 
Suppose $\overline{G} = (G, G_i: i \in \I, \phi_j: j \in \J) \models \Omega_{\I, \J}^p$ is countable. Define $G' = G \times G$; for each $i \in \I$, define $G'_{i}$ to be the copy of $G_i$ in the first factor of $G'$; for each $j \in \J$, define $G'_j$ to be the graph of $\phi_j$; define $G'_{*_0} = G \times 0$;  and finally let $G'_{*_1}$ be the graph of the identify function $id_G: G \to G$. Then $F(G, G_i: i \in I, \phi_j: j \in J)$ is $\overline{G}' = (G', G'_{i'}: i' \in \I')$ (suppressing the coding that arranges everything to have universe $\omega$).

Suppose $\overline{G}_0, \overline{G}_1 \models \Omega_{\I, \J}^p$ are countable, and define $\overline{G}'_0, \overline{G}'_1$ as above. Then it is easy to check that for all $((a_i^{0}, a_i^1): i < i_*)$ from $\overline{G}'_0$ and all $(b_i^0, b_i^1): i < i_*)$ from $\overline{G}'_1$, if $f: (\overline{G}_0, (a_i^j: i < i_*, j < 2)) \leq (\overline{G}_1, (b_i^j: i< i_*, j < 2)$, then $f \times f: (\overline{G}'_0, ((a_i^0, a_i^1): i < i_*)) \leq (\overline{G}'_1, ((b_i^0, b_i^1): i < i_*))$. From this it follows by an easy inductive argument that for all $\beta < \omega_1$, if $(\overline{G}_0, (a_i^j: i < i_*, j < 2)) \sim_{2 \cdot \beta} (\overline{G}_1, (b_i^j: i< i_*, j < 2)$, then $(\overline{G}'_0, ((a_i^0, a_i^1): i < i_*)) \sim_\beta (\overline{G}'_1, ((b_i^0, b_i^1): i < i_*))$. 

%

Next we look for $F_1$. Let $(\gamma_n: 1 \leq n < \omega)$ be as in Lemma~\ref{HjorthThm}, i.e. a sequence of algebraically independent units of $\mathbb{Q}_p$; and let $\gamma_0 =1$. Let $\overline{G} = (\oplus_\omega \mathbb{Z}, G_n: n < \omega)$ be a countable model of $\Omega_{\omega, 0}^p$; we only consider the case where $G_0 = G_1 = \oplus_\omega \mathbb{Z}$, without loss of generality. Then recall $F_1(\overline{G})$ is (isomorphic to) $G$, where $G$ is the $p$-pure subgroup of $\oplus_\omega \mathbb{Z}_p$ generated by $\bigcup_n \gamma_n G_n$. Recall that every $a \in G$ can be written as a sum $a = \sum_{n < \omega} \gamma_n p^{k(n)} b_n$, where each $k(n) \in \mathbb{Z}$, $b_n \in G_n$ and all but finitely many $k(n), b_n$ are $0$. Say that this is a weak representation of $a$ (it may not be a full representation; we don't require that $p \not | b_n$ in $G_n$.)

Suppose $\overline{G}^j = (\oplus_\omega \mathbb{Z}, G^j_n: n < \omega)$ are countable models of $\Omega_{\omega, 0}^p$ for $j < 2$; let $G^0, G^1$ be defined from $\overline{G}^0, \overline{G}^1$ as above. Suppose $f: \overline{G}^0 \leq \overline{G}^1$. Define $f_*: \oplus_\omega \mathbb{Z}_p \to \oplus_\omega \mathbb{Z}_p$ via $f_*(\sum_n \gamma_n e_n) = \sum_n \gamma_n f(e_n)$, where $(e_n: n < \omega)$ is the standard basis. Moreover, $f_* \restriction_{G^0}: G^0 \leq G^1$, since $f_*$ preserves the action of $\mathbb{Z}_p$. 

Suppose $(a_i: i < i_*)$ is a sequence from $\oplus_{\omega} \mathbb{Z}$, and suppose $(a'_i: i < i_*)$ is a sequence from $\oplus_\omega \mathbb{Z}$. Suppose for each $i < i_*$, $a_i = \sum_{n \in \Gamma_i} \gamma_n p^{k_i(n)} b_{i,n}$ is a weak representation with respect to $\overline{G}^0$, and $a'_i = \sum_{n \in \Gamma_i} \gamma_n p^{k_i(n)} b'_{i,n}$ is a weak representation with respect to $\overline{G}^1$, for finite sets $\Gamma_i\subset \omega$. Suppose finally that $f: (\overline{G}^0, (b_{i, n}: n \in \Gamma_i, i < i_*)) \leq (\overline{G}^1, (b'_{i, n}: n \in \Gamma_i, i < i_*))$. Then note that each $f_*(p^{k_i(n)} b_{i, n}) =p^{k_i(n)} b'_{i, n}$, hence each $f_*(a_i) =a'_i$, hence $f_*: (G^0, (a_i: i < i_*)) \leq (G^1, (a'_i: i < i_*))$.

From this, an easy inductive argument shows that if $(\overline{G}^0, (b_{i, n}: n \in \Gamma_i, i < i_*)) \sim_{\omega \cdot \alpha} (\overline{G}^1, (b'_{i, n}: n \in \Gamma_i, i < i_*))$, then $(G^0, (a_i: i < i_*)) \sim_\alpha (G^1, (a'_i: i < i_*))$.

\end{proof}
Thus it suffices to show that some $\Omega_{\I, \J}^p$ fails the $\alpha$-ary Schr\"{o}der-Bernstein property for all $\alpha$.

For the next lemma, we make the obvious definitions for $\Omega_{\I, \J}^p$ in the case where the index sets are possibly uncountable. 

\begin{lemma}\label{CodeSeq}
Suppose $\kappa(\omega)$ does not exist. Suppose $\I, \J$ are index sets, and suppose $\overline{G}^0, \overline{G}^1 \models \Omega_{\I, \J}$. Then we can find $\mathbf{F}(\overline{G}^0), \mathbf{F}(\overline{G}^1) \models \Omega_{\omega \times \omega \cup \{0, 1\}, \{0, 1\}}^p$, such that $\overline{G}^0 \equiv_{\infty \omega} \overline{G}^1$ if and only if $\mathbf{F}(\overline{G}^0) \equiv_{\infty \omega} \mathbf{F}(\overline{G}^1)$, and for every ordinal $\beta$, if $\overline{G}^0 \sim_\beta \overline{G}^1$ then $\mathbf{F}(\overline{G}^0) \sim_\beta \mathbf{F}(\overline{G}^1)$.
\end{lemma}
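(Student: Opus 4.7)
The plan is to exploit the hypothesis -- that no $\kappa(\omega)$ exists -- to fix, once and for all, an antichain $(\mathcal{S}_\ell : \ell \in \I \sqcup \J \sqcup \{*\})$ of $\omega$-colored trees with $\mathcal{S}_\ell \not\leq \mathcal{S}_{\ell'}$ for $\ell \neq \ell'$. These trees will function as unforgeable ``labels'' that any embedding must respect. The construction then packages the data of $\overline{G} = (G, G_i, \phi_j : i \in \I, j \in \J)$ into a single colored tree $\mathcal{T}_{\overline{G}}$, and builds $\mathbf{F}(\overline{G})$ by combining $\oplus_G \mathbb{Z}$ -- from which $G$ is recovered as the augmentation quotient, as in Lemma~\ref{BiredLemma1} -- with the group $\mathcal{T}_{\overline{G}} \otimes \mathbb{Z}$ from the proof of Theorem~\ref{ZTreesLemma}. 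The two $\{0,1\}$-indexed subgroups mark the two summands, the two $\{0,1\}$-indexed homomorphisms are the augmentation map and the tree-parent map $\pi_{\mathcal{T}_{\overline{G}}}$, and the $\omega \times \omega$-indexed pure subgroups are the $G_{\mathcal{T}_{\overline{G}}, n, i}$'s from Theorem~\ref{ZTreesLemma} (which automatically land in $\Omega^p$).

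To build $\mathcal{T}_{\overline{G}}$: below the root, place one ``spine'' vertex $v_a$ for each $a \in G$ (all colored identically, in a color reserved for the spine). Above $v_a$, for every $i \in \I$ with $a \in G_i$ attach a fresh copy of $\mathcal{S}_i$ with colors shifted into a disjoint range; for every $j \in \J$ with $a \in \mbox{dom}(\phi_j)$, attach a fresh copy of $\mathcal{S}_j$ together with a ``pointer gadget'' -- a copy of $\mathcal{S}_*$ glued onto a pair vertex above $v_a$ that also has $v_{\phi_j(a)}$ as a second parent (encoded via a dedicated spine color so the pairing is tree-recoverable). The assignment $\overline{G} \mapsto \mathcal{T}_{\overline{G}} \mapsto \mathbf{F}(\overline{G})$ is then functorial in embeddings: any $f : \overline{G}^0 \leq \overline{G}^1$ (with or without parameters) sends $G^0_i$ into $G^1_i$ and respects the $\phi_j$, so it extends by ``identity on each attached $\mathcal{S}_\ell$'' to a tree-embedding $\mathcal{T}_{\overline{G}^0} \leq \mathcal{T}_{\overline{G}^1}$; tensoring with $\mathbb{Z}$ gives the group embedding, and a routine induction on $\beta$ (as in the second half of the proof of Lemma~\ref{RedToFrames}) then yields the preservation of $\sim_\beta$.

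The $\equiv_{\infty\omega}$ direction is the substance of the lemma. One direction is immediate from functoriality. For the other, suppose $\mathbf{F}(\overline{G}^0) \equiv_{\infty\omega} \mathbf{F}(\overline{G}^1)$; on countable structures this is isomorphism, so fix an isomorphism $\Psi$. Since $\Psi$ preserves the distinguished subgroups and homomorphisms, it restricts to an isomorphism on the $\mathcal{T}_{\overline{G}^j} \otimes \mathbb{Z}$ summands; by the analysis of good elements and of $\mathcal{T}^*_a$ from the proof of Theorem~\ref{ZTreesLemma}, this descends to an isomorphism of colored trees $\mathcal{T}_{\overline{G}^0} \cong \mathcal{T}_{\overline{G}^1}$, and the antichain property forces each copy of $\mathcal{S}_i$ on one side to match a copy of $\mathcal{S}_i$ on the other (never a copy of $\mathcal{S}_{i'}$). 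Reading this back through the augmentation quotient recovers an $\Omega_{\I,\J}$-isomorphism $\overline{G}^0 \cong \overline{G}^1$. The main obstacle I anticipate is the pointer gadget: one must arrange the $\mathcal{S}_*$ attachments so that the tree isomorphism is rigid enough to match not just the domains of the $\phi_j^0, \phi_j^1$ but their entire graphs; tuning the colors of the pairing vertex and leveraging $\mathcal{S}_*$ (disjoint from all $\mathcal{S}_i, \mathcal{S}_j$) is what makes this work.
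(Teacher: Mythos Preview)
There are two genuine gaps. First, the ``pointer gadget'' is not a tree construction: you describe a vertex above $v_a$ that ``also has $v_{\phi_j(a)}$ as a second parent,'' but trees have unique predecessors, and a color (a natural number) cannot encode an element of the possibly uncountable group $G$, so there is no way to make a node point sideways to another spine vertex. The paper sidesteps this entirely by first invoking Lemma~\ref{BiredLemma3} to reduce to $\J = \emptyset$, so no homomorphisms need be coded into a tree at all. Second, your recovery argument overstates what Theorem~\ref{ZTreesLemma} delivers: the good-element analysis there shows only that $\mathcal{T} \otimes \mathbb{Z} \cong \mathcal{T}' \otimes \mathbb{Z}$ implies $\mathcal{T} \sim \mathcal{T}'$ (biembeddability), not isomorphism, so the claim that an isomorphism of the $\mathbf{F}$'s ``descends to an isomorphism of colored trees'' is unjustified. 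From a mere tree embedding you would get a map $a \mapsto a'$ on spine vertices preserving each ``$a \in G_i$,'' but nothing forces that map to be a group homomorphism---your tree $\mathcal{T}_{\overline{G}}$ records only the type of each element, and you have not specified any link between the tree summand and $\oplus_G \mathbb{Z}$ pinning $v_a$ to the basis vector $\delta_a$. (Note also that the models here are not assumed countable, so you cannot pass from $\equiv_{\infty\omega}$ to $\cong$.)

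The paper's construction is rather different and avoids both problems. After reducing to $\J = \emptyset$ and $\I = \lambda$, it fixes a \emph{single} tree $\mathcal{T}$, the same for both $\ell$, with $\lambda$ many copies of each antichain member $\mathcal{T}_\gamma$ at level~$1$. Then $\mathbf{F}(\overline{G}^\ell) = G_\mathcal{T} \oplus G^\ell$, carrying the $\mathcal{T} \otimes \mathbb{Z}$ data on the first summand, $H^1 = G^\ell$ naming the second, and an extra homomorphism $\psi^\ell : G_\mathcal{T} \to G^\ell$ sending the level-$1$ basis elements of the $\gamma$-block $\lambda$-to-one onto $G^\ell_\gamma$. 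The antichain property makes each block $\hat G_{\mathcal{T},\gamma}$ $\mathcal{L}_{\infty\omega}$-definable inside $\mathcal{T} \otimes \mathbb{Z}$ (via the $\mathcal{T}^*_g$ analysis), hence $G^\ell_\gamma = \psi^\ell[\hat G_{\mathcal{T},\gamma}]$ is $\mathcal{L}_{\infty\omega}$-definable in $\mathbf{F}(\overline{G}^\ell)$; this gives the $\equiv_{\infty\omega}$ direction directly, without ever needing to descend to a tree isomorphism. For $\sim_\beta$, since the $G_\mathcal{T}$ part is literally identical on both sides and each $\psi^\ell \restriction_{\mathcal{E}_\gamma}$ is $\lambda$-to-one, any embedding $\overline{G}^0 \to \overline{G}^1$ (with parameters) lifts by permuting the copies of $\mathcal{T}_\gamma$, and the back-and-forth transfers with no loss in $\beta$.
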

\begin{proof}
We can suppose $\J = \emptyset$, by applying the construction from Lemma~\ref{BiredLemma3}. 

Choose $\lambda$ large enough so that $\I, \overline{G}^0, \overline{G}^1$ all are of size at most $\lambda$. We can suppose $\I = \lambda$. 

Let $(\mathcal{T}_\gamma: \gamma < \lambda)$ be a family of pairwise-non-biembeddable colored trees. Let $\mathcal{T}$ be the colored tree such that $c_\mathcal{T}(0) =0$ (say), and for each $\gamma< \lambda$, there are $\lambda$-many $t \in T$ of height $1$ such that $\mathcal{T}_{\geq t} \cong \mathcal{T}_{\gamma}$, and for each $t \in T$ of height $1$, $\mathcal{T}_{\geq t}$ is isomorphic to some such $\mathcal{T}_\gamma$.

Recall the definition of $\mathcal{T} \otimes \mathbb{Z} = (G_{\mathcal{T}}, G_{\mathcal{T}, n, i}, \pi: n, i < \omega) \models \Omega_{\omega \times \omega, \{0\}}^p$ from Theorem~\ref{ZTreesLemma}. For each $\gamma < \lambda$, let $\mathcal{E}_\gamma$ be the set of all $t \in T$ of height $1$ such that $\mathcal{T}_{\geq t} \cong \mathcal{T}_\gamma$. Let $\hat{G}_{\mathcal{T}, \gamma}$ denote the subgroup of $G_{\mathcal{T}}$ spanned by $\mathcal{E}_\gamma$. Note that each $\hat{G}_{\mathcal{T}, \gamma}$ is $\mathcal{L}_{\infty \omega}$-definable, since $(\mathcal{T}_\gamma: \gamma < \lambda)$ is an antichain, and so $g \in \hat{G}_{\mathcal{T}, \gamma}$ if and only if $g = 0$ or else $\mathcal{T}_\gamma$ embeds into $\mathcal{T}^*_g$.

Let $\mathbf{F}(\overline{G}^\ell) = (G_{\mathcal{T}} \oplus G^\ell,  G_{\mathcal{T}, n, i}, H^0, H^1, \pi, \psi^\ell: n, i < \omega) \models \Omega^p_{\omega \times \omega \cup \{0, 1\}, \{0, 1\}}$, where $H^0 = \mathcal{T} \otimes \mathbb{Z}$, $H^1 = G^\ell$, and where $\psi^\ell: G_\mathcal{T} \to G^\ell$ satisfies:

\begin{itemize}
\item $\psi^\ell(t) = 0$ for all $t \in T$ not of height $1$,
\item For every $\gamma < \lambda$, $\psi \restriction_{\mathcal{E}_{\gamma}}: \mathcal{E}_{\gamma} \to G_\gamma^\ell$ is $\lambda$-to-one.
\end{itemize}

It is easy to check that this works. 

\end{proof}

Thus, to finish it suffices to verify the following:

\begin{lemma}
Suppose $\kappa(\omega)$ does not exist. Suppose $\alpha_* < \kappa(\omega)$. Then for some index set $\I $, there are $\overline{G}^0_*, \overline{G}^1_* \models \Omega_{\I, \{0\}}^p$, with $\overline{G}^0_* \sim_{\alpha_*} \overline{G}^1_*$ yet $\overline{G}^0_* \not \equiv_{\infty \omega} \overline{G}^1_*$.
\end{lemma}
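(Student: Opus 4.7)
My plan is to reduce the problem to a combinatorial question about colored trees, and then use the non-existence of $\kappa(\omega)$ — via the Silver-type characterization (D) — to build the desired trees.

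\textbf{Step 1 (transport to the tree category).} I would first show that the functor $\mathcal{T}\mapsto \mathcal{T}\otimes\mathbb{Z}$ from the proof of Theorem~\ref{ZTreesLemma} preserves both of our invariants: if $\mathcal{T}^0 \sim_{\alpha_*} \mathcal{T}^1$ as $\omega$-colored trees (in the sense inherited from the embedding-based relation $\sim$) then $\mathcal{T}^0\otimes\mathbb{Z}\sim_{\alpha_*}\mathcal{T}^1\otimes\mathbb{Z}$, and if $\mathcal{T}^0\not\equiv_{\infty\omega}\mathcal{T}^1$ then the corresponding groups are not $\equiv_{\infty\omega}$ either. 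The second claim is essentially free from Theorem~\ref{ZTreesLemma}, since that theorem produces a canonical $\mathcal{L}_{\infty\omega}$-definable family $\{\mathcal{T}^*_a/{\sim} : a\in G_{\mathcal{T}}\}$ recovering $\mathcal{T}$ up to biembeddability (hence up to $\mathcal{L}_{\infty\omega}$, after tracking heights). The first claim I would establish by the same inductive dovetailing as in Lemma~\ref{RedToFrames}: a tree embedding $f: \mathcal{T}^0\leq \mathcal{T}^1$ lifts to an $\Omega^p_{\omega\times\omega,\{0\}}$-embedding $f_*: \mathcal{T}^0\otimes\mathbb{Z}\leq\mathcal{T}^1\otimes\mathbb{Z}$ by extending linearly on basis elements, so a tree-level back-and-forth system of embeddings induces one at the level of groups.

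\textbf{Step 2 (building the trees).} Since $\kappa(\omega)$ does not exist, by characterization (D) of the preceding lemma, for every cardinal $\lambda$ we may fix a coloring $c_\lambda:[\lambda]^{<\omega}\to 2$ admitting no increasing $\omega$-sequence $(\alpha_n:n<\omega)$ with $c_\lambda(\alpha_0,\ldots,\alpha_{n-1})=c_\lambda(\alpha_1,\ldots,\alpha_n)$ for all $n$. Take $\lambda$ much larger than $\alpha_*$ (e.g.\ $\lambda=\beth_{\alpha_*\cdot\omega}^+$). From $c_\lambda$ and $\alpha_*$ I would build a pair $\mathcal{T}^0,\mathcal{T}^1$ of colored trees sharing a common ``bulk'' — the tree of finite strictly increasing tuples from $\lambda$, colored by $c_\lambda$ — but differing in a small distinguished ``marker'' subtree glued at the root: a well-founded subtree of ordinal rank $\alpha_*$ in $\mathcal{T}^0$ and of rank $\alpha_*+1$ in $\mathcal{T}^1$, with colors disjoint from those used by the bulk.

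\textbf{Step 3 (verification).} That $\mathcal{T}^0\not\equiv_{\infty\omega}\mathcal{T}^1$ is immediate: the exact foundation rank of the marker subtree is $\mathcal{L}_{\infty\omega}$-definable. The hard direction is $\mathcal{T}^0\sim_{\alpha_*}\mathcal{T}^1$, which I would prove by induction on $\alpha\leq\alpha_*$. The bulk part of the trees is identical and admits abundant partial embeddings (since embeddings of colored trees into a coloring like $c_\lambda$ are easy to extend by finite Ramsey-type arguments, which remain valid at every finite stage regardless of $\neg\kappa(\omega)$), while the marker subtrees of ranks $\alpha_*$ and $\alpha_*+1$ are mutually biembeddable in such a way that the back-and-forth game on them survives exactly $\alpha_*$ rounds. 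Combining the two ingredients at each step yields the inductive extension property defining $\sim_{\alpha+1}$.

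\textbf{Main obstacle.} The subtle point is the precise design of the marker and the precise inductive argument in Step~3. The difficulty is that $\sim_\alpha$ is defined starting from biembeddability, not equality, so at each successor step one must produce, for every newly added element, a matching partner whose ``future game tree'' continues to satisfy the weaker $\sim_\alpha$ relation. I expect the main work to be a careful calibration: the marker's rank difference must be exactly one (so the trees are $\mathcal{L}_{\infty\omega}$-inequivalent) yet the coloring $c_\lambda$ must supply enough finite-depth flexibility to mask the difference for $\alpha_*$ rounds. That this calibration breaks down exactly when $\alpha_*\geq \kappa(\omega)$ — matching the open question in the introduction — is the signature of the correct construction.
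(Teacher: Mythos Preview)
Your proposal has a genuine gap at Step~3 that cannot be repaired within the tree framework. Observe first that $\sim_{\alpha}$ always refines $\sim_0 = \sim$: by a short induction on $\alpha$, if $(M,\overline a)\sim_\alpha (N,\overline b)$ then $(M,\overline a)\sim (N,\overline b)$ (at successors pick any $a$, apply the inductive hypothesis to $(M,\overline a a)\sim_\gamma (N,\overline b b)$, and drop the last constant). So to get $\mathcal T^0\sim_{\alpha_*}\mathcal T^1$ you must first have $\mathcal T^0$ and $\mathcal T^1$ biembeddable. But your trees are not: since the marker colors are disjoint from the bulk, any embedding $\mathcal T^1\leq\mathcal T^0$ must send the rank-$(\alpha_*+1)$ marker into the rank-$\alpha_*$ marker, and tree embeddings (which preserve $\leq$ in both directions) cannot decrease foundation rank. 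Your sentence ``the marker subtrees of ranks $\alpha_*$ and $\alpha_*+1$ are mutually biembeddable'' is simply false; you seem to be conflating $\sim_\alpha$ with the ordinary Ehrenfeucht--Fra\"iss\'e relation $\equiv_{\alpha\omega}$, whose base case is quantifier-free agreement rather than biembeddability.

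The deeper reason the tree route fails is that colored-tree embeddings preserve colors \emph{exactly}, so any feature you use to distinguish $\mathcal T^0$ from $\mathcal T^1$ up to $\equiv_{\infty\omega}$ is also visible to~$\sim$. The paper exploits precisely the extra slack available at the group level: an $\Omega^p_{\I,\{0\}}$-embedding need only satisfy $f[G_i]\subseteq G'_i$, so an element of $X^{\overline G}=G\setminus\bigcup_i G_i$ can be sent \emph{outside} $X^{\overline G'}$. The paper works directly in $\Omega^p_{\I,\{0\}}$, building $\overline G^0_*,\overline G^1_*$ as unions of chains of approximations carrying an auxiliary rank function $\rho$ on $X^{\overline G}$; one side is seeded with a $\phi$-descending chain $(e^n)$ making $X^{\overline G^0_*}$ ill-founded, while $\rho$ keeps $X^{\overline G^1_*}$ well-founded. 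The back-and-forth is maintained by a family of partial ``$\alpha$-embeddings'' (embeddings that respect $\rho$ below level $\omega\alpha$), and the extension lemmas show that any partial $\alpha$-embedding can be extended over a new element at the cost of dropping to some $\beta<\alpha$ --- the drop being absorbed by freely adding new predicates $G_i$, $i\in\I'\setminus\I$, to push unwanted elements out of $X$. (A secondary issue with your Step~1: tree embeddings here need not be injective, but group embeddings must be, so the lift $f\mapsto f_*$ you describe would not in general produce an embedding in the required sense.)
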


Our idea is the following: given $\overline{G} = (G, G_i: i \in \I, \phi) \models \Omega_{\I, \{0\}}^p$, define $X^{\overline{G}} := G \backslash \bigcup_i G_i$ and define $\leq^{\overline{G}}$ to be the partial order of $X^{\overline{G}}$ given by: $a \leq^{\overline{G}} b$ if and only if $\phi^n(a) = b$ for some $n < \omega$, satisfying further that for all $m < n$, $\phi^m(a) \in X^{\overline{G}}$. Then we will arrange that $(X^{\overline{G}^0_*}, \leq^{\overline{G}^0_*})$  is ill-founded, but $(X^{\overline{G}^1_*}, \leq^{\overline{G}^1_*})$ is well-founded. It turns out we can make $\overline{G}^0_* \sim_{\alpha_*} \overline{G}^1_*$ without upsetting this.

We will be approximating $\overline{G}^0_*$ and $\overline{G}^1_*$ as a union of chains. To control the eventual behavior of $(X^{\overline{G}^i_*}, \leq^{\overline{G}^i_*})$, we will be defining upper bounds to the rank function at each stage. The following are the approximations we will be using:

\begin{definition}
Given an index set $\I$, let $\Gamma_{\I}$ denote all tuples $(\overline{G}, \mathcal{B}, \rho)$ where:

\begin{itemize}
\item $\overline{G} = (G, G_i, \phi: i \in \I) \models \Omega_{\I, \{0\}}^p$;
\item $G$ is free abelian (this is not redundant, since $\Omega_{\I, \{0\}}^p$ only asserts that $G \equiv_{\infty \omega} \oplus_\omega \mathbb{Z}$) and $\mathcal{B}$ is a basis of $G$;
\item $\phi: G \to G$;
\item $\rho: X^{\overline{G}} \to \mbox{ON} \cup \{\infty\}$ satisfies: for all $a, b \in X^{\overline{G}}$, if $\phi(b) = a$ and $\rho(b) < \infty$ then $\rho(a) < \rho(b)$. Hence $\rho(a) \geq \mbox{rnk}(a)$ where $\mbox{rnk}$ is the rank function for $(X^{\overline{G}}, \leq^{\overline{G}})$.
\item For all $a \in X$ and for all $n \in \mathbb{Z}$ nonzero, $\rho(a) = \rho(na)$.
\end{itemize}

When we write $\overline{G}, \overline{G}', \overline{G}^\ell$, etc., then we will always have $\overline{G} = (G, G_i, \phi: i \in I)$, $\overline{G}' = (G', G_i', \phi': i \in I), \overline{G}^\ell =  (G^\ell, G_i^{\ell}, \phi^\ell: i \in I),$ etc. 

\end{definition}
\begin{definition}
Suppose $\I, \I'$ are index sets with $\I \subseteq \I'$. Suppose $(\overline{G}, \mathcal{B}, \rho) \in \Gamma_{\I}$ and $(\overline{G}', \mathcal{B}', \rho') \in \Gamma_{\I'}$. Then say that $(\overline{G}', \mathcal{B}', \rho')$ extends $(\overline{G}, \mathcal{B}, \rho)$ if:

\begin{itemize}
\item $G \subseteq G'$ and $\mathcal{B} \subseteq \mathcal{B}'$;
\item For each $i \in \I$, $G'_i \cap G = G_i$;
\item For each $i \in \I' \backslash \I$, $G'_i \cap G = 0$;
\item $\phi' \restriction_{G_i} = \phi$;
\item $\rho' \restriction_{X^{\overline{G}}} = \rho$.
\end{itemize}

\end{definition}

The following lemma is immediate.

\begin{lemma}\label{UnionsOfChains}
Suppose $\delta < \lambda^+$ is a limit ordinal, $(\I_\gamma: \gamma < \delta)$ is an increasing chain of index sets, and $((\overline{G}^\gamma, \mathcal{B}^\gamma, \rho^\gamma): \gamma < \delta)$ is a sequence satisfying each $(\overline{G}^\gamma, \mathcal{B}^\gamma, \rho^\gamma) \in \Gamma_{\I_\gamma}$ and for $\gamma < \gamma'$, $(\overline{G}^{\gamma'}, \mathcal{B}^{\gamma'}, \rho^{\gamma'})$ extends $(\overline{G}^\gamma, \mathcal{B}^\gamma, \rho^\gamma)$. Then the natural union of the chain $(\overline{G}, \mathcal{B}, \rho)$ extends each $(\overline{G}^\gamma, \mathcal{B}^\gamma, \rho^\gamma)$. 
\end{lemma}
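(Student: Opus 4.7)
\emph{Proof plan for Lemma~\ref{UnionsOfChains}.} Since the author flags the lemma as immediate, the task is really one of bookkeeping: define the union componentwise and verify each clause in the definitions of $\Gamma_{\I}$ and of ``extends''. Let $\I = \bigcup_{\gamma < \delta} \I_\gamma$. Set $G = \bigcup_{\gamma < \delta} G^\gamma$, $\mathcal{B} = \bigcup_{\gamma < \delta} \mathcal{B}^\gamma$, $\phi = \bigcup_{\gamma < \delta} \phi^\gamma$, and $\rho = \bigcup_{\gamma < \delta} \rho^\gamma$. For each $i \in \I$, let $\gamma(i) = \min\{\gamma < \delta : i \in \I_\gamma\}$ and set $G_i = \bigcup_{\gamma(i) \leq \gamma < \delta} G_i^\gamma$. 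Put $\overline{G} = (G, G_i, \phi : i \in \I)$.

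Next I would verify $(\overline{G}, \mathcal{B}, \rho) \in \Gamma_{\I}$ clause by clause. Since each $\mathcal{B}^\gamma$ is a basis of $G^\gamma$ and the chain of inclusions $G^\gamma \hookrightarrow G^{\gamma'}$ sends basis elements to basis elements, $\mathcal{B}$ is a $\mathbb{Z}$-basis of $G$; in particular $G$ is free abelian of infinite rank, so $G \equiv_{\infty \omega} \oplus_\omega \mathbb{Z}$, verifying the $\Omega$-part. The map $\phi$ is a well-defined endomorphism because the $\phi^\gamma$'s agree on intersections. For each $i$, $G_i$ is a subgroup of $G$ as a directed union of subgroups; for $p$-purity, if $a \in G_i$ and $a = p^n b$ in $G$, then already $a, b \in G^\gamma$ for some $\gamma \geq \gamma(i)$, and $p$-purity of $G_i^\gamma$ in $G^\gamma$ gives $a \in p^n G_i^\gamma \subseteq p^n G_i$. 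For the rank map, if $a, b \in X^{\overline{G}}$ and $\phi(b) = a$ with $\rho(b) < \infty$, then both $a, b$ lie in some $X^{\overline{G}^\gamma}$ (using the extension clauses $G_i^{\gamma'} \cap G^\gamma = G_i^\gamma$, which guarantee that $X^{\overline{G}^\gamma} = X^{\overline{G}} \cap G^\gamma$), and the required inequality $\rho(a) < \rho(b)$ follows from the corresponding property of $\rho^\gamma$. The clause $\rho(a) = \rho(na)$ transfers similarly from one single $\gamma$.

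Finally, the extension clauses are essentially automatic: $G^\gamma \subseteq G$ and $\mathcal{B}^\gamma \subseteq \mathcal{B}$ by construction; $G_i \cap G^\gamma = G_i^\gamma$ for $i \in \I_\gamma$ follows by a telescoping application of the hypothesis $G_i^{\gamma''} \cap G^{\gamma'} = G_i^{\gamma'}$ up the chain, and similarly $G_i \cap G^\gamma = 0$ for $i \in \I \setminus \I_\gamma$; the conditions $\phi \restriction_{G^\gamma} = \phi^\gamma$ and $\rho \restriction_{X^{\overline{G}^\gamma}} = \rho^\gamma$ are direct from the definitions. The only place requiring any care is the identification $X^{\overline{G}^\gamma} = X^{\overline{G}} \cap G^\gamma$ used above, which is where the two kinds of intersection conditions (inside $\I_\gamma$ and outside $\I_\gamma$) both come into play; this is the one ``obstacle,'' but it is really just an observation.
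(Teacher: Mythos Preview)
Your proposal is correct and matches the paper's treatment: the paper gives no proof beyond the sentence ``The following lemma is immediate,'' and your careful componentwise verification is exactly the routine bookkeeping that sentence gestures at. The one point you flag as needing care---the identity $X^{\overline{G}^\gamma} = X^{\overline{G}} \cap G^\gamma$, relying on both intersection clauses in the definition of ``extends''---is indeed the only place where anything happens, and you handle it correctly.
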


The final set of definitions describe the embeddings we will use to arrange $\overline{G}^0_* \sim_{\alpha_*} \overline{G}^1_*$.

\begin{definition}
If $(\overline{G}, \mathcal{B}, \rho) \in \Gamma_{\I}$, then say that $H$ is a basic subgroup of $G$ if $H$ is spanned by $H \cap \mathcal{B}$. By $\overline{G} \restriction_H$ we mean $(H, G_i \cap H, \phi \restriction_H: i \in I) \models \Omega_{\I, \{0\}}^p$. By $(\overline{G}, \mathcal{B}, \rho) \restriction_H$ we mean $(\overline{G} \restriction_H, \mathcal{B} \cap H, \rho \restriction_{X^{\overline{G} \restriction_H}})$.

Suppose $(\overline{G}, \mathcal{B}, \rho), (\overline{G}', \mathcal{B}', \rho') \in \Gamma_{\I}$. Then by a $-1$-embedding from $(\overline{G}, \mathcal{B}, \rho)$ into $(\overline{G}', \mathcal{B}', \rho')$, we mean a map $f$ where  $f: \overline{G} \leq \overline{G}'$ is an embedding and $f[\mathcal{B}] \subseteq \mathcal{B}'$. For an ordinal $\alpha \geq 0$, say that $f$ is an $\alpha$-embedding if additionally: $f[X^{\overline{G}}] \subseteq X^{\overline{G}'}$, and for all $a \in X^{\overline{G}}$, if $\rho(a) < \omega \cdot \alpha$, then $\rho(a) = \rho'(f(a))$.

For all $\alpha \geq -1$, say that $f$ is a partial $\alpha$-embeddding from $(\overline{G}, \mathcal{B}, \rho)$ into $(\overline{G}', \mathcal{B}', \rho')$ if for some basic subgroup $D$ of $\overline{G}$, $f$ is an an $\alpha$-embedding from $(\overline{G}, \mathcal{B}, \rho) \restriction_D$ to $(\overline{G}', \mathcal{B}', \rho')$.
\end{definition}

Finally, we describe the construction of $\overline{G}^0_*, \overline{G}^1_*$. We will build them as a union of chains. In the outer layer, we will construct, by induction on $n < \omega$, index sets $\I^n$, and, for each $\ell < 2$, $(\overline{G}^{\ell, n}, \mathcal{B}^{\ell, n}, \rho^{\ell, n}) \in \Gamma_{\I^n}$ with a privileged element $e^n \in G^{0, n}$ for $n > 0$, and for each $\ell < 2$ a set $\mathcal{F}^{\ell, n}$, satisfying various constraints. The goal is that $(e^n: n < \omega)$ will witness that $X^{\overline{G}^{0, n}}$ is ill-founded, and $\mathcal{F}^{\ell, n}$ will be a set of partial embeddings from $\overline{G}^{\ell, n}$ to $\overline{G}^{1-\ell, n}$, which will be used to arrange that $\overline{G}^0_* \sim_{\alpha_*} \overline{G}^1_*$. Formally, we need the following requirements:

\begin{enumerate}
\item For $n < m < \omega$, $(\overline{G}^{\ell, m}, \mathcal{B}^{\ell, m}, \rho^{\ell, m})$ extends $(\overline{G}^{\ell, n}, \mathcal{B}^{\ell, n}, \rho^{\ell, n})$; 
\item For each $n > 0$, $e^n \in X^{\overline{G}^{0, n}}$, and $\phi^{0, n+1}(e_{n+1}) = e_n$ (so necessarily each $\rho^{0, n}(e_n) = \infty$).
\item For all $a \in X^{\overline{G}^{1, n}}$, $\rho^{1, n}(a) < \infty$.
\item For all $n, \ell$, $(\phi^{\ell, n})^n = 0$ (i.e. $\phi^{\ell, n}$ iterated $n$-many times is $0$);
\item Each $\mathcal{F}^{\ell, n}$ is a set of tuples $(\alpha, D, R, f)$, where $-1 \leq \alpha \leq \alpha_*$, and $f$ is a partial $\alpha$-embedding from $(\overline{G}^{\ell, n}, \mathcal{B}^{\ell, n} , \rho^{\ell, n})$ to $(\overline{G}^{1-\ell, n}, \mathcal{B}^{1-\ell, n} , \rho^{1-\ell, n})$ with domain $D$ and range $R$;
\item For each $n < m$, and for each $\ell < 2$, $\mathcal{F}^{\ell, n} \subseteq \mathcal{F}^{\ell, m}$;
\item If $(\alpha, D, R, f) \in \mathcal{F}^{\ell, n}$ and $\alpha \geq 0$, then $(\alpha, R, D, f^{-1}) \in \mathcal{F}^{1-\ell, n}$ (in particular $f^{-1}$ is a partial $\alpha$-embedding);
\item Suppose $(\alpha, D, R, f) \in \mathcal{F}^{\ell, n}$, and suppose either $\beta < \alpha$ or else $\beta = -1$. Then for every $a \in G^{\ell, n+1}$, there is some $D' \supseteq D \cup \{a\}$, $R' \supseteq R$, and $f' \supseteq f$ such that $(\beta, D', R', f') \in \mathcal{F}^{\ell, n+1}$;
\item $G^{0, 0} = G^{1, 0} = 0$ (this determines each $(\overline{G}^{\ell, 0}, \mathcal{B}^{\ell, 0}, \rho^{\ell,0})$), and $(\alpha_*, 0, 0, 0) \in \mathcal{F}^{0, 0}$.

\end{enumerate}

Having done this, let $(\overline{G}^\ell_*, \mathcal{B}^\ell_*, \rho^\ell_*)$ be the union of the chain $((\overline{G}^{\ell, m}, \mathcal{B}^{\ell, m}, \rho^{\ell, m}): m < \omega)$, as promised by Lemma~\ref{UnionsOfChains}. Then $\overline{G}^{0}_* \not \equiv_{\infty \omega} \overline{G}^1_*$, since $(X^{\overline{G}^0_*}, \leq^{\overline{G}^0_*})$ is ill-founded (by condition (2)) while $(X^{\overline{G}^1_*}, \leq^{\overline{G}^1_*})$ is well-founded (by condition (3). On the other hand, it is clear that for all $n< \omega$, for all $(\alpha, D, R, f) \in \mathcal{F}^{\ell, n}$ with $\alpha \geq 0$, and for all finite tuples $\overline{a} \in D$, we have $(\overline{G}^{\ell}_*, \overline{a}) \sim_\alpha (\overline{G}^{1-\ell}_*, f(\overline{a}))$ (by condition (8)). Thus $\overline{G}^{0}_* \sim_{\alpha_*} \overline{G}^1_*$.

So it remains to show this construction is possible. This will mostly be achieved by the following two lemmas, which will allow us to handle the key condition (8) without disturbing any of the other hypotheses:

\begin{lemma}\label{ExtendLemma1}
Suppose $(\overline{G}^{\ell}, \mathcal{B}^{\ell}, \rho^{\ell}) \in \Gamma_{\I}$ for each $\ell < 2$. Suppose $f$ is a partial $ -1$-embedding from $(\overline{G}^0, \mathcal{B}^0, \rho^0)$ to $(\overline{G}^1, \mathcal{B}^1, \rho^1)$. Finally, suppose each $(\phi^i)^{n+1} = 0$. Then we can find an index set $\I'$, and an extension $(\overline{G}^{2}, \mathcal{B}^{2}, \rho^{2})$ of $(\overline{G}^{1}, \mathcal{B}^{1}, \rho^{1})$ in $\Gamma_{\mathcal{I}'}$, such that $X^{\overline{G}_2} = X^{\overline{G}_1}$, and $f$ extends to a $-1$-embedding $h$ from $(\overline{G}^0, \mathcal{B}^0, \rho^0)$ to $(\overline{G}^2, \mathcal{B}^2, \rho^2)$, and finally $(\phi^2)^{n+1} = 0$.
\end{lemma}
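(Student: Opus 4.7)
The plan is as follows. Let $D \subseteq G^0$ denote the basic subgroup on which the partial embedding $f$ is defined; since $D$ is spanned by $\mathcal{B}^0 \cap D$, we have a direct-sum decomposition $G^0 = D \oplus D'$ where $D' := \langle \mathcal{B}^0 \setminus D \rangle$. Define $G^2 := G^1 \oplus D'$ (a pushout along $f$: the new summand $D'$ sits above $G^1$, while $D$ is identified with $f(D) \subseteq G^1$), take $\mathcal{B}^2 := \mathcal{B}^1 \cup (\mathcal{B}^0 \setminus D)$, and define $h: G^0 \to G^2$ by $h(d + d') = f(d) + d'$. Then $h$ is an injective homomorphism extending $f$ with $h[\mathcal{B}^0] \subseteq \mathcal{B}^2$.

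Next I would specify $\phi^2 \colon G^2 \to G^2$ on $\mathcal{B}^2$ by $\phi^2(b) = \phi^1(b)$ for $b \in \mathcal{B}^1$ and $\phi^2(b) = h(\phi^0(b))$ for $b \in \mathcal{B}^0 \setminus D$, extending linearly. Then $\phi^2 \restriction_{G^1} = \phi^1$ by fiat. The key identity $\phi^2 \circ h = h \circ \phi^0$ reduces by linearity to the basis of $G^0$: for $b \in \mathcal{B}^0 \cap D$ it follows from $\phi^0(D) \subseteq D$ together with the fact that $f$ is an embedding on $\overline{G}^0 \restriction_D$, and for $b \in \mathcal{B}^0 \setminus D$ it is the definition. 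A routine induction on $k$ then gives $(\phi^2)^k(b) = (\phi^1)^k(b)$ for $b \in \mathcal{B}^1$ and $(\phi^2)^k(b) = h((\phi^0)^k(b))$ for $b \in \mathcal{B}^0 \setminus D$; with $k = n+1$ and the hypothesis $(\phi^0)^{n+1} = (\phi^1)^{n+1} = 0$, this kills $\phi^2$ on $\mathcal{B}^2$, and hence everywhere.

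The main technical step is defining the index set $\I'$ and the subgroups $G^2_{i'}$ for $i' \in \I'$ so as to force $X^{\overline{G}^2} = X^{\overline{G}^1}$ while respecting the extension and purity constraints. I put $\I' := \I \sqcup \{i_x : x \in G^2 \setminus G^1\}$. For $i \in \I$ let $G^2_i$ be the pure closure in $G^2$ of $G^1_i + h[G^0_i]$; this subgroup contains $h[G^0_i]$ (so $h$ preserves $G_i$), and the decomposition $G^2 = G^1 \oplus D'$ yields $(G^1_i + h[G^0_i]) \cap G^1 = G^1_i$ (an element $g + h(a)$ lies in $G^1$ only when $a \in D$, and then $f[G^0_i \cap D] \subseteq G^1_i$), which propagates to the pure closure using purity of $G^1_i$ in $G^1$. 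For each new index $i_x$, let $G^2_{i_x}$ be the pure closure of $\mathbb{Z} x$; writing $x = y + d'$ with $d' \in D' \setminus \{0\}$, the projection of $\mathbb{Z} x$ to $D'$ is injective, so $\mathbb{Z} x \cap G^1 = 0$, and this passes to the pure closure.

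Since every $x \in G^2 \setminus G^1$ lies in $G^2_{i_x}$, no element outside $G^1$ can survive in $X^{\overline{G}^2}$; combined with $G^2_i \cap G^1 = G^1_i$ this gives $X^{\overline{G}^2} = X^{\overline{G}^1}$, and I take $\rho^2 := \rho^1$. The remaining conditions (membership in $\Gamma_{\I'}$, extension of $(\overline{G}^1,\mathcal{B}^1,\rho^1)$, $h$ being a $-1$-embedding) are then routine. The main obstacle is the condition $X^{\overline{G}^2} = X^{\overline{G}^1}$: one must simultaneously enlarge the $G^2_i$'s to absorb all of $G^2 \setminus G^1$, yet keep each new $G^2_{i'}$ disjoint from $G^1$ and preserve the exact intersection $G^2_i \cap G^1 = G^1_i$ for old $i$. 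Using one fresh pure cyclic subgroup per new element dodges the conflict, because any cyclic generated by an element with a nontrivial $D'$-component automatically misses $G^1$.
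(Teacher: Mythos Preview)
Your argument is correct and follows the paper's approach almost exactly: build $G^2$ as $G^1$ together with a free complement for $\mathcal{B}^0\setminus D$, define $h$ by $f$ on $D$ and the obvious identification on the complement, transport $\phi^0$ across via $h$, and add enough new indices so that every element outside $G^1$ lies in some $G^2_{i'}$, forcing $X^{\overline{G}^2}=X^{\overline{G}^1}$ and $\rho^2=\rho^1$.

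The one substantive difference is your treatment of $G^2_i$ for $i\in\I$. The paper simply sets $G^2_i=G^1_i$, whereas you take the pure closure of $G^1_i+h[G^0_i]$ and then verify $G^2_i\cap G^1=G^1_i$ using purity of $G^1_i$ in $G^1$. Your version is the more careful one: since $h$ is required to be a (total) $-1$-embedding of $\overline{G}^0$ into $\overline{G}^2$, it must send $G^0_i$ into $G^2_i$, and this is not guaranteed by $G^2_i=G^1_i$ when $G^0_i\not\subseteq D$. Your enlargement secures this while your intersection calculation preserves $X^{\overline{G}^2}=X^{\overline{G}^1}$. Likewise, your definition of $\phi^2$ on the new basis elements via $h(\phi^0(b))$ rather than the paper's $g\circ\phi^0\circ g^{-1}$ avoids the issue that $\phi^0(b)$ need not lie in $\mathrm{span}(\mathcal{A})$; your formulation makes the intertwining $\phi^2\circ h=h\circ\phi^0$ transparent. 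In short, same construction, but you have patched two points the paper leaves implicit.
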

\begin{proof}
Let $D$ be the domain of $f$ and let $R$ be its range. Recall that we require $D$ and $R$ to be basic subgroup of $G$, that is, $\mathcal{B} \cap D$ spans $D$. Let $\I' \supseteq \I$ be large enough.

Write $\mathcal{A} = \mathcal{B}^0 \backslash (\mathcal{B} \cap D) $. Let $G^2 = G^1 \times \oplus_{\mathcal{A}} \mathbb{Z}$. Write $H = 0 \times \oplus_{\mathcal{A}} \mathbb{Z}$, and let $g: \mbox{span}_{G^0}(\mathcal{A}) \cong H$ be the natural isomorphism. Let $\mathcal{B}^2$ be $\mathcal{B}^1 \cup g[\mathcal{A}]$.  Define $h: G^0 \to G^2$ via $h \restriction_{D} = f$ and $h \restriction_{\mbox{span}(\mathcal{A})}= g$. 

Define $\phi^2: G^2 \to G^2$ via: $\phi^2 \restriction_{G^1} = \phi^1$, and $\phi^2 \restriction_{H} = g \circ \phi^0 \circ g^{-1}$. For each $i \in \I$, let $G^2_i = G^1_i$.

Let $G^2_i: i \in \I' \backslash \I$ enumerate all singly generated pure subgroups of $G^2$ which are not contained in $G^1$. Note then that $X^{\overline{G}^2} = X^{\overline{G}^1}$ so we must let $\rho^2 = \rho^1$ and then clearly we are done.

\end{proof}

\begin{lemma}\label{ExtendLemma2}
Suppose $(\overline{G}^{\ell}, \mathcal{B}^{\ell}, \rho^{\ell}) \in \Gamma_{\I}$ for each $\ell < 2$. Suppose $0 \leq \beta < \alpha$, and $f$ is a partial $\alpha$-embedding from $(\overline{G}^0, \mathcal{B}^0, \rho^0)$ to $(\overline{G}^1, \mathcal{B}^1, \rho^1)$ such that $f^{-1}$ is also a partial $\alpha$-embedding. Finally, suppose each $(\phi^i)^{n+1} = 0$. Then we can find an index set $\I'$, and an extension $(\overline{G}^{2}, \mathcal{B}^{2}, \rho^{2})$ of $(\overline{G}^{1}, \mathcal{B}^{1}, \rho^{1})$ in $\Gamma_{\mathcal{I}'}$, such that:

\begin{itemize}
\item $f$ extends to an $\beta$-embedding $h$ from $(\overline{G}^0, \mathcal{B}^0, \rho^0)$ to $(\overline{G}^2, \mathcal{B}^2, \rho^2)$;
\item $h^{-1}$ is a partial $\beta$-embedding from $(\overline{G}^2, \mathcal{B}^2, \rho^2)$ to $(\overline{G}^0, \mathcal{B}^0, \rho^0)$;
 \item For all $a \in X^{\overline{G}^2} \backslash X^{\overline{G}^1}$, $\rho^2(a) < \omega \cdot \alpha$;
 \item $(\phi^2)^{n+1} = 0$.
 \end{itemize}
\end{lemma}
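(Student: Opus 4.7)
The plan is to extend $\overline{G}^1$ by glueing onto it an isomorphic copy of the part of $\overline{G}^0$ outside $D = \mathrm{dom}(f)$, along the partial embedding $f$. Write $R = f[D]$, $\mathcal{A} = \mathcal{B}^0 \setminus D$, and $D' = \mathrm{span}(\mathcal{A})$, so that basicness of $D$ gives $G^0 = D \oplus D'$. Then set $G^2 = G^1 \oplus D'$ with basis $\mathcal{B}^2 = \mathcal{B}^1 \cup \mathcal{A}$, and define $h(d+d') = f(d) + d'$ for $d \in D$, $d' \in D'$. Transport $\phi^0$ across via $h$, putting $\phi^2 = \phi^1$ on $G^1$ and $\phi^2(e) = h(\phi^0(e))$ for $e \in \mathcal{A}$ and extending linearly; using $\phi^0[D] \subseteq D$ and that $f$ commutes with $\phi$ one checks $\phi^2 \circ h = h \circ \phi^0$, and since $h$ is injective, iterating with $(\phi^0)^{n+1} = 0 = (\phi^1)^{n+1}$ yields $(\phi^2)^{n+1} = 0$.

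For each $i \in \I$, take $G^2_i$ to be the $p$-pure closure of $G^1_i + h[G^0_i]$ in $G^2$. The hypothesis that \emph{both} $f$ and $f^{-1}$ are partial $\alpha$-embeddings (with $\alpha \geq 0$) forces $G^1_i \cap R = f[G^0_i \cap D]$, from which a direct calculation gives $G^2_i \cap G^1 = G^1_i$ and $G^2_i \cap h[G^0] = h[G^0_i]$. Consequently $h$ maps $X^{\overline{G}^0}$ into $X^{\overline{G}^2}$, and $h^{-1}$ on $h[G^0]$ preserves subgroup membership in both directions.

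Next I would eliminate the ``mixed'' elements $a = c + e$ with $c \in G^1 \setminus R$ and $e \in D' \setminus \{0\}$ from $X^{\overline{G}^2}$, since their $\phi^2$-successors can land in $X^{\overline{G}^1}$ with $\rho^1$-values too large to dominate while staying below $\omega \cdot \alpha$. To kill them, enlarge $\I$ to $\I'$ by adjoining, for each triple $(r, v, e)$ with $r \in R$, $v$ a nonzero element of a fixed $\mathbb{Z}$-complement $C$ of $R$ in $G^1$, and $e \in D' \setminus \{0\}$, a new index carrying the pure cyclic subgroup $\langle r + v + e \rangle^{*}$. Each such subgroup meets $G^1$ and $h[G^0]$ only in $0$, so none of the previously-established structure is disturbed, and collectively they absorb every mixed element. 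This gives $X^{\overline{G}^2} = X^{\overline{G}^1} \sqcup h[X^{\overline{G}^0} \setminus D]$.

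Finally, define $\rho^2 = \rho^1$ on $X^{\overline{G}^1}$, and for $a' \in X^{\overline{G}^0} \setminus D$ set $\rho^2(h(a')) = \rho^0(a')$ when $\rho^0(a') < \omega \cdot \beta$ (so that $h$ is a genuine $\beta$-embedding there); otherwise assign a value in $[\omega \cdot \beta, \omega \cdot \alpha)$ by recursion along the $\phi^0$-chain starting at $a'$, which is finite because $(\phi^0)^{n+1}=0$, picking $\rho^2(h(a'))$ strictly above every $\rho^2$-value already assigned to a $\phi^0$-successor in $X^{\overline{G}^2} \setminus X^{\overline{G}^1}$ and above $\rho^1(f((\phi^0)^k(a')))$ whenever the chain first re-enters $D$. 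The main obstacle is controlling this recursion so the result remains below $\omega \cdot \alpha$: the bi-$\alpha$-embedding hypothesis identifies $\rho^0$ and $\rho^1$ at each re-entry point (whenever one side is $< \omega \cdot \alpha$ so is the other, and they agree), while the nilpotency bound makes the supremum finite-indexed, so a careful analysis — using that $\omega \cdot \alpha$ is either a limit or has the form $\omega \cdot (\beta'+1)$ with room to spare — keeps the result inside $[\omega \cdot \beta, \omega \cdot \alpha)$. Scalar-invariance $\rho^2(na) = \rho^2(a)$ then follows directly from the corresponding invariance of $\rho^0$ and $\rho^1$.
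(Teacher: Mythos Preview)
Your overall construction matches the paper's: adjoin to $G^1$ a free copy of $\mathrm{span}(\mathcal{A})$, set $h = f$ on $D$ and the natural identification on the new part, transport $\phi^0$ via $h$, and introduce new indices so that $X^{\overline{G}^2}$ reduces to $X^{\overline{G}^1} \cup h[X^{\overline{G}^0}]$. Two cosmetic differences: the paper keeps $G^2_i = G^1_i$ for $i \in \I$ rather than enlarging by $h[G^0_i]$, and for the new indices it simply enumerates \emph{all} singly-generated pure subgroups of $G^2$ contained neither in $G^1$ nor in $h[G^0] = R + D'$, rather than your hand-built list of mixed cyclic subgroups.

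The real gap is your definition of $\rho^2$. Your recursion walks the \emph{forward} $\phi^0$-chain from $a'$ and tries to place $\rho^2(h(a'))$ strictly above the $\rho^1$-value at the re-entry point, invoking ``a careful analysis'' to stay below $\omega\alpha$. But nothing bounds that re-entry value: it can be $\geq \omega\alpha$, or $\infty$, and the bi-$\alpha$-embedding hypothesis only synchronizes $\rho^0$ and $\rho^1$ \emph{below} $\omega\alpha$, so it gives you no leverage in exactly the bad case. The paper avoids this by never referring to the re-entry value at all. It sets $\rho^2(h(a')) = \omega\beta + k$, where $k$ is the length of the longest chain of $\phi^0$-\emph{preimages} $c', \phi^0(c'), \ldots, (\phi^0)^k(c') = a'$ lying in $X^{\overline{G}^0}$ with $\rho^0(c') \geq \omega\beta$. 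Nilpotency forces $k \leq n$, so the value lands in $[\omega\beta, \omega\beta+n] \subset [\omega\beta, \omega\alpha)$ automatically, independent of anything on the $G^1$ side. The rank inequality at the boundary into $X^{\overline{G}^1}$ is then checked by a direct case split: if $\rho^0$ and $\rho^1$ agree at the crossing point one uses that $\rho^0$ already bounds $\rho^2$ from above (this is the paper's Claim), and if they disagree then both are $\geq \omega\alpha > \omega\beta + n$, which settles it trivially.
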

\begin{proof}
Let $D$ be the domain of $f$ and let $R$ be its range. Let $\I' \supseteq \I$ be large enough.

Write $\mathcal{B}^0 = (\mathcal{B} \cap D) \cup \mathcal{A}$. Let $G^2 = G^1 \times \oplus_{\mathcal{A}} \mathbb{Z}$. Write $H = 0 \times \oplus_{\mathcal{A}} \mathbb{Z}$, and let $g: \mbox{span}_{G^0}(\mathcal{A}) \cong H$ be the natural isomorphism. Let $\mathcal{B}^2$ be $\mathcal{B}^1 \cup g[\mathcal{A}]$.  Define $h: G^0 \to G^2$ via $h \restriction_{D} = f$ and $h \restriction_{\mbox{span}(\mathcal{A})}= g$. 

Define $\phi^2: G^2 \to G^2$ via: $\phi^2 \restriction_{G^1} = \phi^1$, and $\phi^2 \restriction_{H} = g \circ \phi^0 \circ g^{-1}$. For each $i \in \I$, let $G^2_i = G^1_i$. It remains to define $G^2_i$ for $i \in \I' \backslash \I$, and then to define $\rho^2$.

Let $G^2_i: i \in \I' \backslash \I$ enumerate all singly generated pure subgroups of $G^2$ which are not contained in $G^1$ and which are not contained in $R + H$. Note then that $X^{\overline{G}^2} = X^{\overline{G}^1} \cup h[X^{\overline{G}^0}]$. We define $\rho^2$ as follows: suppose $a \in X^{\overline{G}^2}$. If $a \in X^{\overline{G}^1}$ then we must let $\rho^2(a) = \rho^1(a)$. Suppose instead $a \in h[X^{\overline{G}^0}] \backslash X^{\overline{G}^1}$; write $a = h(a')$. If $\rho^0(a) < \omega \cdot \beta$ then let $\rho^2(a) = \rho^0(a)$. Otherwise, let $k$ be largest such that there is $c' \in X^{\overline{G}^0}$ such that $(\phi^0)^k(c') = a'$, and for all $k' < k$, $(\phi^0)^{k'}(c') \in X^{\overline{G}^0}$, and finally $\rho^0(c') \geq \omega \cdot \beta$; let $\rho^2(a) = \omega \cdot \beta + k$. Note that $k \leq n$ since $(\phi_0)^{n+1} = 0$.

Now I claim this works. First of all:

\vspace{1 mm}

\noindent \textbf{Claim.} Suppose $a \in h[X^{\overline{G}^0}] \backslash X^{\overline{G}^1}$; write $h(a') =a$. Then $\rho^0(a') \geq \rho^2(a)$. 

\begin{proof}This is immediate if $\rho^0(a') < \omega \cdot \beta$, so suppose instead $\rho^0(a') \geq \omega \cdot \beta$; let $c', k$ be as in the definition of $\rho^2(a)$. Then $\rho^0(a') = \rho^0((\phi_0)^k(c')) \geq \rho^0(c') + k \geq \omega \cdot \beta +k = \rho^2(a)$. 
\end{proof}

We show $(\overline{G}^2, \mathcal{B}^2, \rho^2) \in \Gamma_{\I'}$. We must check that for all $a, b \in X^{\overline{G}^2}$ with $\phi^2(b) = a$ and with $\rho^2(a) < \infty$, we have that $\rho^2(b) < \rho^2(a)$. If $b \in X^{\overline{G}^1}$, then $a \in X^{\overline{G}^1}$ and this is clear. Suppose $b \in h[X^{\overline{G}^0}] \backslash X^{\overline{G}^1}$, and $a \in X^{\overline{G}^1}$;  note that $a \in f[X^{\overline{G}^0}] \subseteq h[X^{\overline{G}^0}]$; write $a = f(a')$ and write $b = h(b')$. We consider two further subcases. If $\rho^0(a') = \rho^1(a)$, then $\rho^2(a) = \rho^0(a') > \rho^0(b') \geq \rho^2(b)$, using the claim. If $\rho^0(a') \not= \rho^1(a)$, then since $f, f^{-1}$ are both  $\alpha$-embeddings we must have $\rho^0(a'), \rho^1(a) \geq \omega \cdot \alpha$. Hence $\rho^2(a) = \rho^1(a) \geq \omega \cdot \alpha > \omega \cdot \beta + n \geq \rho^2(b)$. Finally, suppose both $a, b \in h[X^{\overline{G}^0}] \backslash X^{\overline{G}^1}$. Write $a = h(a')$, $b = h(b')$. If $\rho^0(a') < \omega \cdot \beta$ then $\rho^2(a) = \rho^0(a') > \rho^0(b') \geq \rho^2(b)$. If $\rho^0(a') \geq \omega \cdot \beta$ and $\rho^0(b') < \omega \cdot \beta$, then $\rho^2(a) \geq \omega \cdot \beta > \rho^0(b') = \rho^2(b)$. Finally, if $\rho^0(a')$ and $\rho^0(b')$ are both $\geq \omega \cdot \beta$, then let $k$ be as in the definition of $\rho^2(b)$, i.e. so that $\rho^2(b) = \omega \cdot \beta + k$; clearly then $\rho^2(a) \geq \omega \cdot \beta + (k+1)$.

To finish, it is clear that for all $a' \in \overline{G}^0$, if either $\rho^0(a') < \omega \cdot \beta$ or else $\rho^2(h(a')) < \omega \cdot \beta$, then $\rho^0(a') = \rho^2(h(a'))$; hence $h$ is a $\beta$-embedding and $h^{-1}$ is a partial $\beta$-embedding.
\end{proof}

Now, suppose we are given $(\overline{G}^{\ell, n}, \mathcal{B}^{\ell, n}, \rho^{\ell, n}),$ $\mathcal{F}^{\ell, n}$, and $e^n$ satisfying (1) through (9). We explain how to get $(\overline{G}^{\ell, n+1}, \mathcal{B}^{\ell, n+1}, \rho^{\ell, n+1}),$ $\mathcal{F}^{\ell, n+1}$, and $e^{n+1}$.

Define $G^0 = G^{0, n} \times \mathbb{Z}$, let $e^{n+1} = (0, 1) \in G^0$. Let $\I \supseteq \I^n$ be sufficiently large. For each $i \in \I^{n}$ let $G_i^0 = G_i^{0, n}$. Choose $(G_i^0: i \in \I \backslash \I^n)$ so as to enumerate the singly-generated pure subgroups of $G^0$ which are not contained in $G^{0, n}$ and which do not contain $e^{n+1}$. Define $\phi^{0}$ via $\phi^{0} \restriction_{G^{0, n}} = \phi^{0, n}$ and $\phi^{0}(e^{n+1}) = e^n$ (or, if $n = 0$ then let $\phi^0(e_1) = 0$). We have defined $\overline{G}^{0} \models \Omega_{\I, \{0\}}^p$, an extension of $\overline{G}^{0, n}$. Note that $X^{\overline{G}^{0}} = X^{\overline{G}^{n, 0}} \cup \{m e^{n+1}: m \in \mathbb{Z}, m \not= 0\}$. Let $\mathcal{B}^{0} = \mathcal{B}^{0, n} \cup \{e^{n+1}\}$, and define each $\rho^{0}(m e^{n+1}) = \infty$.

Define $G^1 = G^{1, n}$; for each $i \in \I^n$, let $G^1_i = G^1$, and for each $i \in \I \backslash \I^n$, and let $G^1_i = 0$; let $\phi^1 = \phi^{1, n}$. Finally, let $\mathcal{F}^{\ell} = \mathcal{F}^{\ell, n}$ for each $\ell < 2$.

The only thing left to do is arrange (8) to hold. For this, apply Lemmas~\ref{ExtendLemma1} and \ref{ExtendLemma2} repeatedly, using Lemma~\ref{UnionsOfChains} at limit stages.

This concludes the proof of Theorem \ref{SBFailureTheorem}, and hence of Theorem \ref{SBFailureThm}.

\end{document}